\newtheorem{theorem}{Theorem} [section]
\newtheorem{lemma}[theorem]{Lemma}
\newtheorem{proposition}[theorem]{Proposition}
\newtheorem{remark}[theorem]{Remark}
\newtheorem{condition}[theorem]{Condition}
\newtheorem{example}[theorem]{Example}
\newtheorem{definition}[theorem]{Definition}
\newtheorem{corollary}[theorem]{Corollary}
\newcommand{\noi}{\noindent}
\newcommand{\R}{\mathbb{R}}
\newcommand{\C}{\mathbb{C}}
\newcommand{\T}{\mathbb{T}}
\newcommand{\bul}{\bullet}
 \newcommand{\bfB}{\mathbf{B}}
\newcommand{\E}{\mathbb{E}}
\newcommand{\NN}{\mathcal{N}}
\newcommand{\cC}{\mathcal{C}}
\newcommand{\fH}{\mathfrak{H}}
\newcommand{\Cov}{\textup{Cov}}
\newcommand{\F}{\mathcal{F}}
\newcommand{\al}{\alpha}
\newcommand{\be}{\beta}
\newcommand{\dl}{\delta}
\newcommand{\eps}{\varepsilon}
\newcommand{\g}{\gamma}
\newcommand{\s}{\sigma}
\renewcommand{\o}{\omega}
\renewcommand{\O}{\Omega}
\newcommand{\les}{\lesssim}
\newcommand{\ges}{\gtrsim}
\newcommand{\jb}[1]
{\langle #1 \rangle}
\newcommand{\ind}{\mathbf 1}
\newcommand{\PP}{\mathbb{P}}
\newcommand{\M}{\mathcal{M}}
\def\e{\varepsilon}
\newcommand{\N}{\mathbb{N}}
\renewcommand{\H}{\mathcal{H}}
\newcommand{\ASCLT}{\textup{\textsf{ASCLT}}}
\newcommand{\Vol}{\textup{Vol}}
\newcommand{\Var}{\textup{Var}}
\numberwithin{equation}{section}
\numberwithin{theorem}{section}
\begin{document}
\baselineskip = 14pt

\title[ASCLTs via chaos expansion and related results]
{Almost sure Central limit theorems \\
via chaos expansions  and related results}

 \author[L. Maini, M. Rossi,  and 
 G. Zheng]{Leonardo Maini, Maurizia Rossi, 
 Guangqu Zheng}

\address{
}

\email{}

\address{
Leonardo Maini,
Dipartimento di Matematica\\
Universit\` di Roma Tor Vergata\\
Via della Ricerca Scientifica, 1, 00133 Roma, Italy
}

\email{maini@mat.uniroma2.it}

\address{
Maurizia Rossi,
Dipartimento di Matematica e Applicazioni\\
Universit\`a di Milano-Bicocca\\
55, Via Roberto Cozzi, 20125 Milano (MI), Italy
}

\email{maurizia.rossi@unimib.it}

\address{
Guangqu Zheng, 
 Department of Mathematics and Statistics\\
Boston University\\
665, Commonwealth Avenue,
Boston MA 02215 
 }

\email{gzheng90@bu.edu}

\subjclass[2020]{60F05, 60F15, 60H30}

\keywords{Almost sure limit theorem;  
central limit theorem;  
 Ibragimov-Lifshits criterion; Wiener chaos; 
Berry's random wave model; Malliavin differentiability; doubling condition;
regular variation.
}

\begin{abstract}

In this work, we investigate the asymptotic behavior of
integral functionals of stationary Gaussian random fields
as the integration domain tends to be the whole space.
More precisely, using the Wiener chaos expansion
and Malliavin-Stein method, 
we establish  an {\it almost sure central limit theorem} (ASCLT) 
only under mild conditions
on the covariance function of the underlying stationary
Gaussian fields.  In this setting,
we additionally derive a {\it quantitative central limit theorem} 
with rate of convergence in  quadratic Wasserstein distance, 
and show certain regularity property 
for the said integral functionals.  
In particular, we  solve an open question on the 
{\it Malliavin differentiability of 
the excursion volume of Berry's 
random wave model}. 
As a key consequence of our analysis, 
we obtain the exact asymptotic rate (as a function of the exponent $q$)
for the $q$-th  moment of Bessel functions, thus confirming   
a conjecture based on existing numerical simulations.
In the end, we provide two applications of our result:  
(i) ASCLT  in the context of
Breuer-Major  central limit theorems,  
(ii) ASCLT for Berry's random wave model.
Our approach does not require  
 any knowledge on the regularity properties of random variables 
(e.g., Malliavin differentiability) 
and hence not only complements the existing literature,
but also leads to novel results that are of independent interest.
\end{abstract}

\date{\today}
\maketitle

\tableofcontents

\baselineskip = 14pt

\vspace{-8mm}

\section{Introduction}

Let $\mathbf{B}= \{ B_x \}_{x\in \R^d}$ 
be a real-valued,  centered, and stationary Gaussian random field 
indexed by $\mathbb R^d$ with $d\geq 1$,
defined on some probability space $(\O, \F,  \PP)$.
The covariance function of $\mathbf{B}$ is given by

\noi
\begin{align}\label{cov_B}
    \cC(x) := \mathbb E[B_x B_0], \quad x\in \R^d.
\end{align}
Throughout this paper, we assume  that 
\begin{align}\label{C00}
\cC(0)=1
\quad
{\rm and}
\quad
\text{$\bfB$ is almost surely continuous on $\R^d$.}
\end{align}

In this paper, we study the following probabilistic object:

\noi
\begin{align}\label{def_Y}
Y_t=Y_t(\varphi) := \int_{t D} \varphi(B_x)  \,dx,
\quad t\geq 1,
\end{align}

\noi
where $D$ is  a compact subset  of $\R^d$ with 
nonempty interior
and   $\varphi\in L^2(\R, \frac{1}{\sqrt{2\pi}}e^{-x^2/2}dx)$.\footnote{Due to the stationarity of 
the underlying Gaussian field $\bfB$,
 the law of the Gaussian functional $Y_t$ in \eqref{def_Y} remains
 unchanged if we replace $tD$ by any of its translation $t(D-a)=\{ t(x-a): x\in D\}$. 
 Then, without losing any generality,
 one can always assume that $D$ contains an open ball centered at zero.
When $D$ is a ball, we simply assume that $D = \{ |x|\leq 1\}$.
  \label{ftone}}
For example, when  $\varphi(r) =\ind_{\{ r \geq u\}}$ with $u\in \mathbb R$,
the random variable  $Y_t$ is the volume of the upper level set 
$\{ \bfB \ge u\}\subseteq \R^d$ restricted to 
$tD=\{ ty: y \in D\}$.

Motivated by the recent study on the geometry 
of stationary Gaussian field, particularly on Berry's random wave model, 
we aim at establishing the almost sure central limit theorem ($\ASCLT$) 
for the  family $Y=\{Y_t\}_{t\geq 1}$ after proper normalization
under only mild assumptions on the covariance function $\mathcal C$
(i.e., decay at infinity and certain local structure near the origin).
 Additionally, we will find along the way 
a quantitative central limit theorem and 
show certain Malliavin differentiability for $Y$ (see Remark \ref{rem_malliavin}).
In particular, we solve an open question on the 
 Malliavin differentiability 
 of the excursion volume of Berry's random wave model (see Corollary \ref{corMalliavinBerry}).
Note that our method provides quantitative and almost sure central limit theorems 
even in cases where $Y$ is only minimally regular, 
while in existing literature higher order of Malliavin differentiability is usually required.
 Moreover, as a key consequence of our analysis, 
 we obtain the asymptotic exact rate (as a function of the exponent $q$) 
 of the $q$-th moment of Bessel functions 
 (and more generally of any covariance function
 with a certain local structure at the origin), 
 thus confirming a conjecture based on existing numerical simulations.

In what follows, we will give a brief overview on central limit 
theorems for the integral functional $Y_t$, notably we will recall 
the results around the celebrated Breuer-Major theorem. 
Then, we will introduce Berry's random wave model in 
Section \ref{S1_2}, and state our main results 
in Section \ref{S1_3}. 

\medskip
Let us first fix some notations.

\medskip

\noi
$\bul$ {\bf Notations.}  Given two functions $f(t), g(t)$, 
we write $f(t) \les g(t)$
if $\limsup_{t\to\infty}  \frac{f(t)}{g(t)} \in (0,  \infty)$.
Similarly, we write $f(t) \ges g(t)$ if $g(t) \les f(t)$;
we write $f(t) \asymp g(t)$ if $f(t) \les g(t)$
and $f(t) \ges g(t)$. We write $f(t) \sim g(t)$ if 
$\lim_{t\to\infty}  \frac{f(t)}{g(t)}  =1$.
In this paper, we let $\N = \{1, 2, ... \}$ be the set of positive
integers
and write $L^p(\R^d) = L^p(\R^d, dx)$ for the usual $L^p$ Lebesgue space.

\subsection{Breuer-Major type CLTs}
\label{S1_1}

Starting from the well-known fact that the Hermite polynomials

\noi
\begin{align} \notag 
 \Big\{ H_q(x) 
 = (-1)^q e^{\frac{x^2}{2}}  \frac{d^q}{dx^q}  e^{-\frac{x^2}{2}} :
 q\in\N\cup\{0\} \Big\}
\end{align}

\noi
are orthogonal polynomials 
with respect to the standard Gaussian measure on $\R$,
we have the following (Hermite) expansion  
in $L^2(\mathbb R, \frac{1}{\sqrt{2\pi}}e^{-x^2/2}dx)$: 
with $Z\sim\NN(0,1)$,

\noi
\begin{align}  \label{HerExp}
\varphi =  \E[ \varphi(Z)] +  \sum_{q=R}^\infty  a_q  H_q ,
\end{align}

\noi
where $a_q =a_q(\varphi) =\mathbb E[\varphi(Z)H_q(Z)]/q!$ 
(see also \eqref{HerEXP0}) 
and $a_R$ is the {\it first nonzero}
coefficient in this expansion, or equivalently,

\noi
\begin{align}    \notag   
R=\inf \{  q\geq 1 : a_q\neq 0 \}
\end{align}

\noi
with the convention $\inf\emptyset = +\infty$, 
is  called  the {\bf Hermite rank} of $\varphi$. 
With the above Hermite expansion \eqref{HerExp}, 
one can write 
in $L^2(\Omega)$
\begin{align} \label{CP0}
Y_t = \E[ Y_t] + \sum_{q\geq R} a_q \int_{tD} H_q(B_x) dx.
\end{align}

Next, using the orthogonality relation \eqref{HerP2} between
different Hermite polynomials together with the stationarity of $\bfB$, 
one can derive easily that with $Y_t$ as in \eqref{CP0} and $Z\sim  \NN(0,1)$,

\noi
\begin{align}   \notag   
  \E[Y_t] =  t^d  \, \Vol(D) \mathbb E[\varphi(Z)],
\end{align}

\noi
where $\Vol(D)$ denotes the {\it volume} of $D\subseteq\R^d$; on the other hand, 
by Fubini  and dominated convergence, 
we have

\noi
\begin{align}  \label{Var_Y}
\begin{aligned}
 \Var(Y_t) 
 &= \Big\|    \int_{t D} \big(  \varphi(B_x) -  \E [ \varphi(B_x)]  \big) \,dx \Big\|^2_{L^2(\O)} \\
 &=  \int_{ (t D)^2} \Cov  \big(  \varphi(B_x) ,   \varphi(B_y)     \big) dxdy \\
 &= \sum_{q=R}^\infty a_q^2 q!    \int_{ (t D)^2} \cC^q(x-y)\, dxdy,
 \end{aligned}
\end{align}

\noi
 where the last step follows from the fact that
$\E[ H_p(B_x ) H_q(B_y) ] = q! \cC^q(x-y) \ind_{\{ p=q\}}$; 
see also \eqref{HerP2} and \eqref{e_x2}.
In view of  the expression  \eqref{Var_Y}, 
the asymptotic behavior of the variance  
depends on the  integrability   of 
the covariance function $\cC$ and the coefficients $a_q$'s,
which further influence  the  fluctuation  of $Y_t$ (after proper normalization)
as  $t\to \infty$.

The problem of finding the exact fluctuation of 
the above integral functionals
has received  a great interest in past years 
since  the   work \cite{Mar76,  Taq75, Taq77, Taq79, DM79, Ros81, BM83}
by Maruyama, Dobrushin, Taqqu, Rosenblatt,  Breuer, 
and Major before 90s.
See also recent work \cite{LMNP24, NN20, CNN20, NZ20a, NZ20osc,
NNP21, NZ21,  MN23} that are akin to the Malliavin-Stein method
\cite{NP12}.

In the following, let us recall a few results around Breuer-Major's central
limit theorems (CLTs)  that are closely related to our results.

\begin{theorem}\label{recap}
Let $\varphi$ be as in \eqref{HerExp} with Hermite rank $R\geq 1$.
Let $\cC$ be the covariance function as in \eqref{C00}. 
Recall the definition \eqref{def_Y} of $Y_t$. 
Then, the following statements hold. 
 
\smallskip
\noi
{\rm (i) [Breuer-Major's theorem]} Assume  $\cC\in L^R( \R^d)$.  Then,
\begin{equation}\label{varBM}
 \frac{\Var(Y_t)}{t^d}
  \to \Vol(D)\sum_{q=R}^{+\infty} a_q^2\,q! \int_{\R^d}  \cC^q(z) \,dz
  =: \s^2 \in [0, \infty),
\end{equation}

\noi
and $\dfrac{Y_t - \E[Y_t]}{t^{d/2} }$ converges in law to $\NN(0, \s^2)$ as $t\to+\infty$.
See, e.g.,  \cite{BM83, CNN20}.

\smallskip
\noi
{\rm (ii)} Assume   $\cC \in L^M(\R^d) \setminus   L^R(\mathbb R^d)$ 
for some integer $M \geq R+1$
and  $ \cC^R(x)\geq 0$ for $|x|\geq x_0$
for some   $x_0 > 0$.
Then, the $R$-th chaotic component $\mathbf{J}_{t, R}:=a_R\int_{tD} H_R(B_x)dx $
in \eqref{CP0} is dominant:

\noi
\begin{align} \label{twoV}
\frac{\s_{t, R}^2}{\s_t^2}: =\Var\bigg( a_R\int_{tD} H_R(B_x)dx \bigg) \frac{1}{ \Var(Y_t)} \xrightarrow{t\to+\infty}1,
\end{align}

\noi
i.e., $\s_{t,R}^2\sim \s_t^2$.
Moreover, the following equivalence holds:
\begin{equation}  \notag 
\frac{\mathbf{J}_{t, R}}{\s_{t, R}} = \frac{a_R}{\s_{t, R}} \int_{tD} H_R(B_x)dx     \xrightarrow[\rm law]{t\to\infty}  \NN(0,1) 
   \quad\text{if and only if}
   \quad
      \frac{ Y_t - \E[Y_t]}{\s_t}  \xrightarrow[\rm law]{t\to\infty}  \NN(0,1). 
\end{equation}

\end{theorem}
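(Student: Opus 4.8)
\noindent\textbf{Proof proposal for Theorem~\ref{recap}(ii).}
The plan is to first prove the domination \eqref{twoV}, i.e.\ $\s_{t,R}^2\sim\s_t^2$, and then read off the stated equivalence by a soft Slutsky argument. I will use two elementary facts about $\cC$ throughout. First, $\cC(0)=1$ and Cauchy--Schwarz give $|\cC|\le1$ on $\R^d$. Second, the a.s.\ continuity of $\bfB$ makes $\cC$ continuous at $0$, hence uniformly continuous (from $|\cC(x)-\cC(y)|\le\sqrt{2(1-\cC(x-y))}$), and combined with $\cC\in L^M(\R^d)$ this forces $\cC(z)\to0$ as $|z|\to\infty$. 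By footnote~\ref{ftone} I assume $\{|x|\le r\}\subseteq D$ for some $r>0$. I will also use, for $f\in L^1$, the identity $\int_{(tD)^2}f(x-y)\,dxdy=\int_{\R^d}f(z)\,\Vol\big(tD\cap(tD-z)\big)\,dz$, together with the geometric bound that $\Vol(tD\cap(tD-z))$ lies between $c_0t^d$ and $\Vol(D)\,t^d$ whenever $|z|\le tr/2$ (because then $\{|x|\le tr/2\}\subseteq tD\cap(tD-z)$), with $c_0>0$ depending only on $D$. Set $R_t:=\sum_{q>R}a_q\int_{tD}H_q(B_x)\,dx$, so that $Y_t-\E[Y_t]=\mathbf J_{t,R}+R_t$ and, by orthogonality of Wiener chaoses, $\s_t^2=\s_{t,R}^2+\Var(R_t)$; in particular the defect $\s_t^2-\s_{t,R}^2\ge0$, and $\s_{t,R}^2=a_R^2R!\,I_R(t)$ with $I_R(t):=\int_{(tD)^2}\cC^R(x-y)\,dxdy$.

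The lower bound on $\s_{t,R}^2$ is where the hypotheses $\cC\notin L^R$ and $\cC^R\ge0$ near infinity enter. Splitting the $z$-integral for $I_R(t)$ into $\{|z|<x_0\}$, $\{x_0\le|z|\le tr/2\}$, $\{|z|>tr/2\}$: for $t$ large enough that $tr/2>x_0$, one has $\cC^R=|\cC|^R\ge0$ on the last two pieces, so the third piece is nonnegative, the first is $O(t^d)$ in absolute value, and the middle is at least $c_0t^d\int_{x_0\le|z|\le tr/2}\cC^R(z)\,dz$. Since $\cC^R\ge0$ past $x_0$ and $\cC\notin L^R(\R^d)$, monotone convergence makes that last integral tend to $+\infty$, so $I_R(t)/t^d\to\infty$ and hence $\s_{t,R}^2\gg t^d$.

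For the defect, $|\cC|\le1$ gives $\big|\sum_{q>R}a_q^2q!\,\cC^q(z)\big|\le C_1|\cC(z)|^{R+1}$ with $C_1:=\sum_{q>R}a_q^2q!=\Var(\varphi(Z))-a_R^2R!<\infty$, so by \eqref{Var_Y} $0\le\s_t^2-\s_{t,R}^2\le C_1\int_{(tD)^2}|\cC(x-y)|^{R+1}\,dxdy$. Given $\eps>0$, pick $A\ge x_0$ with $|\cC|<\eps$ off $\{|z|\le A\}$. On $\{|z|\ge x_0\}$ I factor $|\cC(z)|^{R+1}=|\cC(z)|\,\cC^R(z)$; after the change of variables the part of the integral over $\{|z|>A\}$ is then at most $\eps\int_{(tD)^2}\cC^R(x-y)\,dxdy+\eps\,O(t^d)=\eps\,I_R(t)+\eps\,O(t^d)$, while the part over $\{|z|\le A\}$ is $O_\eps(t^d)$. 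Dividing by $\s_{t,R}^2\asymp I_R(t)\gg t^d$, sending $t\to\infty$ and then $\eps\downarrow0$, we get $(\s_t^2-\s_{t,R}^2)/\s_{t,R}^2\to0$, which is \eqref{twoV}. These two variance estimates are the heart of the matter: the argument must simultaneously exploit the \emph{non-integrability} of $|\cC|^R$ (which pushes $\s_{t,R}^2$ above the CLT scale $t^d$), the \emph{sign condition} $\cC^R\ge0$ past $x_0$ (which both makes $\int\cC^R$ genuinely $+\infty$ and licenses the factorization $|\cC|^{R+1}=|\cC|\,\cC^R$), and the \emph{decay} $\cC(z)\to0$ (which renders the higher chaoses negligible), while the geometric control of $\Vol(tD\cap(tD-z))$ is what transfers these statements on $\R^d$ into statements about $\int_{(tD)^2}$.

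The equivalence is then routine. From \eqref{twoV}, $\Var(R_t)=\s_t^2-\s_{t,R}^2=o(\s_t^2)$, so $R_t/\s_t\to0$ in $L^2(\O)$ and hence in probability, and $\s_{t,R}/\s_t\to1$. Since
\[
 \frac{Y_t-\E[Y_t]}{\s_t}=\frac{\s_{t,R}}{\s_t}\cdot\frac{\mathbf J_{t,R}}{\s_{t,R}}+\frac{R_t}{\s_t},
\]
Slutsky's theorem gives both implications: if $\mathbf J_{t,R}/\s_{t,R}$ converges in law to $\NN(0,1)$ then so does $(Y_t-\E[Y_t])/\s_t$; conversely, if the latter converges in law to $\NN(0,1)$, then $\mathbf J_{t,R}/\s_t=(Y_t-\E[Y_t])/\s_t-R_t/\s_t$ does as well, and multiplying by $\s_t/\s_{t,R}\to1$ gives the convergence of $\mathbf J_{t,R}/\s_{t,R}$.
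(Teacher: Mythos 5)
Your argument for part (ii) is correct. The paper itself gives no proof of this statement: part (i) is cited to \cite{BM83, CNN20}, and part (ii) is cited to \cite[Proposition 2.2]{MN23}, with Remark \ref{recap}$+1$ (the remark following the theorem) merely observing that the proof there goes through when ``$R$ even'' is relaxed to ``$\cC^R\ge 0$ for $|x|$ large''. So there is nothing in the paper to compare against line by line; what you have supplied is a self-contained verification of exactly the generalized statement. The two pillars of your argument are sound: (a) the lower bound $\s_{t,R}^2/t^d\to\infty$, obtained by combining the inclusion $\{|x|\le tr/2\}\subseteq tD\cap(tD-z)$ for $|z|\le tr/2$ with monotone convergence applied to $\cC^R\ind_{\{|z|\ge x_0\}}=|\cC|^R\ind_{\{|z|\ge x_0\}}\notin L^1$; and (b) the defect bound, where the factorization $|\cC|^{R+1}=|\cC|\cdot\cC^R$ on $\{|z|\ge x_0\}$ together with $\cC(z)\to 0$ (which you correctly derive from uniform continuity plus $\cC\in L^M$) yields $\s_t^2-\s_{t,R}^2\le \eps\,O(I_R(t))+O_\eps(t^d)=o(\s_{t,R}^2)$. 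The Slutsky step is routine. Two small caveats, neither fatal: you silently rely on the footnote's reduction that $D$ contains a ball centered at the origin (the paper asserts this ``without loss of generality'', so you are within its conventions, though for a general compact set of positive volume this requires $D$ to have nonempty interior); and you do not address part (i), which the statement formally includes — but since the paper also only cites Breuer--Major for that part, this omission mirrors the paper's own treatment.
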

 
 \begin{remark}\rm 
 	Theorem \ref{recap}-{\rm (ii)} is proved, e.g., in \cite[Proposition 2.2]{MN23}. In fact, even if
    $R$ is assumed to be even in \cite[Proposition 2.2]{MN23}, 
    its proof also works under the more general assumption 
    that ``$\cC^R(x)\geq 0$ for $|x|$ large enough". 
 \end{remark}

The original proof of Breuer-Major's theorem
is done by the method of moments, and a modern treatment
using the chaotic central limit theorem
 is given in, e.g., \cite[Chapter 7]{NP12}.
 The chaotic CLT was first developed in \cite{HN05},
 as a   consequence of the fourth moment theorems 
 \cite{NP05, PT05}. 
 Roughly speaking, one can break the proof of Theorem \ref{recap}-(i) into that on each chaos,
 meaning that due to \eqref{varBM}, the tail 
 $t^{-d/2}\sum_{q\geq k} a_q\int_{tD} H_q(B_x)dx$ can be uniformly (in $t$)
 controlled in $L^2(\Omega)$ as $k\to+\infty$,
 then the CLT in Theorem \ref{recap}-(i) follows from that for any  finite expansion,
 which will follow from a multivariate CLT
 \[
t^{-d/2} (\mathbf{J}_{t, R}, ... , \mathbf{J}_{t, k} ) \xrightarrow[\rm law]{t\to\infty}
 (Z_R, ... , Z_k)
 \]
for any $k > R$, where $ (Z_R, ... , Z_k)$ is a centered Gaussian vector 
with  independent entries. By the fourth moment theorems of 
Nualart, Peccati, and Tudor \cite{NP05, PT05} 
(see also Theorem \ref{FMT_NP}),
the proof of Theorem \ref{recap}-(i)  is then reduced to verifying the CLT for each  
component $\mathbf{J}_{t, k}$.

Note that the case (i) is in the short-range dependent setting, meaning
that the covariance function of 
$\varphi(\bfB)=\{ \varphi(B_x): x\in\R^d\}$ is globally integrable, 
while in the case (ii), the first nontrivial chaotic component 
(i.e., $\mathbf{J}_{t, R}$) is dominant, 
and the validity of the CLT for $Y_t$ is equivalent 
to that of $\mathbf{J}_{t, R}$. In this case, 
$Y_t$ does not always admit Gaussian fluctuations (regardless
of the parity of $R$). 
For example, 
when $\cC(x)= |x|^{-\be} L(|x|)$ with $\beta < d/R$ and 
$L: \R_+ \to  \R$ 
a slowly varying
function (see Section \ref{S2}), 
it is well known that 
$\sigma^2_t \sim \sigma^2_{t,R}$,  
and   $\mathbf{J}_{t, R}$ (and thus $Y_t$) does not admit
Gaussian fluctuation 
as soon as $R\geq 2$;  see \cite{DM79} and Lemma  \ref{tech5}.
We also refer interested readers to \cite{LMNP24,Mai23,LMthesis,MN23}
that elaborate more on the case (ii).
 
The above discussion is only about the central limit theorems at
the qualitative level. In the following, 
we will briefly review relevant quantitative central limit theorems
that often come along the application of Malliavin-Stein's method. 
  Let us first introduce frequently used distances (total variation distance
  and Wasserstein distance of oder $p\geq 1$):

  \noi
 \begin{align} \label{def_dist}
  \begin{aligned}  
 d_{\rm TV}( Y, Z) &= \frac{1}{2}\sup_{h\in\F_{\rm TV}} | \E[  h(Y)]  - \E[ h(Z) ] | \\
 W_p( Y, Z) &= \inf  \| Y_1 - Z_1\|_{L^p(\Omega)}, 
  \end{aligned}
  \end{align}
  
  \noi
where $\F_{\rm TV}$ denotes
 the set of bounded measurable functions $h:\R\to [-1,1]$,
and   the above infimum runs over all joint laws of $(Y_1, Z_1)$
with $Y_1 = Y$ in law and $Z_1 = Z$ in law.
For $p=1$, we have 
 $W_1( Y, Z) = \sup_{h\in\F_{\rm W}} | \E[  h(Y)]  - \E[ h(Z) ] |$,
  where
   $\F_{\rm W}$ is the set of Lipschitz continuous functions $h:\R\to \R$
such that $\| h'\|_\infty \leq 1$.
It is trivial that 
$W_1(Y,Z) \leq W_2(Y, Z) \leq \| Y-Z\|_{L^2(\Omega)}$;
see \cite[Chapter 6]{Villani} for more 
on Wasserstein metrics.
When there is only one chaos in \eqref{CP0}, say, $\mathbf{J}_{t, R}$ as
in \eqref{twoV}, one has the Nourdin-Peccati bound (see, e.g., \cite[Theorem 5.2.6]{NP12})

\noi
\begin{align}  \notag 
{\rm dist}\big(   \mathbf{J}_{t, R} /   \s_{t,R}   , \NN(0,1)  \big)
\les \sqrt{  \s^{-4}_{t,R}  \E[ \mathbf{J}^4_{t, R}] - 3  },
\end{align}

\noi
and further estimation of the fourth moment using product formula
will lead to computations of contractions, from which one can 
get a rate of convergence. 
The above {\rm dist} can be $d_{\rm TV}$
and $W_1$ distances in
\eqref{def_dist}.  
Note that a general Malliavin-Stein bound can be used
to deal with the case of finitely many chaoses 
(see, e.g., Proposition \ref{MS_bdd}), while it is a hard task 
to establish a rate of convergence for the case of \emph{infinitely} many
chaoses. The search for a quantitative central limit theorem
in the context of   Breuer-Major was first carried out 
in the work \cite{NZ21}, and further explored
in \cite{NPY, NNP21, ADP24}.  These works heavily rely
on Malliavin calculus and in particular assume at least
that the underlying function $\varphi$ is differentiable 
with  $\varphi'$ square-integrable, which rules out
the case where $\varphi(x)= \ind_{\{ x\geq 0 \}}$.
In another work \cite{KN19}, the authors imposed further 
assumption on the coefficients (i.e., $a_q$ in \eqref{HerExp}),
which in turn requires Malliavin differentiability of $\varphi$;
see Section 3.2 in \cite{KN19}.
One may try to first establish a quantitative CLT 
in Wasserstein distance for finitely many chaoses 
and then get the rate for the whole series by a triangle
inequality with a $L^2(\Omega)$-bound on the  remainder.
 See, e.g., \cite{Tod19} for an example. 
This approach is not useful in the Breuer-Major setting, 
since the (normalized) tail does not tend to zero in mean square.
For this reason,  one can instead truncate the chaotic series of $Y_t$ in 
\eqref{CP0} up to order $N=N_t$  with $N_t\uparrow +\infty$,
then control the contribution of the  first $N$   chaoses

\noi
\begin{align}  \label{equ:YtN}
Y_{t,N} := \sum_{q=R}^{N} a_q \int_{tD} H_q(B_x) dx
\end{align}

\noi
 via the usual Malliavin-Stein method 
 (see, e.g., Proposition \ref{MS_bdd}),
and then exploit the variance of  normalized tail.
Usually, to find a rate  for the latter, one takes advantage 
of the  decay rate of chaotic coefficients 
$\{ a_q^2 q!\}_{q\geq 1}$, 
as done, for instance,
 in \cite{MW14, Ros19}. 
 Note that also in \cite{NPP11}, 
 where the authors proved a quantitative Breuer-Major theorem 
 without regularity assumptions on $\varphi$, 
 or on the decay of the coefficients $\{ a_q^2 q!\}_{q\ge 1}$, 
 their bounds in Wasserstein distance $W_1$
 depend on the latter decay through the constant $A_{2,n}$ (defined in \cite[(2.14)]{NPP11}).  
 On the contrary, in our paper 
 we do not rely on the regularity assumption on
 $\varphi$, nor on     the decay properties of  coefficients, 
 but rather   we only impose mild assumptions
  on the covariance function $\cC$
 that ensures  a sufficiently fast decay for $\int_{\R^d}|\cC(x)|^q\,dx$ 
  as $q\rightarrow\infty$ (see Lemma \ref{tech1}).
Our assumptions are motivated by 
 the study of  Berry's random wave model introduced below. See Remarks \ref{rem4_56}--\ref{rema1} for more discussions.

\subsection{Berry's random wave model}
\label{S1_2}

As a key motivating example for our work, 
we briefly introduce Berry's random wave model; 
see  \cite{Wig10, MW14, NPR19,BCW20, PV20, Vid21, GMT24, Smu24} 
and the references therein.

Consider $d=2$ and $\cC(x)=J_0(| x|)$, 
where $J_p$ denotes the Bessel function 
of the first kind of order $p\geq 0$:

\noi
\begin{align}  \notag  
J_p(r) 
= (r/2)^p \sum_{j=0}^\infty  (-1)^j \frac{r^{2j} }{4^j j! \Gamma(p+j+1)},
\quad r\in\R. 
\end{align}
See, e.g.,  \cite{Kra14} for basics on Bessel functions. 
With the covariance structure $\cC(x)=J_0(| x|)$, $x\in\R^2$,
the Gaussian  random field  $\bfB$
 is the so-called Berry's random field,
  which is conjectured to be  the universal model for
   high-energy eigenfunctions at least on
    ``generic'' classically chaotic billiards \cite{Ber77}. 
   Actually, Euclidean random waves are well defined in any dimension 
   $d\geq 2$, having a \emph{radial} covariance function of the form 
   $
     \cC(x) = b_d(|x|)$
      for $x\in \R^d$,
 where, for $r>0$, 
 
 \noi
 \begin{align}\label{bd}
     b_d(r) := 
     2^{\frac{d}{2}-1}\Gamma  (d/2   ) 
     J_{\frac{d}{2}-1}(r) r^{-\frac{d}{2}+1};
 \end{align}
 
 \noi
see, e.g.,  \cite[Proposition 6.1]{MN23}. 
Note that $b_2=J_0$ and   
$b_d$ can be represented
as a Fourier transform on the unit sphere with respect to the surface measure 
(see, e.g., \cite[(17)]{MN23}):

\noi
\begin{align}\label{bd2}
\cC(z) = b_d(|z|) = \frac{1}{\o_d} \int_{\{ |\xi|=1  \}} e^{ i z \cdot \xi} d\xi,
\end{align}
where $\o_d$ is a normalizing constant such that $\cC(0)=1$
(i.e.,  $\omega_d = \frac{2\pi^{d/2}}{\Gamma(d/2)}$).
 The integrability property of the covariance function $\cC(x)=b_d(|x|)$ 
 follows from the asymptotic behavior for large argument of Bessel functions:
 \begin{equation}  \label{Jdinf}
 J_p(r) =
  \sqrt{\frac{2}{\pi r}} \cos \Big( r - \frac{2p+1}{4} \pi  \Big) +   O(r^{-3/2}),
  \quad
  \text{as $r\to+\infty$};
  \end{equation}
 see, e.g., \cite[Theorem 4]{Kra14}.
In particular, we have the following asymptotic results. 
For simplicity, we state them assuming that $D$ is a \emph{ball}, 
but they also hold for more general domains;  
see \cite[Section 4]{Mai23} and \cite{MN23} for more details. 
Moreover, we implicitly assume that $\varphi$ is not linear 
(i.e., $a_q\neq0$ for some $q\ge2$), 
since otherwise $Y_t$ is obviously Gaussian.

As a consequence of   \cite[Theorem 1.3]{GMT24} and \cite[(22)]{MN23},\footnote{Here,  
we provide more details on the case $q=1$ 
while the other cases can be found in  \cite[Theorem 1.3]{GMT24}:
since $\cC(x) = b_d(|x|)$, with $b_d$ given as in \eqref{bd2}, we can write (assuming $D=\{|x|\leq 1\}$)
\[
(\ast) :=\int_{(tD)^2}\cC(x-y)dxdy 
= \frac{1}{\o_d} \int_{|\xi|=1} d\xi \Big| \int_{|x|\leq t} e^{ix\cdot \xi} dx \Big|^2
=\frac{(2\pi t)^d}{\o_d} \int_{|\xi|=1} |\xi|^{-d} J^2_{d/2}(t |\xi|) d\xi,  
\]
where the last inequality follows from a well known fact on Fourier transform of indicator of balls (see, e.g., 
\cite[Lemma 2.1]{NZ20a}).
That is, we have $(\ast) = (2\pi t)^d J^2_{d/2}(t) \les t^{d-1}$ as $t\to\infty$ in view of \eqref{Jdinf}.
This explains the big-O bound for  $q=1$. 
} 
we have 
	\begin{align}\label{lab1}
	\int_{(tD)^2}\cC^q(x-y)\,dx\,dy\asymp\,
    \begin{cases}
		  O(t^{d-1}) \qquad &\textnormal{if } q=1 \\
		t^{d+1}\qquad &\textnormal{if } q=2 \\
		t^2 \log t\qquad &\textnormal{if } (d,q)=(2,4)\\
		t^d \qquad &\textnormal{otherwise}\\
	\end{cases} 
    \qquad\text{as $t\to +\infty$.}
	\end{align}
	
\noi
In particular, these rates have two consequences.
\begin{itemize}
  \item[(i)]    The first chaos is always negligible. Indeed, when $R=1$, 
  we can study $Y_t(\varphi-a_1H_1)$ (with Hermite rank $R'\ge2$) instead of $Y_t(\varphi)$, 
  since they have the same asymptotic variance and distribution. 
    \item[(ii)] We can easily deduce from   \eqref{Var_Y}
    and \eqref{lab1} that    

    \noi
	\begin{align}\label{varBerry}
\s^2_t { \asymp}  
\begin{cases}
t^{d+1}\qquad & \text{if  $a_2\neq 0$}  \\
t^2 \log t\qquad &\text{if $a_2=0, a_4\neq 0, d=2$ }\\
t^d \qquad &\text{if $a_2=0, a_4\neq 0, d\geq 3$}\\
t^d \qquad &\text{if $a_2=a_4= 0$, $a_q\neq 0$ for $q=3$ or $q\geq 5$}.
\end{cases}
\end{align}

    \end{itemize}
\noi
Recall that $R$ denotes the Hermite rank of $\varphi$ and 
let $R'$ denote the Hermite rank of $\varphi -  a_R H_R$ 
	(called the {\bf second Hermite rank} of $\varphi$). 
 For example,   $a_2\neq 0$ if and only if $R=2$,  or $(R=1, R'=2)$;
 $(a_2=0, a_4\neq 0, d=2)$ if and only if   $(d=2,\,R\ge 3, \,a_4\neq0)$,  or  $(d=2,\,R=1,\, R'\ge 3, \,a_4\neq0)$.

Moreover, excluding the cases 

\noi
\begin{align}    \label{excases}
\begin{aligned}
    \begin{cases}
 & \text{(1) $\varphi$ linear} \\
  &\text{(2) $R=3$, $a_4=0$, $d=2$} \\
  &\text{(3) $R=3$, $d=3$} \\
  &\text{(4) $R=1$, $R'=3$, $a_4=0$, $d=2$} \\
   &\text{(5) $R=1$, $R'=3$, $d=3$},
  \end{cases}
\end{aligned}
\end{align}

\noi
the {\it spectral central limit theorem} in \cite[Theorem 1.2]{MN23} 
implies that

\noi
\begin{equation*}
 \frac{Y_t- \E[Y_t]}{\sqrt{\Var(Y_t)}} \xrightarrow[\rm law]{t\to\infty}  \NN(0,1).
\end{equation*}

\noi
For instance, when $\varphi(y) =\ind_{\{ y \ge u\}}$
 with $u\neq 0$, we have $R=1$ and $R'=2$, so we have a CLT with $\sigma_t^2 \asymp t^{d+1}$;
 when $\varphi(y) =\ind_{\{ y \geq 0\}}$,
  we have $R=1$, $R'=3$ and $a_4=0$ (as in cases (4)-(5) in \eqref{excases}), so $\sigma_t^2 \asymp t^d$ but the asymptotic distribution is an open problem for $d=2,3$;
   see Example \ref{Example1}.

The asymptotic behavior of $Y_t$, when $R\ge 5$, or $(R\ge 3$ and $d>3)$,
 can be established via an application of  Theorem \ref{recap}-(i), 
 while for the cases $R=2$ and the case $(R, d)=(4, 2)$,
  a reduction principle to the $R$-th chaos 
  (as in Theorem \ref{recap}-(ii)) holds. The case $(R,R',d)=(3,4,2)$ is slightly different, since a reduction principle to the $4$-th chaos holds.
  In the remaining cases not excluded in \eqref{excases} with $R=1$, we can exploit the fact that the first chaos is always negligible, studying the asymptotic distribution of $Y_t(\varphi-a_1H_1)$ instead of that of $Y_t(\varphi)$, so we can replace every statement above for $R\ge2$ with the same statement, but with $R$ replaced by $R'\ge2$ (and $R=1$).

For more details on $d=2$ and generalization to 
the higher dimensional setting, 
we refer interested readers to   \cite{MN23, GMT24} and \cite[Section 4]{Mai23}. 
Indeed, these references contain results 
 in   their generality, 
 except for the excluded  cases in \eqref{excases},
  in which only the asymptotic variance is known  
  and  the asymptotic distribution is totally open. 
 In particular, for $d > 3$, asymptotic normality
  for $Y_t = \int_{tD} \ind_{\{ B_x\geq 0\}} dx$ 
  (i.e., the \emph{nodal excursion volume}) can be proved via 
  the chaotic central limit theorem (as for Theorem \ref{recap}-(i)),
  once   its first chaotic component is shown to 
  be asymptotically  negligible.

We would like to point out that there is no   general result 
on quantitative central limit theorems 
for integral functionals of Berry's random wave model.
 Depending on the Hermite rank of $\varphi$ 
 and its regularity properties,
  one may choose one of the strategies mentioned 
  below  \eqref{def_dist}
and pursue it to get a rate of convergence. 
Our result, Theorem \ref{thm:main}-{\bf(1)},
 instead will allow to directly provide  a rate of convergence  
  for {\bf every} $\varphi$ such that   $Y_t= Y_t(\varphi)$  
  is known to admit  Gaussian fluctuations 
  (i.e., excluding the cases in \eqref{excases}).
 As we will see, 
  the covariance function of Berry's random wave model 
  \eqref{bd}  satisfies  our Conditions \ref{cond5}-\ref{cond8} below.

\subsection{Main results}
\label{S1_3}

Let us first recall the definition of almost sure central limit theorem
($\ASCLT$ for short).

\begin{definition} \label{ASCLT}
 A family  $\{  F_t \}_{t\ge 1}$ of real random variables on 
 $(\Omega, \mathcal F, \PP)$
 are said to satisfy 
the   $\ASCLT$,
if for $\PP$-almost every  $\omega\in\Omega$,

 \noi
 \begin{align} \label{def2}
\nu_T^\o
:= \frac{1}{\log T} \int_1^T \dl_{F_t(\omega)}   \frac{1}{t}  dt
\end{align}

\noi
weakly converges to  the standard Gaussian measure,
as $T\to\infty$.
See, e.g., \cite[Definition 1.1]{BXZ23}.

\end{definition}

\begin{remark}\label{equivdef}\rm  


Due to the separability of $\R$, one can find a 
sequence $\Phi:=\{\phi_n\}_{n\geq 1}$ 
of real bounded Lipschitz functions on $\R$
such that $\Phi$ is a separating class 
for the weak convergence of probability measures on $\R$.
It is then clear that 
$\{F_t\}_{t\geq 1}$ satisfies the $\ASCLT$
if and only if  for any bounded Lipschitz continuous 
function $g:\R\to\R$,

\noi
\begin{align}\label{equiv1}
\frac{1}{\log T} \int_1^T  g( F_t)   
\frac{1}{t} dt \xrightarrow[\textup{almost  surely}]{T\to+\infty} 
\int_\R g(x) \frac{1}{\sqrt{2\pi}} e^{-\frac{x^2}2} dx.
\end{align}

Because  $g$ is bounded, 
the above almost sure convergence still holds if we replace
$\frac{1}{\log T} \int_1^T  g( F_t)   \tfrac{1}{t} dt $
by $\frac{1}{\log T} \int_{t_0}^T  g( F_t)   
\tfrac{1}{t} dt $ for any given $t_0 >0$.
In this paper, when we consider a family of random  variables
$\{ F_t: t\geq t_0\}$ and $F_t$ may not be defined for $t < t_0$,
we will say 
$\{ F_t: t\geq t_0\}$
or $\{ F_t: t\geq 1\}$ satisfy the $\ASCLT$ if \eqref{equiv1}
holds. This shall not cause any ambiguity.
%

\end{remark} 

The $\ASCLT$ in its simplest form can be stated 
for i.i.d. random variables $\{X_i\}_{i\geq 1}$ with mean zero and   
variance one: the classical CLT asserts that
$M_n = \frac{1}{\sqrt{n} } (X_1+ ... + X_n)$ converges in law
to a standard normal distribution as $n\to+\infty$ 
and the  $\ASCLT$ asserts that 

\noi
\begin{align}\label{ASCLT_dis}
\frac{1}{\log n} \sum_{k=1}^n \frac{1}{k} g(M_k) 
\xrightarrow[\rm \text{almost surely}]{n\to+\infty} 
\int_\R g(x) \frac{1}{\sqrt{2\pi}} e^{-\frac{x^2}2} dx,
\end{align}

\noi
which is a discrete-time analogue of \eqref{def2} and \eqref{equiv1}.
The first $\ASCLT$ result was stated by P. L\'evy in
his book \cite[page 270]{Levy54}
without a proof, and
it had not gained much attention until being rediscovered 
by various authors in the 1980's 
(\cite{Fisher87, Bro88, Schatte88, LP90}).
See \cite{BC01, Jon07} for a brief introduction.

To the best of our knowledge, existing criteria to prove the  
$\ASCLT$ for  Gaussian functionals  would often require 
demanding  conditions on the  Malliavin derivatives
(see, e.g.,  \cite[Theorem 3.2]{BNT}). 
For instance, if $\varphi$ is a polynomial, 
or more generally  symmetric and twice differentiable
such that  $\mathbb E[|\varphi''(Z)|^4]<+\infty$ with $Z\sim  \NN(0,1)$, 
then \cite[Theorem 3.4]{BNT} immediately entails 
the $\ASCLT$ for the discrete-time counterpart of \eqref{def_Y}
as in  \eqref{ASCLT_dis}, when 
  the underlying Gaussian sequence 
  has summable covariance function. 
These approaches would not   perform well 
 with a wide class of pairs $(\varphi, \bfB)$,
  for which a central limit theorem already holds,
  for example, when $\varphi(r) = \ind_{\{ r\ge 0\}}$ 
  and $\bfB$ has the covariance structure  \eqref{bd} 
  (Berry's random wave model) with $d>3$.
 In this case,  $\varphi(r) = \ind_{\{ r\ge 0\}}$  is not regular, 
 and moreover dealing with differentiability properties of 
 the excursion volume is not an easy task in general; 
 see \cite{AP20, PS24} for the case of the volume of level sets, 
 and \cite{CGR24} for the investigation of Malliavin differentiability 
 of  smooth statistics of Gaussian random waves 
 on the round sphere, 
 whose aim is to prove a quantitative CLT
  in the total variation distance 
  following  the approach in \cite{BCP}.

\medskip

The main goal of our paper is to establish $\ASCLT$
 for $ \{ Y_t \} $ in \eqref{def_Y}
\emph{without imposing  any regularity assumption
on the function  $\varphi$}.
Notably, we will be able to establish the very first 
$\ASCLT$ for Berry's random wave model (see Corollary 
\ref{cor:berry}).

 In order to state our main theorems, we need to introduce a few assumptions. First of all, we need some control on the behavior of the covariance
 function  $\cC$ at infinity (global dependence) and at zero (local dependence).

\begin{condition}
    \label{cond5}
   There exist some  $\dl, C_1\in(0,\infty)$  such that 
    $|\cC(x)| \leq   C_1 |x|^{-\dl}$ for every $x\in\R^d$.

\end{condition}

\begin{condition}
    \label{cond6}
    
There exist some constants  $C_2,\epsilon, \al \in(0,\infty)$ 
such that   for $|x|<\epsilon$:
\begin{equation} \notag  
 \cC(x) \leq 1 - C_2 |x|^{\al}.
 \end{equation}
   
  \end{condition}
  \noi
  See Remark \ref{rem4_56} for more elaboration on the above two conditions.
Define now, for any integer  $M\ge R$,  
 
 \noi
  \begin{equation}\label{wrM}
     r\in\R_+
            \mapsto w_{r,M}:=\int_{|x|\le r}\sum_{q=R}^{M} q!a_q^2 \cC^q(x)\,dx.
            \end{equation}

\noi
Roughly speaking, \eqref{wrM} refers to the integral  behavior 
of the covariance function 
${\rm Cov}( \varphi(B_x), \varphi(B_0))$
 of 
$\varphi(B_x)=\sum_{q}a_qH_q(B_x)$ and 
$\varphi(B_0)$ on growing balls
 when we cut the series at the threshold $M$.

 \noi
\begin{condition}
    \label{cond7}
    One of the  following three sets of conditions holds:
    \begin{enumerate}
    
\item[(c1)]   $\cC\in L^R(\R^d)$ and $\varphi-\E[\varphi(Z)]$ is not odd with $Z\sim\NN(0,1)$;

\item[(c2)]    $\cC^R\ge0$ and $r\mapsto w_{r,R}$ in \eqref{wrM} 
        is regularly varying
        at infinity 
         {\rm(}see Section \ref{SEC_RV}{\rm)};
         
\item[(c3)]   $D$ is a centered ball and 
$\exists$ $M>\frac{d}{\dl}-1$ {\rm(}$\dl$ as in Condition \ref{cond5}{\rm)}
 such that, as $r\to\infty$, 
 the function 
            $   r\in\R_+\mapsto w_{r,M}$ in \eqref{wrM}
             is regularly varying at infinity
 with a  limit $w_{\infty,M}\in(0,\infty]$.

                \end{enumerate}
\end{condition}

Furthermore, let us introduce 
for $t\ge 1$, 
\begin{equation}
    \label{def_HT}
    h_t(k_1,k_2):=
    \int_{(tD)^4}
    \cC^{k_1}(x-y)\,
    \cC^{k_1}(z-w)
    \cC^{k_2}(x-z)
    \cC^{k_2}(y-w)\,dx dy dzdw
\end{equation}
and for any integer $m\geq R$,

\noi
\begin{align}
    \label{XIM0}
    \xi_m(t):=\sup\Big\{ \frac{\sqrt{h_t(k_1,k_2)}}{\sigma_t^2}:
    k_1,k_2\ge 1 \,\,\, {\rm and}\,\,\, k_1+k_2\geq m \Big\}.
\end{align}

\begin{condition}    \label{cond8}
   There exist two constants   $\theta_0 ,  C  \in(0, \infty)$
  such that
  \begin{equation}\label{xiLeo}
  \xi_R(t) \leq   \frac{C}{   \log^{\theta_0}(t)},
  \end{equation}
  where $\xi_R$ is defined as in \eqref{XIM0}.
   See Remark \ref{rem4_78} for more discussions.
\end{condition}

We are now in a position to state our main result.

\begin{theorem}\label{thm:main}
Let $\bfB=\{ B_x\}_{ x\in \R^d}$ be a real-valued  continuous 
centered stationary Gaussian random field
with   covariance function $\cC$ 
as in \eqref{cov_B} and \eqref{C00}.
 Assume that  
Condition \ref{cond5} holds for some $\dl > 0$,
Condition \ref{cond6} holds for some $\al >0$,
Condition \ref{cond8} holds for some $\theta_0 > 0$,
and Condition \ref{cond7} also holds. 
Consider the random variable $Y_t$ 
defined as in \eqref{def_Y}
with $\varphi$ having Hermite rank $R\geq 1$
and   $\s_t^2 = \Var( Y_t)$.
Then, the following statements hold.

\smallskip
\noi
{\bf (1) [\textsf{QCLT}]} $\s^2_t > 0$ for any $t > t_0$
with $t_0 > 0$ large enough,
 and 
we have the following quantitative central limit theorem 
described in quadratic Wasserstein distance \eqref{def_dist}:

\noi
\begin{align} \label{QCLT_bdd1}
W_2 \Big(  \frac{ Y_t - \E[Y_t]}{\s_t},  \NN(0,1)  \Big) 
\les   \log^{-\theta_1}(t),
\end{align}

\noi
where $\theta_1 
=   \min\{  \theta_0 - \theta, \tfrac{\theta d}{2\al} \}$
and we can choose freely any 
$\theta$ satisfying

\noi
\begin{align}\label{th0}
0<  \theta <  \min\{  1, \theta_0  \}.
\end{align}
\noi
In particular, if $\theta_0<\frac{d+2\al}{2\al}$, then taking $\theta=\frac{2\theta_0 \al}{d+2\al}<\min\{\theta_0,1\}$, we obtain
\[
W_2 \Big(  \frac{ Y_t - \E[Y_t]}{\s_t},  \NN(0,1)  \Big) 
\les  \log^{-\theta_0\frac{d}{d+2\al}}(t)  .
\]

\smallskip
\noi
{\bf (2) [$\ASCLT$]} The family 
$\{(Y_t - \E[ Y_t] )/\s_t : t\geq t_0 \}$ satisfies an $\ASCLT$.

%
%

 \end{theorem}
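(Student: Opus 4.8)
The plan is to deduce the $\ASCLT$ from the quantitative CLT in part \textbf{(1)} together with a criterion of Ibragimov--Lifshits type, adapted to the continuous-time averaging $\nu_T^\omega = \frac{1}{\log T}\int_1^T \delta_{F_t(\omega)}\frac1t\,dt$. Writing $F_t = (Y_t - \E[Y_t])/\sigma_t$, by Remark \ref{equivdef} it suffices to show that for every bounded Lipschitz $g$,
\begin{equation} \notag
\frac{1}{\log T}\int_{t_0}^T g(F_t)\,\frac{dt}{t} \xrightarrow[\textup{a.s.}]{T\to\infty} \int_\R g\,d\gamma,
\end{equation}
where $\gamma$ is the standard Gaussian. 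Since part \textbf{(1)} already gives $F_t \Rightarrow \NN(0,1)$, and hence $\E[g(F_t)] \to \int g\,d\gamma$, the convergence of the \emph{expected} averages is automatic by Ces\`aro; so the real content is to control the fluctuation $\Delta_T := \frac{1}{\log T}\int_{t_0}^T \bigl(g(F_t) - \E[g(F_t)]\bigr)\frac{dt}{t}$ and show $\Delta_T \to 0$ a.s. The standard route is to estimate $\E[\Delta_T^2]$, show it decays like a negative power of $\log T$ (or at least summably fast along a geometric subsequence $T_n = e^{\sqrt n}$ or $T_n = 2^n$), apply Borel--Cantelli to get a.s.\ convergence along the subsequence, and then interpolate between consecutive subsequence points using monotonicity/continuity of $T\mapsto \int_{t_0}^T g(F_t)\frac{dt}{t}$ and boundedness of $g$.

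Expanding $\E[\Delta_T^2]$ produces a double integral $\frac{1}{\log^2 T}\iint_{[t_0,T]^2} \Cov\bigl(g(F_s),g(F_t)\bigr)\frac{ds\,dt}{st}$, so the crux is a \emph{covariance bound}: one needs $|\Cov(g(F_s),g(F_t))| \lesssim$ something that, after integrating $\frac{ds\,dt}{st}$ over $[t_0,T]^2$, is $o(\log^2 T)$ — ideally $\lesssim \log^2 T / \log^{\theta}(\cdot)$ for some $\theta>0$. The natural mechanism, following the Malliavin--Stein approach to the $\ASCLT$ (cf.\ \cite{BNT}), is to write $g(F_s)-\E[g(F_s)]$ via a Gaussian interpolation / integration-by-parts identity and bound the covariance by a quantity controlling the ``asymptotic independence'' of $F_s$ and $F_t$ when $s\ll t$. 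Here the key quantities are the mixed fourth-moment-type integrals $h_t(k_1,k_2)$ of \eqref{def_HT} and the associated $\xi_R(t)$ of \eqref{XIM0}: Condition \ref{cond8} supplies exactly the polynomial-in-$\log$ decay $\xi_R(t)\lesssim \log^{-\theta_0}(t)$ that should feed into a bound of the form $|\Cov(g(F_s),g(F_t))| \lesssim \xi_R(\max\{s,t\}) + (\text{cross term in } s/t)$, where the cross term measures decorrelation between the two distinct scales $s$ and $t$ (it should involve $\int_{sD - tD}|\cC|$-type integrals and decay as $s/t\to 0$ thanks to Condition \ref{cond5}). Because $g$ is only Lipschitz and not differentiable, one cannot differentiate $g(F_t)$ directly in the Malliavin sense; the standard trick is to smooth $g$ (convolve with a mollifier), apply the argument to the smooth version with quantitative dependence on the smoothing parameter, and optimize — or, alternatively, to use that $F_t$ lives in a fixed finite sum of Wiener chaoses only after the truncation $Y_{t,N_t}$ of \eqref{equ:YtN}, which again forces one to track the truncation error $\|F_t - F_{t,N_t}\|_{L^2}$ uniformly, exactly as in part \textbf{(1)}.

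Concretely I would proceed in the following steps. \textbf{Step 1:} Reduce to smooth bounded $g$ with bounded derivatives (mollification), and reduce, for each fixed truncation level $N$, to controlling $\Cov(g(F_{s,N}),g(F_{t,N}))$ plus an error controlled by the $L^2$-tail $\sup_t \|F_t - F_{t,N}\|_{L^2}$, which part \textbf{(1)}'s analysis already handles. \textbf{Step 2:} For the finitely-many-chaos object $F_{t,N}$, establish a quantitative covariance bound $|\Cov(g(F_{s,N}),g(F_{t,N}))| \lesssim_{g,N} \xi_R(t) + \rho(s/t)$ for $s\le t$, where $\rho(u)\to 0$ as $u\to 0$ is a power of $u$ coming from Condition \ref{cond5} applied to the ``off-diagonal'' pieces of the chaos product formula (this is the analogue of the bound $\E[\langle DF_n, DF_m\rangle^2]$-type estimates in \cite{BNT,BNT}, adapted to two scales rather than the diagonal). \textbf{Step 3:} Integrate: $\frac{1}{\log^2 T}\iint_{[t_0,T]^2}\bigl(\xi_R(t\vee s) + \rho(s\wedge s/t)\bigr)\frac{ds\,dt}{st}$; the $\xi_R$ part is $\lesssim \frac{1}{\log^2 T}\int_{t_0}^T \frac{dt}{t}\cdot\frac{C\log t}{\log^{\theta_0}t} \lesssim \log^{-(\theta_0 \wedge 1)}T \to 0$, and the $\rho$ part contributes $o(1)$ by a change of variables $u = s/t$. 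Hence $\E[\Delta_T^2]\to 0$, and along $T_n$ with $\log T_n = n$ it is summable, giving $\Delta_{T_n}\to 0$ a.s.; \textbf{Step 4:} upgrade to all $T$ by the sandwich argument using $\|g\|_\infty<\infty$ and $\log T_{n+1}/\log T_n \to 1$. \emph{The main obstacle} is Step 2 — obtaining a \emph{two-scale} decorrelation estimate for $\Cov(g(F_s),g(F_t))$ that is uniform enough to survive the double $\frac{ds\,dt}{st}$ integration — since the existing $\xi_R(t)$ only controls the ``same-scale'' fourth-moment quantity, and one must additionally show that when $s$ is much smaller than $t$ the domains $sD$ and $tD$ interact weakly (so that $F_s$ and $F_t$ are nearly independent), which is precisely where Condition \ref{cond5} and the growth of the domains must be exploited carefully; a secondary technical nuisance is handling the non-smoothness of $\varphi$ and of $g$ simultaneously, i.e.\ doing the mollification and the chaos truncation in a compatible way so that all error terms remain $o(1)$ after integration.
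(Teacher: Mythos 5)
Your overall strategy coincides with the paper's: truncate the chaos expansion at a $t$-dependent level $N_t\sim\log^\theta(t)$, reduce the $\ASCLT$ to the truncated functionals (Lemmas \ref{lem_red} and \ref{Red_step}), and then run an Ibragimov--Lifshits-type second-moment argument whose crux is a two-scale decorrelation estimate controlled by Malliavin--Stein quantities (Proposition \ref{prop210}). The only cosmetic difference is that you work with Lipschitz test functions and mollification, whereas the paper uses characteristic functions, bounding $\E[e^{is(F_{t_1}-F_{t_2})}-e^{-s^2}]$ by $d_{\rm TV}\big((F_{t_1}-F_{t_2})/\sqrt{2},Z\big)$ and then applying \eqref{MSbdd1}; this sidesteps any smoothing of $g$, since $1-\Var\big((F_{t_1}-F_{t_2})/\sqrt 2\big)=\Cov(F_{t_1},F_{t_2})$ makes the two-scale covariance appear automatically.

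There are, however, genuine gaps. The estimate you yourself flag as ``the main obstacle'' --- the two-scale bound $|\Cov(g(F_s),g(F_t))|\lesssim \xi_R(t)+\rho(s/t)$ --- is precisely the content of conditions {\bf (a)}--{\bf (b)} of Proposition \ref{prop210}, and you do not prove it; worse, the mechanism you propose for the decorrelation part ($\rho(s/t)\to 0$ ``thanks to Condition \ref{cond5}'') is insufficient. In the non-integrable cases (c2)--(c3) of Condition \ref{cond7} one has $\sigma_{t,N}^2\asymp t^d w_{t,\cdot}$ with $w_{t,\cdot}$ possibly diverging, and the bound $\Cov(F_{t_1},F_{t_2})\lesssim (t_1/t_2)^{\beta_1}$ requires the exact variance asymptotics of Lemma \ref{tech2}, the convolution estimate of Lemma \ref{tech3}, and Potter's bound for the regularly varying $w_{r,\cdot}$ --- that is, Condition \ref{cond7} is the essential input here, not Condition \ref{cond5} alone. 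A second issue is that your Step 2 constant ``$\lesssim_{g,N}$'' depends on the truncation level while $N=N_t\to\infty$; the argument only survives because the $N$-dependence is tracked explicitly (the sums $\sum_p 3^{2p}p!\,\M_p$ in Proposition \ref{MS_bdd} and the bound \eqref{MPT_bdd}), and this is exactly what forces the restriction $\theta<\min\{1,\theta_0\}$ in \eqref{th0} --- a constraint your sketch never produces. Finally, you take part {\bf (1)} as an input, whereas in the paper the QCLT is an \emph{output} of the same truncation argument (triangle inequality between $Y_t/\sigma_t$, $F_t$ and $\NN(0,1)$, using Lemma \ref{Red_step} and \eqref{th0b}); a complete proof of the theorem must establish it rather than assume it.
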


We will prove Theorem \ref{thm:main}
in Section \ref{SEC3_1}. We would like to point out that 
 we actually
have an explicit limiting order of $\s_t^2$,
which is the content of Lemma \ref{tech2}.

\medskip

  Our conditions could imply certain 
regularity of \eqref{def_Y}. In order to explore this fact, we need a bound for the
 moments of the covariance function of the underlying Gaussian field.
\begin{lemma}\label{tech1}
Let $C_1, C_2, \al, \dl, \eps_0$ be positive constants. 
Suppose the covariance function $\cC$ \eqref{cov_B} 
satisfies the following bounds:
\noi
\begin{align} \label{cond56}
| \cC(x) | \leq C_1 |x|^{-\dl}, \,\, \forall x\in\R^d
\quad
\text{and}
\quad
 1 - \cC(y) \ge C_2  |y|^\al 
 \,\,\text{for $|y| \leq \eps_0$.}
\end{align}
\noi
Then,  there exists $c>0$ such that 
$$
\int_{\R^d}|\cC(z)|^N dz 
\le\,c\, N^{-\frac{d}{\alpha}}
$$
for any integer $N \geq \frac{d}\dl +1$.  Moreover if we additionally assume that 
\begin{equation}\label{minore}
1-\cC(y) \le C_3 |y|^{\alpha}
\end{equation}
 for some $C_3>0$ and for $|y|\le \eps_0$, we have 
\begin{equation}\label{stima_momenti_gen}
\int_{\R^d}|\cC(z)|^N dz 
\asymp N^{-\frac{d}{\alpha}}.
\end{equation} 
\end{lemma}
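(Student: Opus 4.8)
The plan is to reduce the estimate on $\int_{\R^d}|\cC(z)|^N\,dz$ to two contributions: a small-ball part near the origin where the local lower bound $1-\cC(y)\ge C_2|y|^\al$ forces $|\cC(z)|^N$ to decay, and a tail part where $|\cC(z)|\le C_1|z|^{-\dl}$ gives integrability. First I would fix $\eps_0$ as in \eqref{cond56} and split
\[
\int_{\R^d}|\cC(z)|^N\,dz=\int_{|z|\le \eps_0}|\cC(z)|^N\,dz+\int_{|z|>\eps_0}|\cC(z)|^N\,dz.
\]
For the tail, since $|\cC|\le C_1|z|^{-\dl}$ and $|\cC|\le1$ (note $\cC(0)=1$ and $|\cC(x)|\le\cC(0)=1$ by positive definiteness, or simply from Condition \ref{cond5} for large $|x|$ together with continuity), one gets $|\cC(z)|^N\le (C_1|z|^{-\dl})^{N}$ for $|z|$ large; splitting further at some fixed radius $\rho$ chosen so that $C_1\rho^{-\dl}\le \tfrac12$, the region $|z|>\rho$ contributes at most $\int_{|z|>\rho}2^{-N}\,dz$ — wait, that diverges, so instead one keeps one factor of the power decay: $|\cC(z)|^N\le |\cC(z)|^{d/\dl+1}\cdot (C_1|z|^{-\dl})^{N-d/\dl-1}$ and uses that for $N\ge d/\dl+1$ the exponent $N$ is large enough that $\int_{|z|>\eps_0}|\cC(z)|^N\,dz\le \int_{|z|>\eps_0}(C_1|z|^{-\dl})^{N}\,dz$, which is finite and, being dominated by a geometric-type decay in $N$ (since $C_1 r^{-\dl}<1$ for $r$ past some radius, and the finitely many remaining annuli contribute $\le 1^N\cdot(\text{bounded volume})$), decays faster than any power of $N$; in particular it is $o(N^{-d/\al})$. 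So the tail is negligible and the main term is the small-ball integral.

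For the small-ball part, the key step is the local lower bound. On $|z|\le\eps_0$ we have $\cC(z)\le 1-C_2|z|^\al\le e^{-C_2|z|^\al}$, but we must also control $|\cC(z)|$ from below (i.e.\ rule out $\cC(z)$ close to $-1$); since $\cC$ is continuous with $\cC(0)=1$, shrinking $\eps_0$ if necessary we may assume $\cC(z)\ge 1/2>0$ for $|z|\le\eps_0$, so $|\cC(z)|^N=\cC(z)^N\le e^{-C_2 N|z|^\al}$. Then
\[
\int_{|z|\le\eps_0}|\cC(z)|^N\,dz\le\int_{\R^d}e^{-C_2 N|z|^\al}\,dz=\omega_{d-1}\int_0^\infty e^{-C_2 N r^\al}r^{d-1}\,dr,
\]
and the substitution $s=(C_2N)^{1/\al}r$ turns this into $(C_2N)^{-d/\al}\omega_{d-1}\int_0^\infty e^{-s^\al}s^{d-1}\,ds=c\,N^{-d/\al}$ with $c$ depending only on $d,\al,C_2$. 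This gives the upper bound $\int_{\R^d}|\cC(z)|^N\,dz\le c\,N^{-d/\al}$ for all integers $N\ge d/\dl+1$, as claimed.

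For the matching lower bound under the extra hypothesis \eqref{minore}, I would run the same small-ball argument in reverse: on $|z|\le\eps_0$, $\cC(z)\ge 1-C_3|z|^\al$, and for $|z|$ small enough (say $|z|\le\eps_1\le\eps_0$ with $C_3\eps_1^\al\le1/2$) we have $1-C_3|z|^\al\ge e^{-2C_3|z|^\al}$ using $1-u\ge e^{-2u}$ for $u\in[0,1/2]$; hence $|\cC(z)|^N\ge\cC(z)^N\ge e^{-2C_3 N|z|^\al}$ on that ball, and
\[
\int_{\R^d}|\cC(z)|^N\,dz\ge\int_{|z|\le\eps_1}e^{-2C_3N|z|^\al}\,dz=\omega_{d-1}\int_0^{\eps_1}e^{-2C_3Nr^\al}r^{d-1}\,dr.
\]
The same rescaling $s=(2C_3N)^{1/\al}r$ yields $(2C_3N)^{-d/\al}\omega_{d-1}\int_0^{(2C_3N)^{1/\al}\eps_1}e^{-s^\al}s^{d-1}\,ds$; for $N$ large the upper limit tends to infinity, so the integral converges to a positive constant, giving $\int_{\R^d}|\cC(z)|^N\,dz\ge c'N^{-d/\al}$ for all large $N$. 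Combined with the upper bound this is exactly \eqref{stima_momenti_gen}. The only mildly delicate point — the ``main obstacle'' such as it is — is the preliminary reduction ensuring $\cC>0$ (not merely $|\cC|\le1$) on a small ball, so that the elementary inequalities $1-C_2|z|^\al\le e^{-C_2|z|^\al}$ and $1-C_3|z|^\al\ge e^{-2C_3|z|^\al}$ can be applied to $\cC(z)$ itself rather than to $|\cC(z)|$; this follows from continuity of $\cC$ and $\cC(0)=1$ by shrinking $\eps_0$, and one should note that this shrinking is harmless because the discarded annulus $\eps_1<|z|\le\eps_0$ is absorbed into the super-polynomially small tail estimate above.
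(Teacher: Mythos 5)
Your small-ball analysis (the exponential substitution giving $\int_{\{|z|\le\eps\}}\cC^N \les N^{-d/\al}$, and its reverse for the lower bound under \eqref{minore}) is correct and essentially equivalent to the paper's Beta-function computation. The genuine gap is in the tail. After your split, the region $\eps_0<|z|\le \rho$ with $\rho\asymp C_1^{1/\dl}$ is controlled by neither hypothesis: there $C_1|z|^{-\dl}$ may exceed $1$, so the bound $|\cC(z)|^N\le (C_1|z|^{-\dl})^N$ is useless, and the only information left is $|\cC|\le 1$. Your own accounting concedes this --- ``the finitely many remaining annuli contribute $\le 1^N\cdot(\text{bounded volume})$'' --- but $1^N\cdot O(1)=O(1)$, which is \emph{not} $o(N^{-d/\al})$; a constant contribution from this annulus would destroy the upper bound $\les N^{-d/\al}$ entirely. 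So the claim that the tail ``decays faster than any power of $N$'' is not established, and the upper bound is not proved.

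Two repairs are possible. The paper's route is a doubling inequality for nonnegative positive-definite functions \cite{GT19}: taking $N'$ the largest even integer $\le N$, the function $\cC^{N'}$ is nonnegative and positive-definite, and $\int_{\{|x|\le K\}}\cC^{N'}\le C_0\int_{\{|x|\le\eps\}}\cC^{N'}$ with $C_0$ independent of $N$, which transfers the whole intermediate region onto the small ball where your exponential bound applies. The alternative (which the paper itself uses in Remark \ref{remModulo} for the signed version) is to show $\max_{\eps_0\le|z|\le \rho}|\cC(z)|=:\ta<1$, so the annulus contributes $\le \ta^N\cdot O(1)=o(N^{-d/\al})$; this requires continuity of $\cC$ (the standing assumption \eqref{C00}) \emph{and} the observation that $|\cC(x)|=1$ forces $x=0$ under the decay in \eqref{cond56} (if $\cC(x)=1$ then $B_x=B_0$ a.s., hence $\cC(nx)=1$ for all $n$, contradicting $\cC(nx)\to0$; the case $\cC(x)=-1$ reduces to this via $\cC(2x)=1$). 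Neither step appears in your proposal, and without one of them the argument does not close.
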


\noi
The proof of Lemma \ref{tech1} is technical, hence we postpone it to Section \ref{S5}. 
It is worth stressing that \eqref{stima_momenti_gen} holds  
with $|\cC^N|$ replaced by $\cC^N$ under the same assumptions \eqref{cond56}-\eqref{minore}, 
which are satisfied by Berry's random wave model; 
see Remark \ref{remModulo}. 
Moreover, 
Lemma \ref{tech1} ensures that 
the limiting variance $\sigma^2$ in Breuer-Major's theorem (see \eqref{varBM})
is strictly positive, as soon as $\varphi$ has an infinite chaos expansion,
provided the   assumptions \eqref{cond56}-\eqref{minore} hold. 
Note that obtaining the positivity of the limiting variance is often a difficult task
and we refer interested readers to the recent paper \cite{Gas25} by Gass,
who presents a  Fourier-type criterion 
based on spectral measure and its interplay with the chaotic components.

\begin{remark}[Malliavin regularity]\rm \label{rem_malliavin}
Under Conditions \ref{cond5} and \ref{cond6}, 
we have certain Malliavin differentiability for $Y_t$ (see \eqref{DYinf}). 
Actually,  Lemma \ref{tech1} indicates that 
$\int_{\R^d}|\cC^q(z)| dz \les q^{-d/\al}$,
so that 

 \noi
\begin{align}\label{stima_derivata}
\sum_{q=R}^\infty q^{\frac{d}{\al}} 
\E\bigg[ \bigg( \int_{tD} a_q H_q(B_x) dx \bigg)^2 \bigg]
\leq t^d \Vol(D)  \sum_{q=R}^\infty  a^2_q  q! q^{\frac{d}{\al}} 
 \int_{\R^d} |\cC^q(z)| dz < \infty,
\end{align}

\noi
In particular, if $d/\alpha \ge 1$, then $Y_t\in \mathbb D^{1,2}$ 
(i.e., $Y_t$ is Malliavin differentiable). 
In general, we observe an {\it inverse} relationship between the regularity of the field 
and the smoothness of the functional:
The parameter $\alpha$ restricts the maximal regularity of the sample paths
of the Gaussian field $\bfB$ (see, e.g., Example \ref{example_WM}), 
and as it gets smaller, we have faster decay 
in terms like \eqref{stima_momenti_gen}, which leads
to higher-order Malliavin differentiability of the integral function $Y_t$
in view of \eqref{stima_derivata}.
In this paper, we are not going to further explore 
the Malliavin differentiability
of $Y_t$,
since we do not need it in order to prove 
an $\ASCLT$ or 
a quantitative central limit theorem.
 We restrict ourselves to the case of Berry's random wave model 
 (Corollary \ref{corMalliavinBerry});
see \cite{Mai26} for further  discussions. Let us briefly mention that 
the author of \cite{Mai26} proved that   the excursion volume 
$Y \in \mathbb D^{k, 2}$
 if and only if $k < \frac{d}{\al} +\frac{1}{2}$. 
 See \eqref{DK2} for the definition of  $ \mathbb D^{k, 2}$.

 \end{remark}

The above remark ensures that our method leading to Theorem \ref{thm:main} 
allows us to prove $\ASCLT$ and quantitative central limit theorems 
even for functionals $Y_t(\varphi)$ 
with $\varphi$ merely in $L^2(\R, \frac{1}{\sqrt{2\pi}} e^{-x^2/2}dx)$
(i.e., no Malliavin differentiability like $\varphi(x) = \ind_{\{ x > 0\}}$).

 An important consequence of Lemma \ref{tech1} and 
 the asymptotic  \eqref{stima_momenti_gen}
 (more precisely the one in  Remark \ref{remModulo})
 is the following result that resolves an open question 
 on the Malliavin differentiability of the excursion volume of 
 Berry's random wave model.

 \begin{corollary}\label{corMalliavinBerry} 
 Let $\mathcal C$ be the covariance function of $d$-dimensional 
 Berry's random wave model 
 {\rm(}$d\ge 2${\rm)}, i.e., $\mathcal C(z)=b_d(|z|)$ as in \eqref{bd}, then 

 \noi
  \begin{align}\label{stima_momentiBessel}
 \int_{\mathbb R^d} \mathcal C(z)^q\, dz \asymp q^{-\frac{d}{2}}. 
 \end{align}
 
 \noi
As a consequence, the random variable $Y_t$ 
belongs to $\mathbb{D}^{1, 2}$, i.e., 
it is Malliavin differentiable. 
\end{corollary}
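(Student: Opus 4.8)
\textbf{Proof proposal for Corollary \ref{corMalliavinBerry}.}

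The plan is to apply Lemma \ref{tech1} to the covariance function $\cC(z) = b_d(|z|)$ of the $d$-dimensional Berry random wave model, verifying the hypotheses \eqref{cond56} and \eqref{minore}, and then invoke \eqref{stima_momenti_gen}. First I would check the decay assumption $|\cC(x)| \le C_1 |x|^{-\dl}$: from the large-argument asymptotics \eqref{Jdinf} of Bessel functions together with the explicit formula \eqref{bd}, one sees that $b_d(r) = O(r^{-(d-1)/2})$ as $r\to\infty$, so Condition \ref{cond5} holds with $\dl = (d-1)/2 > 0$ (the bound for small and moderate $r$ is immediate since $b_d$ is continuous and bounded, with $b_d(0)=1$). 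Next I would verify the two-sided local control near the origin: since $\bfB$ is isotropic and mean-square differentiable (Berry's field has a smooth spectral measure supported on the unit sphere, cf.\ \eqref{bd2}), a Taylor expansion gives $1 - \cC(y) = \tfrac12 |y|^2 (1 + o(1))$ as $|y|\to 0$, so both $1 - \cC(y) \ge C_2 |y|^2$ and $1 - \cC(y) \le C_3 |y|^2$ hold for $|y| \le \eps_0$ with $\al = 2$. One can also read this off directly: $b_d''(0) \ne 0$ because $\int_{|\xi|=1} (z\cdot\xi)^2\, d\xi > 0$ in \eqref{bd2}, differentiating twice under the integral sign.

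With $\al = 2$ and $\dl = (d-1)/2$ verified, Lemma \ref{tech1} (specifically \eqref{stima_momenti_gen}) yields $\int_{\R^d} |\cC(z)|^N\, dz \asymp N^{-d/\al} = N^{-d/2}$ for all integers $N \ge d/\dl + 1$, which gives \eqref{stima_momentiBessel}; as remarked after the statement of Lemma \ref{tech1} (see Remark \ref{remModulo}), the same estimate holds with $|\cC|^N$ replaced by $\cC^N$ under these hypotheses.

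Finally, for the Malliavin differentiability claim, I would use the computation in Remark \ref{rem_malliavin}: combining \eqref{stima_momentiBessel} with the bound \eqref{stima_derivata} and recalling that $d/\al = d/2 \ge 1$ for $d \ge 2$, we obtain
\[
\sum_{q=R}^\infty q \,\E\!\left[\left(\int_{tD} a_q H_q(B_x)\, dx\right)^{\!2}\right]
\le t^d \Vol(D) \sum_{q=R}^\infty a_q^2\, q!\, q^{d/2} \int_{\R^d} |\cC(z)|^q\, dz < \infty,
\]
where finiteness follows because $a_q^2 q!$ is summable (indeed $\sum_q a_q^2 q! = \E[\varphi(Z)^2] - \E[\varphi(Z)]^2 < \infty$ since $\varphi \in L^2(\R, e^{-x^2/2}dx)$) and $q^{d/2}\int_{\R^d}|\cC(z)|^q\,dz$ is bounded by \eqref{stima_momentiBessel}. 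This is exactly the criterion ensuring $Y_t \in \mathbb{D}^{1,2}$, so $Y_t$ is Malliavin differentiable.

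The main obstacle is the verification of the \emph{lower} bound $1 - \cC(y) \ge C_2 |y|^2$ uniformly on a neighborhood of the origin (hypothesis \eqref{cond56}), since the naive Taylor remainder estimate only gives this for $|y|$ sufficiently small in a way that a priori could depend on the oscillatory higher-order terms of the Bessel function; one needs to control $b_d$ carefully on the whole interval $[0,\eps_0]$ and ensure $b_d(r) < 1$ strictly for $0 < r \le \eps_0$, which follows from strict positive-definiteness of $\cC$ together with $|b_d(r)| < 1$ for $r > 0$ (a consequence of \eqref{bd2} and the fact that $e^{iz\cdot\xi}$ is not $\PP$-a.s.\ constant in $\xi$ on the sphere for $z \ne 0$). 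The decay exponent $\dl = (d-1)/2$ also needs $d \ge 2$ to be positive, which is exactly the standing assumption.
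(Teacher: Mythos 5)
Your proposal is correct and follows essentially the same route as the paper: verify Condition \ref{cond5} with $\dl=(d-1)/2$ via \eqref{Jdinf}, verify Condition \ref{cond6} and \eqref{minore} with $\al=2$ from the small-argument expansion of the Bessel function, apply Lemma \ref{tech1} together with Remark \ref{remModulo}, and conclude via \eqref{stima_derivata}. The only slip is the local constant: the expansion \eqref{Jd0} gives $1-\cC(y)=\tfrac{|y|^2}{2d}(1+o(1))$ rather than $\tfrac12|y|^2(1+o(1))$, which is immaterial for the argument.
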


\noi
 In particular, the excursion volume $Y_t(\varphi)$ 
 with $\varphi(r)=\ind_{\{ r \geq u\}}$ 
 of Berry's model  is regular in the Malliavin sense, 
 a question left open so far; 
 see, e.g., \cite{CGR24} for the case of smooth statistics, 
 i.e., for \emph{smooth} $\varphi$. 
 Let us now focus on \eqref{stima_momentiBessel}:
  in \cite{FCMT19} some numerical simulations  
  show that the moments of $J_0(z)=b_2(|z|)$
 asymptotically behave as $2/q$, 
 our estimate hence confirms this conjectured behavior; 
 see equation (39) therein. We would also like to mention a result 
 in \cite{GMT24}:  the $q$-th moment of the Bessel function is 
 strictly positive for every $q$.

\bigskip

Before we state corollaries of our main theorem, 
let us present a few examples of covariance functions
that satisfy Conditions \ref{cond5}-\ref{cond8}.

\begin{example} \label{example_WM} \rm
It is known \cite{Gne02} that if $\bfB$ is nondegenerate, isotropic, and  
$\cC(x)= 1-a|x|^{\alpha} + o(|x|^{\alpha})$ 
as $|x|\to 0$ for some $\alpha>0$, 
then  $\al\in (0,2]$.
 For instance, 
\begin{equation}\label{cov_exp}
     \text{$\cC(x) = e^{-|x|^\alpha}$ with  $\al \in (0,2]$}
\end{equation}

\noi
satisfies Condition \ref{cond6}, 
so do  the Whittle-Mat\'ern family of covariance functions

\noi
\begin{align}\label{cov_WM}
    \text{$\cC(x) = \frac{2^{1-\mu}}{\Gamma(\mu)} |x|^\mu K_\mu(|x|)$
    with $\mu \in (0,+\infty)$ and $\al = 2 \min\{ \mu, 1\}$},
\end{align}

 \noi
 where $K_\mu$ is the modified Bessel function 
 (of the second kind) with index $\mu$. Since 
\[
  \text{$K_\mu(r) = e^{-r}\sqrt{\frac{2}{\pi r}} \big( 1  + O\left( r^{-1}\right )\big)$ as $r\to +\infty$},
\]

\noi 
in particular the covariances \eqref{cov_exp} 
and \eqref{cov_WM} satisfy also Condition \ref{cond5}, 
are in $L^p(\mathbb R^d)$ for every $p>0$ 
and satisfy also Conditions \ref{cond7}-\ref{cond8} 
(by the positivity of $K_\mu$ and Lemma \ref{tech4}).
These models allow one to the control of the field's sample path regularity
 by tuning the parameter $\al$. 
Specifically, a smaller value of $\al$ corresponds to 
a higher degree of roughness in the trajectories, 
whereas $\alpha=2$ represents the smoothest case.
 Geometrically, this fine-tuning implies that we can treat excursion sets 
 whose boundaries have specific regularity.

\end{example}

\begin{corollary}[$\ASCLT$ in the Breuer-Major setting]
\label{cor:ASBM}
Let $Y_t$ be defined as in \eqref{def_Y}.
Assume that the following conditions hold:
\begin{itemize}
\item  $\varphi-\E[\varphi(Z)]$, with $Z\sim\NN(0,1)$, is not odd, 
\item Condition \ref{cond5} holds with $\delta>d/R$,  
\item Condition \ref{cond6} holds.

\end{itemize}

\noi
Then,  $\Big\{ \dfrac{Y_t - \E[ Y_t]}{\sqrt{\Var(Y_t)}} :
t \geq 1  \Big\}$ 
satisfies an $\ASCLT$.

\end{corollary}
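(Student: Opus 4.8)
The plan is to show that Corollary \ref{cor:ASBM} is a special case of Theorem \ref{thm:main}, so that everything reduces to checking the four standing assumptions (Conditions \ref{cond5}--\ref{cond8}) under the hypotheses listed. Conditions \ref{cond5} and \ref{cond6} are assumed outright, with the extra quantitative requirement that $\dl > d/R$ in Condition \ref{cond5}; note this forces $\cC \in L^{m}(\R^d)$ for every $m > d/\dl$, and in particular $\cC \in L^R(\R^d)$, which is exactly the Breuer-Major short-range regime of Theorem \ref{recap}-(i). Since $\varphi - \E[\varphi(Z)]$ is assumed not to be odd, the first alternative (c1) of Condition \ref{cond7} holds immediately, so Condition \ref{cond7} is satisfied without further work. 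It therefore only remains to verify Condition \ref{cond8}, i.e.\ the logarithmic decay \eqref{xiLeo} of $\xi_R(t)$.

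For Condition \ref{cond8} I would invoke Lemma \ref{tech4} (the lemma asserting that, under Conditions \ref{cond5} and \ref{cond7}, $\xi_m(t)$ has \emph{power} decay in $t$), which is stated earlier in the excerpt and which we are entitled to use. Power decay $\xi_R(t) \les t^{-c}$ for some $c > 0$ trivially dominates $C/\log^{\theta_0}(t)$ for any $\theta_0 > 0$ and all large $t$; so Condition \ref{cond8} holds, and in fact one may take $\theta_0$ arbitrarily large. Here one should keep track of the bookkeeping: in the short-range case $\sigma_t^2 \asymp t^d$ by \eqref{varBM} (the limiting constant $\sigma^2$ being strictly positive precisely because (c1) ensures some even coefficient $a_q$ with $q$ even is nonzero, or more simply because the proof in the $L^R$ non-odd case gives $\sigma^2 > 0$), while $h_t(k_1,k_2)$ in \eqref{def_HT} is an integral over $(tD)^4$ of a product of four shifted powers of $\cC$, each integrable, so a change of variables and Young-type estimate bounds $h_t(k_1,k_2)$ by $t^d$ times a quantity that decays in $k_1 + k_2$; dividing by $\sigma_t^4 \asymp t^{2d}$ and taking the supremum over $k_1 + k_2 \ge R$ yields the claimed power decay in $t$. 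All of this is precisely the content of Lemma \ref{tech4}, so no independent computation is needed.

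With all four conditions verified, Theorem \ref{thm:main}-\textbf{(2)} applies and gives the $\ASCLT$ for $\{(Y_t - \E[Y_t])/\sigma_t : t \ge t_0\}$; by the convention in Remark \ref{equivdef} this is exactly the asserted $\ASCLT$ for $\{(Y_t - \E[Y_t])/\sqrt{\Var(Y_t)} : t \ge 1\}$, completing the proof.

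The only genuine subtlety --- what I would flag as the main obstacle --- is making sure the hypotheses of Lemma \ref{tech4} are actually met, since that lemma presupposes Condition \ref{cond7}, and here Condition \ref{cond7} is being satisfied through alternative (c1) rather than (c2) or (c3); one must check that the proof of Lemma \ref{tech4} (or the relevant sub-case of it) covers the (c1) scenario, i.e.\ that the power decay of $\xi_R(t)$ in the globally integrable case $\cC \in L^R$ does not secretly rely on positivity of $\cC^R$ or on regular variation. In the short-range regime this is standard (it is essentially the estimate underlying the quantitative Breuer-Major theorem), but it is the one place where the reduction is not purely formal and deserves an explicit sentence in the write-up.
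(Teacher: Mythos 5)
Your proposal is correct and follows essentially the same route as the paper: check Conditions \ref{cond5}--\ref{cond8} (with alternative (c1) of Condition \ref{cond7} supplied by $\cC\in L^R(\R^d)$ and non-oddness, and Condition \ref{cond8} supplied by the power decay of Lemma \ref{tech4}) and then invoke Theorem \ref{thm:main}-\textbf{(2)}. The only point worth stating explicitly is that Lemma \ref{tech4} controls $\xi_m(t)$ only for $m>d/\dl$, so taking $m=R$ is legitimate precisely because $\dl>d/R$ --- this is where the quantitative hypothesis enters --- and your worry about the (c1) scenario is moot, since Lemma \ref{tech4} is stated and proved (via Lemma \ref{tech2}) under any of the three alternatives of Condition \ref{cond7}.
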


The proof of Corollary \ref{cor:ASBM} 
will be given in Section \ref{SEC3_2}. 
Note that  Condition \ref{cond5} with $\dl>d/R$
 implies $\cC\in L^R(\R^d)$ and $\varphi-\E[\varphi(Z)]$ non-odd implies
  $\s^2>0$ as in \eqref{varBM}. Thus, by Breuer-Major theorem,
   we immediately obtain a CLT for $Y_t$.
   The above Corollary \ref{cor:ASBM}
   indicates that  under the additional 
   Condition \ref{cond6}, we also obtain an $\ASCLT$.

\begin{corollary}[$\ASCLT$ for Berry's random wave model]
\label{cor:berry} 

Let  $\bfB$ be $d$-dimensional 
Berry's random wave model with   $d\ge2$.
Let $\varphi$ be
given as in \eqref{HerExp}, with Hermite rank $R\geq 1$
and $Y_t$ be defined as in \eqref{def_Y}.
Exclude the cases in  \eqref{excases} 
and suppose that $D$ is a ball centered at $0$. 
Then, 
$\Big\{ \dfrac{Y_t - \E[ Y_t]}{\sqrt{\Var(Y_t)}} :
t \geq 1  \Big\}$
satisfies the $\ASCLT$.
\end{corollary}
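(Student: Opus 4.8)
The plan is to derive Corollary \ref{cor:berry} as a direct application of Theorem \ref{thm:main} by checking that Berry's random wave model satisfies Conditions \ref{cond5}--\ref{cond8} (outside the excluded cases \eqref{excases}). First I would record that $\cC(z)=b_d(|z|)$ with $b_d$ as in \eqref{bd}: from the large-argument asymptotics \eqref{Jdinf} of Bessel functions we get $|\cC(x)| = |b_d(|x|)| \les |x|^{-(d-1)/2}$, so Condition \ref{cond5} holds with $\dl = (d-1)/2 > 0$. For Condition \ref{cond6}, Berry's field is nondegenerate, isotropic and mean-square differentiable (the spectral measure is the uniform measure on the unit sphere, which has a finite second moment), hence $\cC(x) = 1 - c|x|^2 + o(|x|^2)$ as $|x|\to 0$; in particular $\cC(x) \le 1 - C_2|x|^2$ for $|x|$ small, so Condition \ref{cond6} holds with $\al = 2$. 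Moreover the two-sided bound $1-\cC(y)\asymp|y|^2$ holds near $0$, so Lemma \ref{tech1} applies and gives $\int_{\R^d}|\cC(z)|^q\,dz \asymp q^{-d/2}$ (this is also Corollary \ref{corMalliavinBerry}).

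Next I would verify Condition \ref{cond7}. Here the argument splits according to the (second) Hermite rank, mirroring the discussion in Section \ref{S1_2}. If $\cC\in L^R(\R^d)$, i.e., when $R \ge 3$ (or $R\ge2$ and $d\ge 3$, more precisely whenever $d/\dl = 2d/(d-1) < R$), then since $\varphi$ is not among the excluded cases it is in particular not odd after centering when we are in the short-range regime, so (c1) applies; alternatively, since $b_d$ changes sign, one checks $\cC^R\ge 0$ fails, but in the $L^R$ regime (c1) suffices and when $R$ is even $\cC^R\ge 0$ trivially gives (c2). When the variance is dominated by a finite number of low-order chaoses --- the cases $a_2\neq 0$ (so $R=2$, or $R=1,R'=2$), $(d,R)=(2,4)$, or $(d,R,R')=(2,3,4)$ --- one uses (c2): by the explicit computations leading to \eqref{lab1} and \eqref{varBerry}, $w_{r,R}$ (or $w_{r,M}$ for suitable $M$) behaves like a power of $r$ times a slowly varying function, hence is regularly varying at infinity; when $R=1$ one works with $\varphi - a_1 H_1$ as noted below \eqref{lab1}. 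Alternatively, since $D$ is a centered ball, one can always fall back on (c3) with $M > d/\dl - 1 = (d+1)/(d-1)$: by Lemma \ref{tech1} the tail of $\sum_{q\le M} q! a_q^2 \int |\cC|^q$ converges, and using \eqref{stima_momenti_gen} together with the sign information from \eqref{stima_momentiBessel} (the version with $\cC^N$ rather than $|\cC^N|$, cf.\ Remark \ref{remModulo}) one gets $w_{r,M}\to w_{\infty,M}\in(0,\infty]$ with $w_{r,M}$ regularly varying (in fact convergent). I would present (c3) as the uniform fallback and only invoke (c1)/(c2) where it streamlines matters.

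Then I would check Condition \ref{cond8} via Lemma \ref{tech4}: one shows $\xi_R(t) \les \log^{-1/2}(t)$, i.e., $\theta_0 = 1/2$. The point is that $h_t(k_1,k_2)$ in \eqref{def_HT} is an integral of a product of four powers of $\cC$ over $(tD)^4$; bounding one factor $\cC^{k_2}(x-z)$ and $\cC^{k_2}(y-w)$ in sup norm on the region where the arguments are large and using the power decay $\int_{\R^d}|\cC|^{k}\asymp k^{-d/2}$ from Lemma \ref{tech1} for the remaining factors, one obtains (after dividing by $\sigma_t^2$, whose order is known from \eqref{varBerry}) a bound that decays like a negative power of $\max(k_1,k_2)$, uniformly over $k_1+k_2\ge R$, and a geometric-type decay when the arguments stay near zero; combining these yields the $\log^{-1/2}(t)$ rate as in the cited proof of Corollary \ref{corMalliavinBerry}. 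With all four conditions in hand, Theorem \ref{thm:main}-{\bf (2)} applies verbatim and gives the $\ASCLT$ for $\{(Y_t-\E[Y_t])/\sigma_t : t\ge t_0\}$, which by the convention in Remark \ref{equivdef} is stated as the $\ASCLT$ for $t\ge 1$.

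The main obstacle is the case analysis for Condition \ref{cond7}: Berry's covariance is oscillatory and not in $L^R$ for small $R$, so one genuinely has to treat the low-rank, finite-dominant-chaos regimes by hand, extracting regular variation of $w_{r,M}$ from the Bessel-function computations behind \eqref{lab1}. The cleanest route is probably to always use (c3) --- available because $D$ is a centered ball --- reducing everything to showing $w_{r,M}$ converges for $M$ large, which follows from the absolute convergence $\sum_{q=R}^M q!a_q^2\int_{\R^d}|\cC|^q\,dz<\infty$ guaranteed by Lemma \ref{tech1} (using $M > (d+1)/(d-1)$), together with the sign refinement in Remark \ref{remModulo} ensuring $w_{\infty,M}>0$. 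The verification of Conditions \ref{cond5}, \ref{cond6}, \ref{cond8} is then essentially bookkeeping with \eqref{Jdinf} and Lemmas \ref{tech1} and \ref{tech4}.
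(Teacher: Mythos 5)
Your overall strategy (verify Conditions \ref{cond5}--\ref{cond8} and invoke Theorem \ref{thm:main}) is the paper's, and your treatment of Conditions \ref{cond5} and \ref{cond6} is correct. But your verification of Condition \ref{cond7} has genuine gaps. First, the claim that outside the excluded cases \eqref{excases} the function $\varphi-\E[\varphi(Z)]$ is ``in particular not odd'' in the $L^R$ regime is false: take $\varphi=H_5$ with $d=2$, which is not excluded, has $\cC\in L^5(\R^2)$, and is odd, so (c1) does not apply. Second, your ``cleanest route'' via (c3) rests on the assertion that $w_{r,M}$ converges because $\sum_{q=R}^M q!a_q^2\int_{\R^d}|\cC|^q\,dz<\infty$; this fails for exactly the delicate exponents. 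For $d=2$ one has $|\cC(x)|\asymp|x|^{-1/2}$ at infinity, so $\int_{\R^2}|\cC|^q=+\infty$ for $q\le 4$, and indeed $w_{r,M}$ \emph{diverges} (like $r$ when $a_2\neq0$, like $\log r$ when $d=2$, $a_4\neq0$). Lemma \ref{tech1} only controls $q\ge d/\dl+1$, which never reaches $q=2,3,4$ for $\dl=(d-1)/2$. What (c3) actually requires, and what the paper proves as the claim \eqref{claim_wrm}, is that $w_{t,M}$ is \emph{regularly varying} with $w_{\infty,M}\in(0,\infty]$; this needs the explicit Bessel computations $v_{2,t}\sim c_d t$ and $v_{4,t}\sim c_d'\log t$ (for $d=2$), plus the nontrivial fact from \cite{GMT24} that $\int_{\{|x|\le t\}}\cC^q\,dx$ converges to a \emph{positive} finite limit for $q\ge3$, $(d,q)\neq(2,4)$ --- a conditional-convergence statement relying on oscillatory cancellation, not on absolute integrability.

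The verification of Condition \ref{cond8} is also not as routine as you suggest. Lemma \ref{tech4} only controls $\xi_m(t)$ for $m>d/\dl=2d/(d-1)$, i.e.\ $m\ge5$ when $d=2$, so the contractions $h_t(1,1)$, $h_t(1,2)$, $h_t(1,3)$, $h_t(2,2)$ are precisely the ones \emph{not} covered, and a crude ``sup-norm plus power decay of $\int|\cC|^k$'' argument does not yield the needed $\log^{-1/2}(t)$ bound for, say, $h_t(2,2)/\s_t^2[4]^2$ in the case $(d,R)=(2,4)$. The paper handles these by the decomposition \eqref{addends}, a case analysis on the vanishing of $a_2,a_3,a_4$ (which is exactly where the exclusions \eqref{excases} enter: e.g.\ $a_2=0$, $a_3\neq0=a_4$, $d=2$ cannot be controlled and is excluded), and the hard spectral estimates of \cite[Propositions 3.3 and 4.1]{MN23} giving \eqref{boundcontractionberry} and \eqref{Over2}. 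Your citation of the proof of Corollary \ref{corMalliavinBerry} for this rate is a mis-attribution: that proof concerns moments of Bessel functions and Malliavin differentiability, not contraction bounds. Finally, the paper first disposes of the first chaos (reducing to $R\ge2$ via \eqref{lab1} and Lemma \ref{lem_red}); you should make that reduction explicit rather than folding it into the (c2) discussion.
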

 
 The above Corollary \ref{cor:berry} presents the very first 
$\ASCLT$ result in the context of Berry's random wave model. 
 
  \subsection{Further comments}

  Now let us explain our conditions   and highlight some consequences.

\begin{remark}[On Conditions \ref{cond5} and \ref{cond6}]\rm \label{rem4_56}
(i)
Clearly, Condition \ref{cond5} refers to integrability   of 
$\cC$ on $\R^d$, i.e., under Condition \ref{cond5},
$\cC\in L^{m}(\R^d)$ for any $m > d/\dl$. 
An assumption of this kind is quite common  
for obtaining  limit theorems for functionals of random fields, 
as it allows one to easily control the dependency structure 
of   $\varphi(\bfB) = \{ \varphi(B_x): x\in\R^d\}$.

\smallskip
\noi
(ii)
The technical Condition \ref{cond6} is   
  satisfied by ``most'' Gaussian fields 
(for instance, if $\bfB$ is not degenerate, isotropic,
 and   mean square differentiable, then Condition \ref{cond6} holds true 
with $\al=2$) 
-- except  in pathological cases.\footnote{Here is a 
pathological example:
if  $\varphi(r)=r$ and $\cC\equiv 1$,  
then we get a degenerate Gaussian field 
$Y_t = t^d \textnormal{Vol}(D) B_0$,
and thus  a central limit theorem trivially holds   for $ \{ Y_t\}$,
 but the $\ASCLT$   clearly does not hold true.}
Note that  Condition \ref{cond6}, together with Condition \ref{cond5},
will ensure a power decay of $\int_{\R^d} |\cC(z)|^q dz$ 
as  $q\to+\infty$;
see Lemma \ref{tech1}.

\end{remark}

\begin{remark}[On Conditions \ref{cond7} and \ref{cond8}] \rm \label{rem4_78}

(i) Condition \ref{cond7} is a technical condition 
for estimating the variances of $Y_t$ and its truncated version
$Y_{t, N_t}$  defined in \eqref{equ:YtN};
see Lemmas \ref{tech2}-\ref{tech3}.
Note that getting the order of the limiting variances is usually 
the first step to prove a limit theorem.

\smallskip
\noi
(ii) 
The ``compact'' assumption $\cC^R \ge 0$ in (c2) of Condition \ref{cond7}
 on one hand would imply $\s^2>0$ in \eqref{varBM} when $\cC\in L^R$, 
 and on the other hand would be needed to handle the case in Theorem \ref{recap}-(ii) when $\cC\notin L^R$. 
As indicated in Theorem \ref{recap}-(ii),
  the assumption $\cC^R\ge 0$  is not optimal: 
    for Berry's random wave model (see Section \ref{S1_2}),
   it is possible to establish the asymptotic normality for $Y_t$
   when $R=1$ and
    the second chaotic component  is dominant.

\smallskip
\noi
(iii)  Condition \ref{cond8}  is related to a uniform control 
of the contractions (see Section \ref{SEC_MS}), 
arising from the computation of  
the fourth moment of 
each chaotic component of $Y_t$ in \eqref{CP0};
see \eqref{GTQ_r2}.
 As already mentioned in Remark \ref{rem4_56}-(i), Condition \ref{cond5}
implies $\cC\in L^{m}(\R^d)$ for $m > d/\dl$.
As we will see in Lemma \ref{tech4}, $\xi_m(t)$ has power decay in $t$
under the additional  Condition \ref{cond7},
and 
the combination of Condition \ref{cond5} and Condition \ref{cond7}
leads to a control on the sum of  higher-order chaoses (see, e.g., 
 \eqref{RMB}-\eqref{MPT_bdd}),
while the control for lower-order chaoses is guaranteed 
by Condition \ref{cond8}.

\end{remark}

Let us now comment a bit on the quantitative CLT in 
 Theorem \ref{thm:main}-{\bf (1)} and the restriction 
 \eqref{th0} on $\theta$.
 
\begin{remark} \rm
\label{rema1}

(i)  It is well known that the rate of convergence in 
Theorem \ref{thm:main}-{\bf (1)} is  not  optimal, both in cases like (i) 
(see, e.g., \cite[Theorem 1.2]{KN19}) 
and in cases like (ii), where  a finite number of chaotic components of 
$Y_t$ are  dominant.
 Indeed,  recalling \eqref{equ:YtN}, 
 if   $\Var(Y_{t,M})\sim \Var(Y_t) = \s_t^2$  for some fixed $M$ (i.e., 
 the first $M$ chaoses are dominant),
 one could proceed with the usual triangle inequality to get 
     \[
     W_2\Big(\frac{Y_t - \E[Y_t]}{ \sqrt{ \Var(Y_{t})}}, \NN(0,1) \Big)
    \les 
  W_2 \Big(\frac{Y_{t, M}}{\sqrt{\Var(Y_{t, M})}  }, \NN(0,1) \Big)
   + \sqrt{  1 -  \frac{\Var(Y_{t, M})}{\Var(Y_{t})}      },
    \]
 
 \noi
where the second term   vanishes as $t\to\infty$.
In this case,  we do not have to choose $M = N_t \uparrow +\infty$, 
as $t\to+\infty$. For example, 
when  $\bfB$ is  Berry's random wave model, 
excluding the cases in \eqref{excases}, 
Conditions \ref{cond6}-\ref{cond8} hold with 
$\al=2$, $\theta_0=1/2$
(see the  proof of Corollary \ref{cor:berry}  and of Corollary \ref{corMalliavinBerry}),
and thus  Theorem \ref{thm:main}-{\bf (1)} yields
(for $d=2$):

\noi
\begin{align*}
      W_2 \Big( \frac{Y_t - \E[Y_t]}{\s_t} ,  \NN(0,1) \Big) 
    \les    \log^{- \frac{1}{6}}(t).
\end{align*}

\noi
   However, if $a_2\neq0$, then choosing $M=2$,  \eqref{boundcontractionberry} and \eqref{varBerry} yield
    \[
   W_2 \Big(\frac{Y_t - \E[Y_t]}{\s_t} ,Z\Big)\lesssim \frac{1}{t^{1/4}}+\sqrt{\frac{\log(t)}{t}} 
    \les \frac{1}{t^{1/4}} .
    \]

\smallskip
\noi
(ii) As already mentioned, we will  let $N = N_t$ depend on $t$
as  in \eqref{equ:YtN}. 
 More precisely, we let $N_t$ be an integer-valued function that
is non-decreasing in $t$ with

\noi
\begin{equation}
\label{def_NT}
    \qquad N_t \sim \log^{\theta}(t),
\end{equation}

\noi
where $\theta$ satisfies the restriction  \eqref{th0}. 
It is worth noting that the smaller the value of $\alpha$ is (i.e., the rougher the field), the larger $\theta$ becomes.
Let us point out that this restriction  \eqref{th0}
will be combined 
with the bound \eqref{MPT_bdd},
 to control the expressions in 
\eqref{deda1} and \eqref{th0b}.

%
%

\end{remark}

 \noi
$\bul$
{\bf Organization of this paper.}  The rest of this paper is organized as follows:
Section \ref{S2} presents a few preliminaries on 
the theory of regular varying functions, Malliavin-Stein method,
and auxiliary results on $\ASCLT$ that are needed for our proofs.
We will prove our main results in Section \ref{SEC3}
and provide several key technical results in Section \ref{S5}.

\bigskip
   \noi
$\bul$
{\bf  Acknowledgments.} The authors would like to thank the an  anonymous referee
for the helpful comments that improved the quality of the paper.
The authors would also like to thank J\"urgen Angst and Guillaume Poly 
for organizing the 2023 conference ``Random nodal domains'',
during which this project was initiated.  
L.M. is associated with INdAM (Istituto Nazionale di Alta Matematica ``Francesco Severi'')
and the GNAMPA group. L.M. acknowledges financial support from the MUR 2022 PRIN project
GRAFIA (project code: 202284Z9E4) and the MUR Excellence Department
Project MatMod@TOV, awarded to the Department of Mathematics, University of Rome Tor Vergata
(CUP E83C18000100006).
M.R. would like to acknowledge support from the PRIN/MUR project 2022 GRAFIA 
and the GNAMPA-INdAM project 2024 Geometria di onde aleatorie su variet\`a.

\section{Preliminaries} \label{S2}
We collect a few preliminaries in this section. 
Section \ref{SEC_RV} contains basics on functions of regular variation,
Section \ref{SEC_MS} presents a brief introduction to our toolbox 
(i.e., the Malliavin-Stein method),
and Section \ref{SEC_aux} collects a few auxiliary 
results on $\ASCLT$

\subsection{Functions of regular variation} \label{SEC_RV}
We say a   measurable function $L: \R_+\to \R$ is slowly varying at infinity
if $L$ is positive on $[X, \infty)$ for some $X > 0$ and 
\[
\lim_{t\to+\infty} \frac{L(\lambda t)}{L(t) } = 1
\]
 for any $\lambda > 0$.
 We say $f: \R_+\to\R$ is regularly varying at infinity with index $\rho\in\R$,
 if $f(t) = t^\rho L(t)$ for some slowly varying function $L$.
 And it is a well known fact that 
 for  $L:\R_+\to\R$ slowly varying at infinity, one can represent it as 
 follows: there is some $a > 0$ such that

 \noi
\begin{align}\label{SV_rep}
L(t) = c(t) \exp\bigg( \int_a^t \frac{\eps(u)}{u} du \bigg)
\end{align}
with $c(t)\to c \in(0,\infty)$ and $\eps(u)\to 0$ as $t, u\to+\infty$;
see \cite[Equation (1.5.1)]{BGT87}.
For example, we can represent $\log(t)$ as follows:
\[
\log(t)  = \exp\bigg( \int_1^t \frac{1}{u \log u} du \bigg).
\]
We refer readers to the classic book \cite{BGT87} for general theory
on functions of regular variations. 
In what follows, we collect a few results that are needed for our proofs.

\begin{lemma}\label{lemP}
 
 \textup{(i) (Asymptotic behavior at infinity)}  
 Suppose $L:\R_+\to\R$ is slowly varying at infinity. 
 Then, for any $\eps > 0$, we have 
 \begin{align}\label{asy1}
  L(t) t^{-\eps} \xrightarrow{t\to\infty} 0
\quad
  {\rm and}
  \quad
    L(t) t^{\eps} \xrightarrow{t\to\infty} +\infty.
  \end{align}
  In particular, $L$ is locally bounded on $[X, \infty)$ for some $X>0$.

\smallskip

\noi
   \textup{(ii) (Index of regular variation)}  
Assume that  $f$ is regularly varying at infinity with index $\rho$
such that  $f(\infty) = \lim_{t\to\infty} f(t)$ exists in $(0,\infty]$.
Then,   $\rho \geq 0$.

\smallskip

\noi
\textup{(iii) (Integral of regularly varying functions)}   
Suppose  $L:\R_+\to \R$  is slowly varying at infinity  
and let   $\rho \in\R$.
Then, we have, with $X$ as in {\rm (i)}, that
 for any $\s \geq  - (\rho +1)$,
\[
\frac{x^{\s+ \rho + 1} 
   L(x) }{\int_X^x t^{\s +\rho} L(t) dt} \xrightarrow{x\to+\infty} \s + \rho +1.
\] 
Moreover,   
$x\in\R_+\mapsto \int_X^x t^{\s+\rho} L(t) dt$ is regularly varying with 
index $\s+\rho+1$ {\rm(}also in the case $\s+\rho+1=0$, where it is in particular slowly varying{\rm)}.

\smallskip

\noi
\textup{(iv) (Potter's bound)}
Let $f$ be  regularly varying   at $+\infty$
with index $\rho$.  
Then, for any $\dl > 0$ and for any $A>1$, 
there exists some 
constant $X = X(A, \dl)$ such that 
\[
 \tfrac{1}{A} \min\big\{  (y/x)^{\rho+\dl},    (y/x)^{\rho-\dl} \big\}       \leq    \frac{f(y)}{f(x)} \leq A \max\big\{  (y/x)^{\rho+\dl},    (y/x)^{\rho-\dl} \big\}   
\]
for any $x, y\in [ X, \infty)$.

\end{lemma}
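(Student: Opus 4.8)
The plan is to treat the four parts in order, since each one is essentially a standard fact from the Karamata theory of regular variation and can be reduced to the Karamata representation \eqref{SV_rep}. For part (i), write $L(t) = c(t)\exp\big(\int_a^t \tfrac{\eps(u)}{u}\,du\big)$ with $c(t)\to c\in(0,\infty)$ and $\eps(u)\to 0$. Given $\eps>0$, choose $u_0$ so that $|\eps(u)|<\eps/2$ for $u\ge u_0$; then for $t\ge u_0$ we have $\int_a^t \tfrac{\eps(u)}{u}\,du \le K + \tfrac{\eps}{2}\log t$ for a constant $K$, hence $L(t)t^{-\eps} \le (\sup c)\, e^{K} t^{-\eps/2}\to 0$, and symmetrically $L(t)t^{\eps}\to\infty$. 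Local boundedness on $[X,\infty)$ then follows since $c$ is bounded near infinity and the exponential factor is continuous and finite; alternatively one invokes the measurability-plus-local-boundedness part of the Karamata representation theorem directly from \cite{BGT87}.

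For part (ii), suppose $f(t)=t^\rho L(t)$ with $f(\infty)=\lim_{t\to\infty}f(t)\in(0,\infty]$. If $\rho<0$, then by part (i) (with $\eps = -\rho/2>0$) we get $f(t) = t^{\rho/2}\cdot\big(t^{\rho/2}L(t)\big) \le t^{\rho/2}\cdot\big(L(t)t^{\rho/2}\big)$, and since $t^{\rho/2}\to 0$ while $L(t)t^{\rho/2}=L(t)t^{-|\rho|/2}\to 0$ as well by part (i), we get $f(t)\to 0$, contradicting $f(\infty)>0$. Hence $\rho\ge 0$.

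Part (iii) is Karamata's theorem on integrals of regularly varying functions. The statement $\int_X^x t^{\s+\rho}L(t)\,dt$ being regularly varying with index $\s+\rho+1$ and the stated limit relation are \cite[Theorem 1.5.11 or Proposition 1.5.8]{BGT87}; I would simply cite this, noting that the hypothesis $\s\ge-(\rho+1)$ guarantees the integral diverges (or, in the boundary case $\s+\rho+1=0$, is slowly varying by the same theorem). One can also give the self-contained argument: writing $g(t)=t^{\s+\rho}L(t)$, the function $G(x)=\int_X^x g(t)\,dt$ satisfies $G'(x)=g(x)$, and the claimed limit $xG'(x)/G(x)\to\s+\rho+1$ together with the representation $G(x)=G(X)\exp\big(\int_X^x \tfrac{xG'(x)/G(x)}{x}\,dx\big)$ exhibits $G$ as regularly varying of the asserted index; the limit itself comes from the monotone density / l'Hôpital-type argument in the proof of Karamata's theorem. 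Finally, part (iv) is Potter's bound, \cite[Theorem 1.5.6]{BGT87}: fixing $A>1$ and $\dl>0$, the representation \eqref{SV_rep} for the slowly varying part of $f$ gives, for $x,y\ge X(A,\dl)$ large enough, the two-sided bound on $f(y)/f(x)$ by $\max$ and $\min$ of $(y/x)^{\rho\pm\dl}$; I would quote this directly.

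The only mild obstacle is part (iii), which is the one genuinely nontrivial Karamata computation; but since the paper explicitly refers the reader to \cite{BGT87} for the general theory, the cleanest route is to cite the relevant theorems there (Theorem 1.5.11, Proposition 1.5.8, Theorem 1.5.6) rather than reprove them, and only spell out the short deductions in parts (i) and (ii) that follow immediately from the representation \eqref{SV_rep}.
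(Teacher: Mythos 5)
Your proposal is correct and follows essentially the same route as the paper: parts (i) and (ii) via the Karamata representation \eqref{SV_rep}, part (iii) by citing Karamata's theorem \cite[Theorem 1.5.11]{BGT87}, and part (iv) by citing Potter's theorem \cite[Theorem 1.5.6]{BGT87} (the paper additionally notes that the lower bound follows from the upper bound by swapping $x$ and $y$, which your direct quotation of the two-sided form subsumes).
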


\begin{proof}
(i) It follows from the representation \eqref{SV_rep} that

\noi
\begin{align} \notag 
L(t) t^{2\eps} =   c(t) \exp\Big(  \be \log a  
+   \int_a^t \frac{\eps(u) +2\eps }{u} du \Big).
\end{align} 

\noi
For $a_1 > a$ large enough,
we can have  $\eps(u) + 2\eps > \eps > 0$ for  any $u\geq a_1$.
Thus, 
$L(t) t^{2\eps} \ges \exp\big(    \int_{a_1}^t \tfrac{\eps}{u} du \big)
 \ges t^{\eps}$.
It follows that $L(t) \ges t^{-\eps}$ for $t\geq a_1$.
In the same way, we can show  $L(t) \les t^{\eps}$ for $t\geq a_2$
with $a_2 > a$ large enough.
That is, 
\eqref{asy1} is proved with

\noi
\begin{align}\label{lemP1}
  t^{-\eps}  \les   L(t) \les  t^{\eps} \,\,\,
 \text{for $t\geq X$ with $X$ large enough.}
\end{align}

\noi
Therefore, the local boundedness of $L$ follows.

\medskip

\noi
(ii) We first write $f(t) = t^\rho L(t)$ with $L$ slowly varying at infinity. 
Suppose that $\rho < 0$. Then, 
It follows from \eqref{lemP1} with $\eps = -\rho/2$ that 
\[
  t^{-3\eps} \les  f(t) \les  t^{-\eps}
\]
as $t$ tends to infinity, which is a contradiction to $f(\infty) > 0$.
Therefore, $\rho \geq 0$.

\medskip

\noi
(iii) This is a simple reformulation 
of  Karamata's Theorem. See \cite[Theorem 1.5.11]{BGT87}.

\medskip

\noi
(iv) The upper bound is  taken from  \cite[Theorem 1.5.6(iii)]{BGT87}. 
Then, we apply the upper bound to obtain
\[
\frac{f(x)}{f(y)} \leq A \max\big\{  (x/y)^{\rho+\dl},    (x/y)^{\rho-\dl} \big\}   
\]
from which we get
\[
\frac{f(y)}{f(x)} \geq \tfrac1{A} \min\big\{  (y/x)^{\rho+\dl},    (y/x)^{\rho-\dl} \big\}.   
\]

Hence, the proof is completed. 
\qedhere

\end{proof}

\subsection{Basics on Malliavin-Stein method} 
\label{SEC_MS}
Let $\bfB=(B_x)_{x\in\R^d}$ be a 
real-valued, centered stationary
Gaussian random field with covariance function $\cC:\R^d\to\R$.
Throughout this paper, 
we assume that 
\begin{center}
$\bfB$ is almost surely continuous and $\cC(0)=1$.
\end{center}
In particular, $\cC$ is continuous on $\R^d$.
In what follows, we first build an isonormal framework from 
the given Gaussian random field $\bfB$, then we develop
the basic tools from Malliavin calculus and finally state 
the crucial bounds from the Malliavin-Stein method. 

\medskip

\noi
$\bul$ {\bf Isonormal framework.} 
By continuity of $\bfB$,
the $L^2$-Gaussian Hilbert space $\H_\bfB$ generated by $\bfB$
is identical to that generated by $\{B_x: x\in \mathbb{Q}^d\}$,
with $\mathbb{Q}$ the set of rational numbers in $\R$.
In particular, the said Gaussian Hilbert space $\H_\bfB$ is a real separable 
Hilbert space. Let $\fH$ be a real separable Hilbert space. Then, 
by the general theory of Hilbert spaces, 
one can find an isometry

\noi
\begin{align}\label{e_x0}
I_1 :   \fH \to  \H_\bfB
\end{align}

\noi
and   
$e_x\in\fH$ such that

\noi
\begin{align}\label{e_x1}
I_1(e_x) = B_x
\end{align}
 for any $x\in\mathbb{Q}^d$ (then extended 
 for any $x\in\R^d$ by continuity). Note that $\fH$ is the closure
 of the linear span of $\{ e_x: x\in\mathbb{Q}^d\}$.
 The resulting process $\{ I_1(h): h\in\fH\}$ is called an isonormal 
 Gaussian process indexed by $\fH$. It is a centered Gaussian family 
 with covariance structure given by 
 
 \noi
 \begin{align}\label{e_x2}
 \E[ I_1(e_x) I_1(e_y) ] = \langle e_x, e_y\rangle_\fH 
 : =\E[ B_x B_y] = \cC(x-y).
 \end{align}
This isometry relation can be easily  extended to 
\[
 \E[ I_1( h) I_1(g) ] 
 = \langle h, g\rangle_\fH 
 = \lim_{n\to+\infty}  \langle h_n, g_n\rangle_\fH
\]
for some $g_n, h_n\in \text{span}\{e_x: x\in \mathbb{Q}^d\}$ that
converge to $g, h$ in $\fH$ respectively.
Suppose $h_n 
= \sum_{x\in A_n} \al_x e_x$ and $g_n = \sum_{y\in B_n} \be_y e_y$
with $A_n, B_n$ finite subsets of  $\mathbb{Q}^d$ and $\al_x, \be_y\in\R$, 
then  $ \langle h_n, g_n\rangle_\fH 
= \sum_{(x, y)\in A_n\times B_n} \al_x\be_y \cC(x-y)$.

 \medskip
 
 \noi
$\bul$ {\bf Hermite polynomials.} 
 The well known family of Hermite polynomials 
 are orthogonal polynomials
 for the standard   Gaussian measure on $\R$. 
 They can be defined recursively:
 
 \noi
 \begin{align}\label{H1x}
H_0 =1, \quad H_1(x) =x, \quad H_2(x) = x^2 -1, \quad
{\rm and}\quad H_{p+1}(x) = x H_p(x) - p H_{p-1}(x)
\end{align}

\noi
for any integer $p\geq 2$. Alternatively, one can define them via the 
Rodrigues' formula
\begin{align}\label{Rod}
\qquad\qquad\qquad
\text{$H_p(x) = (-1)^p \frac{1}{\phi(x) } \frac{d^p}{dx^p} \phi(x)$,
where $\phi(x) = \frac{1}{\sqrt{2\pi}} e^{-x^2/2}$}.
\qquad\quad
\end{align}

Then, using the recursive definition, one can show by induction
that 

\noi
\begin{align}\label{HerP1}
\frac{d}{dx} H_p(x) = p H_{p-1}(x),\,\, \forall p\in\N;
\end{align}

\noi
using Rodrigues' formula,
one can show that
for any jointly Gaussian distributed random variables $G_1, G_2\sim\NN(0,1)$,
we have 

\noi
\begin{align} \label{HerP2}
\E[H_q(G_1)H_p(G_2)]= \ind_{\{q=p\}}\,q!\,(\E[G_1 G_2])^q\,.
\end{align}
Indeed, with $q\geq p$,  $\rho = \E[G_1 G_2]$, and $G\sim\NN(0,1)$,
we can write by using Rodrigues' formula \eqref{Rod}
and integration by parts with \eqref{HerP1} that

\noi
\begin{align*}
\E[H_q(G_1)H_p(G_2)] 
&= \E \int_\R H_q(x) \phi(x)  H_p(\rho x  + \sqrt{1-\rho^2} G)dx \\
&= (-1)^q \E \int_\R    H_p(\rho x  + \sqrt{1-\rho^2} G) \phi^{(q)}(x) dx \\
&=  (-1)^{q+p} \rho^p p! \int_\R     \phi^{(q-p)}(x) dx,
\end{align*}
 which coincides with \eqref{HerP2}. Moreover, it is not difficult to see
 that every monomial $x^q$ can be written as a finite linear combination 
 of Hermite polynomials, which together with the above orthogonality 
 relation \eqref{HerP2}, indicates that $\{ H_p/\sqrt{p!}: p\in\N \cup\{0\} \}$
 is an orthonormal basis of $L^2(\R, \phi(x)dx)$.
 That is, for any $\varphi:\R\to\R$ square-integrable 
 with respect to the standard Gaussian measure $\phi(x)dx$, we have 
the following Hermite expansion in 
$L^2(\R,  \phi(x) dx)$:

\noi
\begin{align}\label{HerEXP0}
\qquad\qquad
\varphi = \sum_{k=0}^\infty a_k H_k
\quad{\rm with}
\quad a_k = \frac{1}{k!} \int_\R H_k(x) \varphi(x) \frac{1}{\sqrt{2\pi}} e^{-x^2/2}dx.
\end{align}
 In particular, we have $\sum_{k\geq 0} a_k^2 k! <\infty$.
 We say $\varphi$ has {\it Hermite rank} $R$ if $a_R \neq 0$
 and $a_k = 0$ for $1\leq k< R$.
 In this case, the Hermite rank of $\varphi - a_R H_R$
 is called the {\it second Hermite rank} of $\varphi$, denoted by $R'$.

\begin{example} \rm \label{Example1}
Consider $\varphi(x) =\ind_{\{ x \ge u\}}$ 
with a given $u\in\R$.
 One can easily deduce from Rodrigues' formula \eqref{Rod}  
 that  
 with $G\sim \NN(0,1)$,
 \[
  \E[\ind_{ \{ G \geq u\} }] 
  = \PP( G \geq  u)
 \quad{\rm and}
 \quad 
 a_q= \frac{1}{q!\sqrt{2\pi} } e^{-\frac{u^2}2} H_{q-1}(u) ,
 \]
and hence 
   $(R, R') =(1, 2)$ for the function 
   $x\mapsto \ind_{\{ x \ge u\}} - \E[\ind_{ \{ G \geq u\} }]  $
for   every $u\neq 0$
and $ (R, R') =(1, 3)$ for $u=0$ in view of \eqref{H1x}.

\end{example}

\medskip 

\noi
$\bul$ {\bf  Wiener chaos expansion.} Let $\{ I_1(h): h\in\fH\}$ be the isonormal
process as in  \eqref{e_x0},  \eqref{e_x1}, and \eqref{e_x2}. Then, the well-known
Wiener-It\^o chaotic decomposition asserts 
that the $L^2(\Omega)$ space generated by $\{ I_1(h): h\in\fH\}$
, or equivalently by $\bfB$, can be decomposed into 
mutually orthogonal closed subspaces.
That is, 

\noi
\begin{align}\label{WCE1}
L^2(\Omega, \s\{\bfB\}, \PP ) = \bigoplus_{q=0}^\infty \C_q,
\end{align}
where $\C_0\simeq \R$ denotes the set of constant random variables
and
$\C_q$ is called the $q$-th Wiener chaos that is the $L^2(\Omega)$-closure
of 
$
{\rm span}\big\{ H_q(I_1(h)) : \|h\|_{\fH} = 1\big\} .
$
See, e.g.,  \cite[Theorem 1.1.1]{Nua06} and \cite[Theorem 2.2.4]{NP12}.

Let $\fH^{\otimes p}$ denote the $p$-th tensor product of $\fH$
and let $\fH^{\otimes p}_{\rm sym}$ be its symmetric subspace. 
Letting $\{h_i: i\in\N\}$ denote a fixed orthonormal basis
of $\fH$,\footnote{In this paper, we use this orthonormal basis to define
various terms, such as the $r$-contractions in \eqref{def_SUO}
and the Mallavin derivative $DI_q(f_q) = q I_{q-1}(f_q)$
in \eqref{MD1b}. Note that these definitions do not depend 
on the particular choice of orthonormal basis of $\fH$, and we will
not repeat this point in this paper.}
$f\in\fH^{\otimes p}$ can be represented as

\noi
 \begin{align}\label{rep_F}
f = \sum_{i_1, ... , i_p\in\N} f_{i_1, ... , i_p} 
h_{i_1} \otimes \cdots \otimes h_{i_p},
\end{align}

\noi
and $f\in\fH^{\otimes p}_{\rm sym}$ if and only if 
$f_{i_{\tau(1)} , i_{\tau(2)}, ..., i_{\tau(p)}     } =  f_{i_1, ... , i_p} $
for any permutation $\tau$ on $\{1, ... , p\}$
and for any $i_1, ..., i_p\in\N$.
We denote by ${\rm sym}(f)$ the canonical symmetrization of 
$f\in\fH^{\otimes p}$:
\[
{\rm sym}(f) =  \sum_{i_1, ..., i_p}
\frac{1}{p!} \sum_{\tau\in\mathfrak{S}_p}
f_{i_{\tau(1)} , i_{\tau(2)}, ..., i_{\tau(p)}     }  
h_{i_1} \otimes \cdots \otimes h_{i_p},
\]
where $\mathfrak{S}_p$ denotes the set of 
permutations  on $\{1, ... , p\}$. 
By Jensen's inequality, 
we have

\noi
\begin{align}\label{sym_bdd}
\| {\rm sym}(f)  \|_{\fH^{\otimes p}} \leq \| f  \|_{\fH^{\otimes p}} 
\end{align}
for any $f\in\fH^{\otimes p}$.
We refer readers to  \cite[Appendix B]{NP12}
for the Hilbert space notations. 
For every $p\in\N$, there is a modified isometry, 
denoted by $I_p$,
from $\fH^{\otimes p}_{\rm sym}$ to the $p$-th Wiener chaos $\C_p$
characterized by

\noi
\begin{align}\label{e_x3}
I_p\big( {\rm sym}( \otimes_{i=1}^\infty h_i^{\otimes a_i}   )   \big)
= \prod_{i=1}^\infty H_{a_i}( I_1(h_i)  )
\end{align}

\noi
with $a_i\in\N\cup\{0\} $ such that $\sum_{i\geq 0} a_i = p$,
and $\{h_i: i\geq 1\}$ the orthonormal basis of $\fH$.
Then, one can write 
$
\C_p = \{ I_p(f) : f\in\fH^{\otimes p}_{\rm sym} \},
$
 while we also write $I_p(f)  = I_p(  {\rm sym}(f)  )$ 
 for general $f\in\fH^{\otimes p}$ and call it the $p$-th multiple 
 integral of $f$. Now we can rewrite \eqref{WCE1}
 as follows: for any $F\in L^2(\Omega, \s\{\bfB\}, \PP )$,
 one can find (unique) kernels $f_p\in\fH^{\otimes p}_{\rm sym}$, $p\geq 1$,
 such that

\noi
\begin{align}  \notag 
F = \E[ F] + \sum_{p\geq 1} I_p(f_p).
\end{align}

\noi
Then, we can view the Wiener chaos expansion
as an infinite-dimensional generalization of the 
Hermite expansion \eqref{HerEXP0},
where the latter only involves one Gaussian random variable. 
Moreover, the relation \eqref{HerP2} can be easily generalized,
by \eqref{e_x3} and a density argument,
 to

\noi
\begin{align}\label{chaos_iso}
\E[ I_p(f) I_q(g)] = p! \ind_{\{ p=q\}}
 \langle \textup{sym}(f),  \textup{sym}(g) \rangle_{\fH^{\otimes p}} 
\end{align}
for any  $f\in\fH^{\otimes p}$ and $g\in\fH^{\otimes q}$.

\begin{example} \label{Example2b}  \rm
Using  \eqref{e_x1} and \eqref{e_x3}, we can  write

\noi
\begin{align}
H_p(B_x) = I_p( e_x^{\otimes p}).  \label{HPB}
\end{align}

\noi
Then, letting $Y_t$ be defined as in \eqref{def_Y}
and $\varphi$ have the Hermite expansion \eqref{HerEXP0},
we can write 
 
 \noi
\begin{align} \label{GTQ}
\begin{aligned}
Y_t - \E[ Y_t]  
&=  \int_{t D} \sum_{q\geq 1} I_q( a_q e_x^{\otimes q})  dx \\
& =  \sum_{q\geq 1} I_q\bigg(  \int_{t D} a_q e_x^{\otimes q}  dx \bigg)
=:  \sum_{q\geq 1} I_q(g_{t, q}),
\end{aligned}
\end{align}

\noi
where the above interexchange of the multiple integral and 
the Lebesgue integral over $tD$ is guaranteed 
by a stochastic  version of Fubini's theorem;
see, e.g., \cite[Lemma 2.6 (ii)]{BZ24}.

 \end{example}
 
Before we present the Malliavin-Stein bound, let us first 
introduce an important tool  from the  Hilbert space theory
and state Nualart-Peccati's fourth moment theorem \cite{NP05} that 
is central in the Malliavin-Stein method.

\medskip

\noi
$\bul$ {\bf Contractions.}  Suppose 
$f\in\fH^{\otimes p}$ and $g\in\fH^{\otimes q}$,
with $p, q\in\N$,
are represented as in \eqref{rep_F}:
 \[
f = \sum_{i_1, ... , i_p\in\N} f_{i_1, ... , i_p} 
h_{i_1} \otimes \cdots \otimes h_{i_p}
\quad
{\rm and}
\quad
g = \sum_{i_1, ... , i_q\in\N} g_{i_1, ... , i_q} 
h_{i_1} \otimes \cdots \otimes h_{i_q},
\]
then the $r$-contraction of $f$ and $g$, with $r\in\{0, ... , p\wedge q\}$,
is defined by 
\begin{align}\label{def_SUO}
\begin{aligned}
f\otimes_r g
&=  \sum_{i_1, ... , i_{p-r}, k_1, ... , k_{q-r}}
\bigg( \sum_{j_1, ... , j_r\in\N} 
f_{j_1, ... , j_r, i_1, ... , i_{p-r}} g_{j_1, ... , j_r,  k_1, ... , k_{q-r}}  \bigg) \\
&\qquad\qquad\qquad\qquad
 h_{i_1}\otimes \cdots \otimes h_{i_{p-r}} \otimes 
  h_{k_1}\otimes \cdots \otimes h_{k_{q-r}} .
\end{aligned}
\end{align}
By Cauchy-Schwarz, one can derive easily that 

\noi
\begin{align}   \label{CS_bdd}
 \| f\otimes_r g \|_{\fH^{\otimes (p+q-2r)}}
  \leq \|f\|_{\fH^{\otimes p}}  \| g\|_{\fH^{\otimes q}}
\end{align}

\noi
for any $f\in\fH^{\otimes p}$ and $g\in\fH^{\otimes q}$, while it 
can also be easily verified that 

\noi
\begin{align} \label{C2b}
 \| f\otimes_r g \|_{\fH^{\otimes (p+q-2r)}}^2
 = \langle f\otimes_{p-r} f, g\otimes_{q-r} g \rangle_{\fH^{\otimes 2r}}.
\end{align}

If $g_{t,q}$ is defined as in \eqref{GTQ}, then the $r$-contraction of 
$g_{t,q}$ with itself is given by 

\noi
\begin{align}\label{GTQ_r}
\begin{aligned}
 g_{t,q} \otimes_r g_{t,q}
 & =a_q^2 \int_{tD\times tD}  e_x^{\otimes q} \otimes_r e_y^{\otimes q}  dx dy\\
 &= a_q^2 \int_{tD\times tD}  e_x^{\otimes q-r} \otimes  e_y^{\otimes q-r} \cC^r(x-y)  dx dy,
\end{aligned}
\end{align}

\noi
where we used \eqref{e_x2}, i.e., 
$\langle e_x, e_y\rangle_\fH = \cC(x-y)$.
By the definition of $\fH$,
we have 

\noi
\begin{align}  \label{GTQ_r2}
\begin{aligned}
&\|  g_{t,q} \otimes_r g_{t,q} \|^2_{\fH^{\otimes 2q-2r}} \\
& \qquad  =a_q^4 \int_{(tD)^4} \cC^{q-r}(x-z) \cC^{q-r}(y-w)
 \cC^r(x-y) \cC^r(z-w)  \, dx dy  dz dw.
\end{aligned}
\end{align}

  In particular, one deduces from \eqref{CS_bdd} 
  that 

  \noi
\begin{equation}\label{equ:important inequality}
\|  g_{t,q} \otimes_r g_{t,q} \|_{\fH^{\otimes 2q-2r}} 
  \leq \| g_{t,q}\|^2_{\fH^{\otimes q}}.
\end{equation}

\noi
The contractions appear naturally  when we 
represent the product of two multiple integrals 
as a linear combination of new multiple integrals.

\begin{lemma}\textup{(Product formula, \cite[Theorem 2.7.10]{NP12})} \label{product_f}
Suppose $F= I_p(f)$ and $G = I_q(g)$
for some $f\in \fH^{\otimes p}_{\rm sym}$
and $g\in \fH^{\otimes q}_{\rm sym}$, with $p, q\in\N$.
Then, 
\[
FG = \sum_{r=0}^{p\wedge q} r! \binom{p}{r} \binom{q}{r} 
I_{p+q-2r}\big(  \textup{sym}( f\otimes_r g) \big).
\]
\end{lemma}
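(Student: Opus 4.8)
The plan is to route the proof through the generating function of the multiple integrals --- the stochastic exponential --- and to first reduce the general statement to rank-one kernels by a density argument. The only genuinely delicate point will then be to justify the term-by-term manipulations on the generating function identity.

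\emph{Reduction to rank-one kernels.} Regarded as maps $(f,g)\in\fH^{\otimes p}_{\rm sym}\times\fH^{\otimes q}_{\rm sym}\mapsto L^1(\Omega)$, both sides of the asserted identity are bilinear and continuous. For the left-hand side this follows from Cauchy--Schwarz, the chaos isometry \eqref{chaos_iso} and the bound \eqref{sym_bdd}, which give $\|I_p(f)I_q(g)\|_{L^1(\Omega)}\le\|I_p(f)\|_{L^2(\Omega)}\|I_q(g)\|_{L^2(\Omega)}\le\sqrt{p!\,q!}\,\|f\|_{\fH^{\otimes p}}\|g\|_{\fH^{\otimes q}}$; for the right-hand side it follows from \eqref{CS_bdd}, \eqref{sym_bdd} and \eqref{chaos_iso}, which bound each of the finitely many summands by a constant times $\|f\|_{\fH^{\otimes p}}\|g\|_{\fH^{\otimes q}}$. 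Since the linear span of $\{u^{\otimes p}:u\in\fH\}$ is dense in $\fH^{\otimes p}_{\rm sym}$ (polarization), it suffices to prove the formula for $f=u^{\otimes p}$, $g=v^{\otimes q}$ with $u,v\in\fH$.

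\emph{The generating-function computation.} For $u\in\fH$ set $\cE(u):=\exp\bigl(I_1(u)-\tfrac{1}{2}\|u\|_{\fH}^2\bigr)$; this lies in every $L^r(\Omega)$, $r<\infty$, and has the chaos expansion $\cE(u)=\sum_{n\ge0}\tfrac{1}{n!}I_n(u^{\otimes n})$, absolutely convergent in $L^2(\Omega)$. From $I_1(u)+I_1(v)=I_1(u+v)$ and $\|u+v\|_{\fH}^2=\|u\|_{\fH}^2+2\langle u,v\rangle_{\fH}+\|v\|_{\fH}^2$ one gets the key identity $\cE(u)\,\cE(v)=\cE(u+v)\,e^{\langle u,v\rangle_{\fH}}$. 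Replacing $u$ by $su$ and $v$ by $tv$ and expanding both sides as power series in $(s,t)$: the left-hand side is $\sum_{p,q\ge0}\tfrac{s^pt^q}{p!q!}I_p(u^{\otimes p})I_q(v^{\otimes q})$, while on the right one uses the exact expansion $(su+tv)^{\otimes n}=\sum_{p'+q'=n}\binom{n}{p'}s^{p'}t^{q'}\textup{sym}(u^{\otimes p'}\otimes v^{\otimes q'})$ together with $e^{st\langle u,v\rangle_{\fH}}=\sum_{r\ge0}\tfrac{(st)^r}{r!}\langle u,v\rangle_{\fH}^r$ to obtain $\sum_{p',q',r\ge0}\tfrac{s^{p'+r}t^{q'+r}}{p'!q'!r!}\langle u,v\rangle_{\fH}^r\,I_{p'+q'}\bigl(\textup{sym}(u^{\otimes p'}\otimes v^{\otimes q'})\bigr)$. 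Comparing coefficients of $s^pt^q$ (so that $p'=p-r$, $q'=q-r$) and multiplying by $p!q!$ gives $I_p(u^{\otimes p})I_q(v^{\otimes q})=\sum_{r=0}^{p\wedge q}\tfrac{p!q!}{(p-r)!(q-r)!r!}\langle u,v\rangle_{\fH}^r\,I_{p+q-2r}\bigl(\textup{sym}(u^{\otimes(p-r)}\otimes v^{\otimes(q-r)})\bigr)$. It then remains to recognize $\tfrac{p!q!}{(p-r)!(q-r)!r!}=r!\binom{p}{r}\binom{q}{r}$ and $\langle u,v\rangle_{\fH}^r\,u^{\otimes(p-r)}\otimes v^{\otimes(q-r)}=u^{\otimes p}\otimes_r v^{\otimes q}$, the latter being immediate from the definition \eqref{def_SUO} of the $r$-contraction; applying $\textup{sym}$ turns the right-hand side into $\sum_{r=0}^{p\wedge q}r!\binom{p}{r}\binom{q}{r}I_{p+q-2r}\bigl(\textup{sym}(f\otimes_r g)\bigr)$ with $f=u^{\otimes p}$, $g=v^{\otimes q}$, which is the claim.

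\emph{The main point to check.} The delicate step is the legitimacy of the term-by-term manipulations and the comparison of coefficients above. Working in $L^1(\Omega)$ makes it routine: the left-hand side equals $\cE(su)\cE(tv)$ because a product of two $L^2(\Omega)$-convergent chaos series converges in $L^1(\Omega)$ to the product of their sums, and the double series exhibited for it is absolutely $L^1(\Omega)$-convergent; the right-hand side equals $\cE(su+tv)\,e^{st\langle u,v\rangle_{\fH}}$ and, being the product of an absolutely $L^2(\Omega)$-convergent chaos series with an absolutely convergent scalar series, is absolutely convergent in $L^2(\Omega)\subseteq L^1(\Omega)$ and may be rearranged freely; since two absolutely convergent $L^1(\Omega)$-valued power series representing the same function have equal coefficients, the matching is justified. (Alternatively one invokes hypercontractivity of the chaoses and runs everything in $L^2(\Omega)$.) A completely elementary alternative to the whole argument is to use the same density reduction to take $\fH$ finite-dimensional with a fixed orthonormal basis, rewrite $I_p(u^{\otimes p})$ and $I_q(v^{\otimes q})$ as monomials in independent standard Gaussians via \eqref{e_x3}, apply the one-variable Hermite product formula $H_aH_b=\sum_{c=0}^{a\wedge b}c!\binom{a}{c}\binom{b}{c}H_{a+b-2c}$ in each coordinate, and reorganize; there the obstacle is purely combinatorial --- a Vandermonde-type convolution identity for the resulting coefficients --- and the generating-function route seems cleaner.
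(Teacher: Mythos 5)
Your proof is correct. Note that the paper itself does not prove this lemma at all: it simply quotes it from \cite[Theorem 2.7.10]{NP12}, where the argument proceeds by induction on the chaos order, starting from the case $q=1$ (the identity $I_1(h)I_p(f)=I_{p+1}(f\otimes h)+pI_{p-1}(f\otimes_1 h)$) and iterating. Your route through the stochastic exponential $\cE(u)=\exp(I_1(u)-\tfrac12\|u\|_\fH^2)$ is a genuinely different, classical alternative: the reduction to rank-one kernels by bilinearity, continuity in $L^1(\Omega)$ and polarization is sound; the identity $\cE(su)\cE(tv)=\cE(su+tv)e^{st\langle u,v\rangle_\fH}$ together with the expansion $\cE(w)=\sum_n\frac1{n!}I_n(w^{\otimes n})$ (which is the Hermite generating function combined with \eqref{e_x3}) yields exactly the claimed coefficients $r!\binom{p}{r}\binom{q}{r}$ after matching powers of $s^pt^q$; and the recognition $u^{\otimes p}\otimes_r v^{\otimes q}=\langle u,v\rangle_\fH^r\,u^{\otimes(p-r)}\otimes v^{\otimes(q-r)}$ is immediate from \eqref{def_SUO}. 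You also correctly isolate and dispose of the one delicate point, namely the absolute $L^1(\Omega)$-convergence that licenses the rearrangement and the coefficient comparison (pairing against bounded random variables reduces it to scalar power series). What the generating-function approach buys is that all the combinatorics is absorbed into the multinomial expansion of $(su+tv)^{\otimes n}$ and the exponential series, whereas the inductive proof of \cite{NP12} requires a Vandermonde-type bookkeeping at each step; the price is the (routine but necessary) convergence justification you supply.
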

In particular, taking $F = H_p(B_x)$ and $G= H_q(B_x)$ 
with \eqref{HPB} leads  to 
the following product formula for Hermite polynomials:

\noi
\begin{align*} 
H_p H_q =  \sum_{r=0}^{p\wedge q} r! \binom{p}{r} \binom{q}{r}  H_{p+q-2r}.
\end{align*}

\noi
This implies that 
\begin{align}\label{SeH}
\begin{aligned}
 \sum_{r=0}^{p\wedge q} \bigg[ r! \binom{p}{r} \binom{q}{r} \bigg]^2  (p+q-2r)!
 &= \E\big[ H_p(B_x)^2  H_q(B_x)^2  \big] \\
 &\leq  \sqrt{ \E\big[ H_p(B_x)^4\big]   \E\big[  H_q(B_x)^4   \big]} \\
 &\leq 3^{p+q} p! q!,
\end{aligned}
\end{align}

\noi
where in the last step, we used the Wiener chaos estimate 
$ \E[ H_p(B_x)^4 ] \leq 3^{2p}  \E[ H_p(B_x)^2]^2 = 3^{2p}(p!)^2$;
see, e.g., \cite[Corollary 2.8.14]{NP12}.
The above inequality \eqref{SeH} will be used to simplify
the expressions    \eqref{MPQ} in the proof of Proposition 
\ref{MS_bdd}.

\begin{theorem}\textup{(Fourth moment theorem, \cite{NP05})} \label{FMT_NP}
Suppose $F_n = I_p(f_n)$ is a sequence of random variables
 in $\C_p$, 
with $p\geq 2$ and $f_n\in\fH^{\otimes p}_{\rm sym}$, 
such that $\E[ F_n^2] = p! \| f_n\|^2_{\fH^{\otimes p}} \to 1$ as 
$n\to\infty$. Then, the following statements are equivalent:
\begin{itemize}
\item[(i)] $F_n$ converges in law to $Z\sim \NN(0,1)$ as $n\to\infty$;

\item[(ii)] $\E[ F^4_n]$ converges   to $ \E[Z^4]=3$ as $n\to\infty$;

\item[(iii)] $\| f_n\otimes_r f_n\|_{\fH^{\otimes {2p-2r}}} \to 0 $  
 as $n\to\infty$ for every $r = 1, ... , p-1$.

\end{itemize}

\end{theorem}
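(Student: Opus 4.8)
The plan is to establish the circle of implications (iii)\,$\Rightarrow$\,(i)\,$\Rightarrow$\,(ii)\,$\Rightarrow$\,(iii), separating an \emph{algebraic} part (the equivalence of the moment condition (ii) and the contraction condition (iii), obtained by bookkeeping with the product formula) from a \emph{probabilistic} part (the passage to asymptotic normality via the Malliavin-Stein method). Throughout, recall $\sigma_n^2:=\E[F_n^2]=p!\,\|f_n\|_{\fH^{\otimes p}}^2\to 1$, and write $f_n\otimes_r f_n$ for the $r$-contractions as in~\eqref{def_SUO}, $\textup{sym}(\cdot)$ for the symmetrization.

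First, for the equivalence (ii)\,$\Leftrightarrow$\,(iii): the product formula (Lemma~\ref{product_f}) gives $F_n^2=\sigma_n^2+\sum_{r=0}^{p-1}r!\binom{p}{r}^2 I_{2p-2r}(\textup{sym}(f_n\otimes_r f_n))$, so by the isometry~\eqref{chaos_iso},
\[
\E[F_n^4]=\sigma_n^4+\sum_{r=0}^{p-1}\Big(r!\binom{p}{r}^2\Big)^2(2p-2r)!\,\|\textup{sym}(f_n\otimes_r f_n)\|_{\fH^{\otimes 2p-2r}}^2 .
\]
Isolating the $r=0$ term and expanding $\textup{sym}(f_n\otimes f_n)$ into ordinary contractions, one checks $\E[F_n^4]-3\sigma_n^4\ge 0$ together with two-sided bounds: there are $c_p,C_p>0$ with
\[
c_p\sum_{r=1}^{p-1}\|\textup{sym}(f_n\otimes_r f_n)\|^2\;\le\;\E[F_n^4]-3\sigma_n^4\;\le\;C_p\sum_{r=1}^{p-1}\|f_n\otimes_r f_n\|^2 .
\]
Hence (ii) holds iff $\|\textup{sym}(f_n\otimes_r f_n)\|\to 0$ for every $r=1,\dots,p-1$, and that this is in turn equivalent to (iii) follows from Jensen's inequality~\eqref{sym_bdd} in one direction and, in the other, from a standard combinatorial estimate using the identity $\|f_n\otimes_r f_n\|_{\fH^{\otimes 2p-2r}}=\|f_n\otimes_{p-r}f_n\|_{\fH^{\otimes 2r}}$ from~\eqref{C2b}, the bound $\|f_n\otimes_r f_n\|\le\|f_n\|^2$, and uniform boundedness of $\|f_n\|$ (see, e.g., \cite[Chapter~6]{NP12}).

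Next, for (iii)\,$\Rightarrow$\,(i), the probabilistic input enters through the Malliavin-Stein bound: for $F=I_p(f)$ with $\E[F^2]=1$, Stein's characterization of the Gaussian law combined with the Malliavin integration-by-parts formula yields (see \cite[Chapter~5]{NP12})
\[
d_{\rm W}\big(F,\NN(0,1)\big)\;\le\;\sqrt{\tfrac{2}{\pi}}\;\sqrt{\Var\!\Big(\tfrac1p\|DF\|_{\fH}^2\Big)} .
\]
Since $DF=pI_{p-1}(f)$, an analogous product-formula expansion of $\|DF\|_{\fH}^2=p^2\langle I_{p-1}(f),I_{p-1}(f)\rangle_{\fH}$ (using $\sum_j f(\cdot,h_j)\otimes_r f(\cdot,h_j)=f\otimes_{r+1}f$) shows that $\tfrac1p\|DF\|_{\fH}^2$ has mean $\sigma^2$ and variance a strictly positive linear combination of $\|\textup{sym}(f\otimes_r f)\|^2$, $r=1,\dots,p-1$ --- exactly the quantities controlled above. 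So under (iii) we get $\Var(\tfrac1p\|DF_n\|_{\fH}^2)\to 0$, and applying the bound to $F_n/\sigma_n$ with $\sigma_n\to 1$ gives $d_{\rm W}(F_n,\NN(0,1))\to 0$, hence (i).

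Finally, (i)\,$\Rightarrow$\,(ii) is a soft uniform-integrability argument on the fixed chaos $\C_p$: hypercontractivity gives the equivalence $\E[|F_n|^q]\le(q-1)^{pq/2}(\E[F_n^2])^{q/2}$ for $q\ge 2$ (the same phenomenon as the bound $\E[H_p(B_x)^4]\le 3^{2p}(p!)^2$ used in~\eqref{SeH}), whence $\sup_n\E[F_n^8]<\infty$, the family $\{F_n^4\}_n$ is uniformly integrable, and (i) plus the continuous mapping theorem force $\E[F_n^4]\to\E[Z^4]=3$. The genuine content of the theorem is concentrated in the preceding step: proving the Malliavin-Stein inequality (which itself rests on Stein's lemma and Gaussian integration by parts) and carrying out the chaos decomposition of $\|DF\|_{\fH}^2$ that identifies its variance with $\sum_{r=1}^{p-1}(\text{positive})\,\|\textup{sym}(f\otimes_r f)\|^2$; the equivalence (ii)\,$\Leftrightarrow$\,(iii) and the uniform-integrability step are computational but routine.
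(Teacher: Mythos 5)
Your proposal is correct, but note that the paper does not actually prove Theorem~\ref{FMT_NP}: it is stated as a cited classical result, with only a one-paragraph remark that (ii)$\Leftrightarrow$(iii) follows from the product formula plus the orthogonality relation~\eqref{chaos_iso}, and with the equivalence to (i) deferred to \cite{NP05} (random time change) and \cite[Chapter~5]{NP12} (Malliavin--Stein). Your sketch is a faithful outline of exactly that modern Malliavin--Stein route --- product-formula bookkeeping for (ii)$\Leftrightarrow$(iii), the bound $d_{\rm W}(F,\NN(0,1))\lesssim\sqrt{\Var(p^{-1}\|DF\|_\fH^2)}$ together with the chaos decomposition of $\|DF\|_\fH^2$ for (iii)$\Rightarrow$(i), and hypercontractivity plus uniform integrability for (i)$\Rightarrow$(ii) --- so it matches the approach the paper points to; the only cosmetic slip is calling the hypercontractive moment bound an ``equivalence.''
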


Note that the computation of fourth moment
$\E[ F^4_n]$ can be done by first
expanding $F_n^2$ using Lemma \ref{product_f}
and then expressing $\Var(F_n^2)$ using 
the orthogonality relation \eqref{chaos_iso},
and with some effort, one can show the equivalence
between (ii) and (iii). For the equivalence between
(i) and (iii), we refer interested readers
to the original paper \cite{NP05}
that utilized the random time change technique
and to \cite[Chapter 5]{NP12} for a
modern treatment using the 
Malliavin-Stein method.

\medskip

\noi
$\bul$ {\bf Malliavin-Stein bounds.} 
Since the publication of Nualart and Peccati's striking 
fourth moment theorem \cite{NP05} in 2005,
there have been many important progresses 
that explore the limit theorems on Wiener chaoses. 
In 2008, Nualart and Ortiz-Latorre 
provided in \cite{NOL08} another equivalent condition 
for the fourth moment 
theorem:

\noi
\begin{align}  \notag 
\textup{(iv)} \quad
\Var(  \| DF_n \|^2_{\fH} )  \to 0
\quad\text{as $n\to\infty$,}
\end{align}

\noi
where $DF_n$ denotes the Malliavin derivative of $F_n$;
see \eqref{MD1a}    and \eqref{DYinf} for the definition of $D$.
The paper  \cite{NOL08}  contains a methodological breakthrough:  
the authors used Malliavin calculus tools,
 in particular the Gaussian  integration by parts formula
 to derive the central limit theorems. 
 On the other hand, Stein's lemma asserts that 
 for an integrable random variable $Z$, we have 
 $Z\sim\NN(0,1)$ if and only if $\E[ f(Z)Z] = \E[ f'(Z)] $
 for any  differentiable $f$ with $ \E[ | f'(Z)|] <\infty$.
 This is nothing else but a simple case of 
 the Gaussian integration by parts formula,
 and is  one of the fundamental blocks in Stein's method -- a powerful
  toolbox for establishing quantitative central limit theorems. 
 In a 2009 paper \cite{NP09}, Nourdin and Peccati combined 
 Malliavin calculus and Stein's method for the very first time
 in order to quantify the above fourth moment theorem (Theorem \ref{FMT_NP}), and 
 then they   unraveled a new research field, nowadays known
 as the Malliavin-Stein method or Nourdin-Peccati analysis;
 see \cite{NP12} for a comprehensive treatment and
 see also \cite[Chapter 1]{GZthesis} for a brief historical
 account. 

\medskip

In the following, we first briefly introduce several 
operators in Malliavin calculus and then state the 
Malliavin-Stein bounds. Note that
for our purpose, we only define  most of these operators on   
 finitely many chaoses 
while we refer interested readers to the books
\cite{Nua06, NP12} for the general theory.

\begin{definition} 
Suppose that $Y$ has a finite chaotic decomposition:

\noi
\begin{equation}
    \label{chaos_Y}
    Y=\E[Y] + \sum_{q=1}^N I_q(f_q)
\end{equation}

\noi
with $f_q\in\fH^{\otimes q}_{\rm sym}$ for  $1 \leq q \leq N < \infty$.
Then, 
its Malliavin derivative $DY$ is a $\fH$-valued random variable 
defined by 

\noi
\begin{align}\label{MD1a}
DY =\sum_{q=1}^N q\, I_{q-1}(f_q),
\end{align}

\noi
where

 \noi
\begin{align}\label{MD1b}
 I_{q-1}(f_q) : =  \sum_{i\in\N } I_{q-1}( f_q\otimes_1 h_i) h_i
\end{align}
with $\{ h_i: i\in\N \}$ the orthonormal basis of $\fH$
and $\otimes_1$   the $1$-contraction defined as in \eqref{def_SUO}.
We also define  the Ornstein-Uhlenbeck operator $L$
and its    pseudo-inverse $L^{-1}$ by setting
\[
LY = \sum_{q=1}^N -q I_q(f_q)
\quad
{\rm and}
\quad
L^{-1}Y =  \sum_{q=1}^N -\frac1q I_q(f_q).
\]
In particular, one has $LL^{-1}Y  = L^{-1} L Y = Y - \E[Y]$.

\end{definition}

It is clear that with $Y$ as in \eqref{chaos_Y},
\begin{align*} 
\E\big[ \| DY \|_\fH^2 \big]
= \sum_{q=1}^N  q \E\big[  ( I_q(f_q) )^2 \big].
\end{align*}

\noi
For a square-integrable random variable $F$ with an
infinite chaos expansion of the form \eqref{chaos_Y}
with $N = +\infty$, 
if  $
\sum_{q\ge 1}  q \E\big[  ( I_q(f_q) )^2 \big] 
<+\infty$,
we say that $Y\in \mathbb D^{1,2}$ (i.e., $Y$ is Malliavin differentiable) and we define its Malliavin derivative by
\begin{equation}\label{DYinf}
DY = \sum_{q\ge 1} q\, I_{q-1}(f_q).
\end{equation}
When  $Y$ has a finite chaotic decomposition as in \eqref{chaos_Y},    $Y\in\mathbb D^{1,2}$. 
More generally, 
we say that 
\begin{align} \label{DK2}
\text{$Y\in \mathbb D^{k,2}$ for $k>0$ if 
$
\sum_{q\ge 1}  q^k \E\big[  ( I_q(f_q) )^2 \big] <+\infty$.}
\end{align}
Now we are ready to state the following Malliavin-Stein bounds.

\begin{proposition}\label{MS_bdd}
{\rm (i)}    Let $Y$ be a centered random variable with   variance $\s^2 >0$
    and a finite chaotic decomposition as in \eqref{chaos_Y}.
    Then, with  $Z\sim N(0,1)$,
    
    \noi
    \begin{align}\label{MSbdd1}
      \textup{dist}(Y, Z) \leq 2 | 1- \s^2| + 
      2 \sqrt{\Var\big(\langle DY,-DL^{-1}Y \rangle_{\fH} \big)},
    \end{align}

 \noi
 where the distributional distance $\textup{dist}$ can be 
 the total-variation distance $d_{\rm TV}$ and $W_p$ metric with $p\in[1,2]$
 in \eqref{def_dist}.

    \smallskip
    
    \noi
    {\rm (ii)} With $Y$ as in  \eqref{chaos_Y} such that $\s^2=1$, 
    one has
    
    \noi
    \begin{align} \label{MSbdd2}
     \begin{aligned}
    	\sqrt{\Var\big(  \langle DY,-DL^{-1}Y' \rangle_{\fH}   \big)}
        &        \leq 2N\sum_{p=1}^N  
    	3^{2p} p! \M_{p} ,
    \end{aligned}
    \end{align}
  
 \noi
   where
   \begin{align}\label{def_MP}
 \M_{p} = \max_{1\leq r \leq p-1}  \| f_p \otimes_r f_p \|_{\fH^{\otimes 2p-2r}}.
\end{align}

     \smallskip
    
    \noi
    {\rm (iii)} Suppose  additionally $Y' = \sum_{q=1}^{N'} I_q( g_q)$ 
    has unit variance
     with $g_q\in\fH^{\otimes q}_{\rm sym}$
     and $\s^2 =1$.
Then,       we get 
      
        \noi
    \begin{align} \label{MSbdd3}
    \begin{aligned}
    & \sqrt{\Var\big(  \langle DY,-DL^{-1}Y' \rangle_{\fH}   \big)}
      \le \sqrt{N'}\sum_{q=1}^{N'} 
   3^{2q} q!\M'_{q} 	 + N\sum_{p=1}^N  
   3^{2p} p! \M_{p} ,
   \end{aligned}
    \end{align}

  \noi
   where
   \begin{align}\label{def_MQ}
 \M'_{q} = \max_{1\leq r \leq q-1}  \| g_q \otimes_r g_q \|_{\fH^{\otimes 2q-2r}}.
\end{align}

\end{proposition}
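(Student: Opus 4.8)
The plan is to prove Proposition~\ref{MS_bdd} in three stages, matching its three parts. For part~(i), I would start from the general Nourdin--Peccati bound (see, e.g., \cite[Theorem 5.1.3 and Theorem 5.2.6]{NP12}): for any $Y\in\mathbb D^{1,2}$ centered with finite variance $\s^2>0$, and $Z\sim\NN(0,\s^2)$,
\[
\textup{dist}(Y,Z)\le C\,\sqrt{\E\big[(\s^2-\langle DY,-DL^{-1}Y\rangle_\fH)^2\big]},
\]
and then use the triangle inequality together with the comparison $\textup{dist}(\NN(0,\s^2),\NN(0,1))\le |1-\s^2|$ (valid for both total variation and Wasserstein after the normalization is tracked carefully; in the Wasserstein case one uses $d_{\rm W}(\NN(0,a),\NN(0,b))\le |\sqrt a-\sqrt b|\le|a-b|$ when $a,b$ are close to $1$, and in the total-variation case a direct density estimate). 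Since $\E[\langle DY,-DL^{-1}Y\rangle_\fH]=\E[Y^2]=\s^2$, the variance $\Var(\langle DY,-DL^{-1}Y\rangle_\fH)$ equals the mean-square term above, giving \eqref{MSbdd1}. The only subtlety is that $Y$ here is not assumed to have unit variance, so one must keep the $|1-\s^2|$ term explicit rather than normalizing first; working with a finite chaos decomposition guarantees $Y\in\mathbb D^{1,2}$ so all the Malliavin operators are well defined.

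For part~(ii), with $Y=\sum_{p=1}^N I_p(f_p)$ of unit variance, I would expand $\langle DY,-DL^{-1}Y\rangle_\fH$ using $DY=\sum_p pI_{p-1}(f_p)$ and $-DL^{-1}Y=\sum_q I_{q-1}(f_q)$, so that
\[
\langle DY,-DL^{-1}Y\rangle_\fH=\sum_{p,q=1}^N p\,\langle I_{p-1}(f_p),I_{q-1}(f_q)\rangle_\fH,
\]
then apply the product formula (Lemma~\ref{product_f}) to each inner product, writing it as a sum of multiple integrals $I_{p+q-2-2r}(\textup{sym}(f_p\otimes_{r+1}f_q))$. Taking the variance kills the $r$ such that $p+q-2-2r=0$ (these contribute only to the mean, already subtracted) and, by the chaos isometry \eqref{chaos_iso} and orthogonality, $\Var$ is bounded by a sum of squared norms $\|f_p\otimes_s f_q\|^2$ with $1\le s\le p\wedge q$, $s<p$ or $s<q$. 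Using Cauchy--Schwarz \eqref{CS_bdd} to reduce $\|f_p\otimes_s f_q\|$ to the diagonal contractions and the combinatorial bound \eqref{SeH} (which controls $\sum_r [r!\binom{p}{r}\binom{q}{r}]^2(p+q-2r)!\le 3^{p+q}p!q!$) to bound the multinomial prefactors, one collapses everything to $\M_p=\max_{1\le r\le p-1}\|f_p\otimes_r f_p\|$ and arrives at \eqref{MSbdd2}; the factor $2N\sum_p 3^{2p}p!$ comes from the double sum over $p,q\le N$ after the prefactor estimate. The main obstacle here is doing the bookkeeping of which contraction orders survive and producing clean constants: one has to be careful that a cross-term $\|f_p\otimes_s f_q\|$ with $p\ne q$ is dominated by $\max(\M_p,\M_q)$ times norms of $f_p,f_q$, which are $\le 1$ since $\Var(Y)=1$, and that no term of pure order $0$ (a constant) is left in the variance.

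Part~(iii) is the bilinear version of part~(ii): with $Y=\sum_{p=1}^N I_p(f_p)$ and $Y'=\sum_{q=1}^{N'}I_q(g_q)$ both of unit variance, one expands $\langle DY,-DL^{-1}Y'\rangle_\fH=\sum_{p,q}p\,\langle I_{p-1}(f_p),I_{q-1}(g_q)\rangle_\fH$ and runs the same product-formula-plus-isometry computation, now obtaining squared norms of the form $\|f_p\otimes_s g_q\|$; by Cauchy--Schwarz $\|f_p\otimes_s g_q\|\le\|f_p\otimes_{p-s}f_p\|^{1/2}\|g_q\otimes_{q-s}g_q\|^{1/2}$ via \eqref{C2b}, which splits the estimate into a piece controlled by $\M'_q$ and a piece controlled by $\M_p$, yielding the two-term bound \eqref{MSbdd3} with the asymmetric prefactors $\sqrt{N'}\sum_q 3^{2q}q!$ and $N\sum_p 3^{2p}p!$. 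The hardest part overall is really part~(ii)/(iii): organizing the expansion of the variance of the inner product so that every surviving term is visibly a contraction norm $\M_p$ (not merely a mixed norm), and tracking the constants through \eqref{SeH} and \eqref{sym_bdd} without losing the right powers of $N$ --- everything else (part~(i)) is a direct citation of the standard Malliavin--Stein inequality combined with an elementary Gaussian-to-Gaussian distance estimate.
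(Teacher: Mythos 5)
Your proposal is correct and follows essentially the same route as the paper: part (i) is a citation of the standard Malliavin--Stein bound, and parts (ii)--(iii) are obtained exactly as in the paper's proof, i.e.\ by expanding $\langle DY,-DL^{-1}Y'\rangle_\fH$ via the product formula, taking the variance with the chaos isometry \eqref{chaos_iso}, reducing the mixed contraction norms to the diagonal ones through \eqref{C2b} and Cauchy--Schwarz, and controlling the combinatorial prefactors with \eqref{SeH}. The only point worth tightening is your derivation of (i): going through the triangle inequality with $\NN(0,\s^2)$ imports a factor $2/\s^2$ from \cite[Theorem 5.1.3]{NP12} and a separate Gaussian-to-Gaussian constant, so it does not literally reproduce the constants in \eqref{MSbdd1}; the standard way is to use Stein's bound $\textup{dist}(Y,Z)\le 2\,\E\big|1-\langle DY,-DL^{-1}Y\rangle_\fH\big|$ directly and split $1-\langle DY,-DL^{-1}Y\rangle_\fH=(1-\s^2)+(\s^2-\langle DY,-DL^{-1}Y\rangle_\fH)$, using $\E[\langle DY,-DL^{-1}Y\rangle_\fH]=\s^2$ and Cauchy--Schwarz on the second piece.
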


The proof of Proposition \ref{MS_bdd} is postponed to Section \ref{S5}.

\subsection{Auxiliary results  on ASCLT} \label{SEC_aux}

In this section, we state a few auxiliary results  on $\ASCLT$.
The first result  (Lemma \ref{lem_red})
is useful in
reducing the proof of $\ASCLT$
to the case of finitely many chaoses; see
Lemma \ref{Red_step}.

\begin{lemma}\label{lem_red}
Let $F_t = M_t + R_t$ for $t\geq t_0$
for some given $t_0 > 0$.
Suppose $\{M_t: t\geq t_0\}$ satisfies the {\rm ASCLT} 
and 
there exists some   $\theta\in ( 0, 1)$ such that 
$\E( | R_t|) \les (\log t)^{-\theta}$ for all $t\geq t_0$.
Then,  $\{F_t: t\geq t_0\}$ also satisfies the $\ASCLT$.

\end{lemma}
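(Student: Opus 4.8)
The plan is to use the characterization of the $\ASCLT$ recorded in Remark \ref{equivdef}: since $\Phi=\{\phi_n\}$ is a countable separating family of bounded Lipschitz functions, it suffices to prove that for each bounded Lipschitz $g:\R\to\R$ (say $\|g\|_\infty\le K$ with Lipschitz constant $L$), one has $\frac{1}{\log T}\int_{t_0}^T g(F_t)\,\frac1t\,dt\to\E[g(Z)]$ almost surely as $T\to\infty$, where $Z\sim\NN(0,1)$; applying this with $g=\phi_n$ and intersecting the countably many resulting full-measure events yields the $\ASCLT$ for $\{F_t\}$ (the change of lower integration limit from $1$ to $t_0$ being itself covered by Remark \ref{equivdef}). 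Fix such a $g$ and write $g(F_t)=g(M_t)+\bigl(g(F_t)-g(M_t)\bigr)$. By the hypothesis that $\{M_t:t\ge t_0\}$ satisfies the $\ASCLT$, the average of $g(M_t)$ converges a.s.\ to $\E[g(Z)]$, so everything reduces to showing that the average of $g(F_t)-g(M_t)$ tends to $0$ almost surely.

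Since $g$ is simultaneously bounded and Lipschitz, $|g(F_t)-g(M_t)|=|g(M_t+R_t)-g(M_t)|\le\psi(R_t)$ with $\psi(x):=\min\{2K,\,L|x|\}$, so it is enough to prove that $B_T/\log T\to0$ almost surely, where $B_T:=\int_{t_0}^T\psi(R_t)\,\frac1t\,dt$ is \emph{nondecreasing} in $T$. Using Tonelli's theorem, the bound $\psi(R_t)\le L|R_t|$, the hypothesis $\E|R_t|\les(\log t)^{-\theta}$, and the substitution $u=\log t$, one gets $\E[B_T]\les\int^{\log T}u^{-\theta}\,du\asymp(\log T)^{1-\theta}$ for $\theta\in(0,1)$; this already yields $B_T/\log T\to0$ in $L^1$, and the real task is to upgrade it to an almost-sure statement.

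For this I would sample along the geometric sequence $T_n:=t_0\,e^n$ and consider the nonnegative increments $\Delta_n:=B_{T_{n+1}}-B_{T_n}$. The same estimate gives $\E[\Delta_n]=\int_{T_n}^{T_{n+1}}\E[\psi(R_t)]\,\frac1t\,dt\les\int_n^{n+1}u^{-\theta}\,du\les n^{-\theta}$ for $n$ large, hence $\E\bigl[\,\sum_{n\ge1}\Delta_n/n\,\bigr]=\sum_{n\ge1}\E[\Delta_n]/n\les\sum_{n\ge1}n^{-1-\theta}<\infty$, which is precisely where $\theta>0$ is used. Consequently $\sum_n\Delta_n/n<\infty$ almost surely, and Kronecker's lemma gives $B_{T_n}/n\to0$ a.s., i.e.\ $B_{T_n}/\log T_n\to0$ a.s.\ because $\log T_n\sim n$. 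Monotonicity of $T\mapsto B_T$ then interpolates between sampling times: for $T\in[T_n,T_{n+1}]$,
\[
0\le\frac{B_T}{\log T}\le\frac{B_{T_{n+1}}}{\log T_n}=\frac{B_{T_{n+1}}}{\log T_{n+1}}\cdot\frac{\log T_{n+1}}{\log T_n}\xrightarrow[n\to\infty]{}0,
\]
so $B_T/\log T\to0$ almost surely; combined with the $\ASCLT$ for $\{M_t\}$, this proves the claim.

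The step I expect to be the main obstacle is exactly this passage from $L^1$-decay to almost-sure decay of the remainder average. It is handled by exploiting the monotonicity of $B_T$ in $T$ to reduce to a geometric subsequence, on which the logarithmic gain $(\log t)^{-\theta}$ with $\theta>0$ makes $\sum_n\E[\Delta_n]/n$ a convergent series, so that a Borel--Cantelli / Kronecker argument closes the gap. Without a quantitative decay of $\E|R_t|$ of this type, mere convergence of $R_t$ to $0$ in probability would not suffice for this step.
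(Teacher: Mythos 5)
Your proof is correct, and while it shares the paper's overall architecture (reduce to the remainder average, prove almost sure convergence along a subsequence, interpolate), the mechanism is genuinely different. The paper works directly with the signed quantity $L_T=\frac{1}{\log T}\int_1^T[\phi(M_t+R_t)-\phi(M_t)]\frac{dt}{t}$, which is not monotone in $T$; to get summability of $\E|L_{T_K}|$ it must therefore take the \emph{sparse} subsequence $T_K=2^{K^m}$ with $m\theta>1$, and the interpolation between consecutive sampling times then relies on the boundedness of $\phi$ to control $\frac{1}{\log T}\int_{2^{K^m}}^T$, exploiting that $(K+1)^m-K^m=o(K^m)$. You instead majorize $|g(F_t)-g(M_t)|$ by the nonnegative quantity $\psi(R_t)$, so that $B_T$ is nondecreasing; this monotonization lets you use the standard geometric subsequence $T_n=t_0e^n$ together with $\sum_n\E[\Delta_n]/n<\infty$ and Kronecker's lemma, and the interpolation step becomes immediate from monotonicity alone (in particular, your remainder estimate uses only the Lipschitz property of $g$, not its boundedness, and works for any $\theta>0$). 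Both arguments are complete; yours is arguably the cleaner of the two, while the paper's has the feature of never discarding the sign of the integrand. One small point worth making explicit in either version is the implicit assumption that $t\mapsto R_t(\omega)$ (resp.\ $t\mapsto F_t(\omega)$) is jointly measurable so that Tonelli/Fubini applies; this is a standing hypothesis in the paper (cf.\ Proposition \ref{prop:IL}) and is satisfied in all applications.
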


\begin{proof}

Let $\phi: \R\to\R$ be any bounded Lipschitz function, 
then in view of Remark \ref{equivdef}, 
we can assume $t_0 = 1$ and 
 we  need to show 

\noi
\begin{align}
\frac{1}{\log T} \int_1^T \frac{1}{t} \phi(M_t + R_t)dt 
\xrightarrow[a.s.]{T\to+\infty} \int_\R \phi(x) \frac{1}{\sqrt{2\pi}}
e^{-\frac{x^2}2}dx. 
\label{Y1}
\end{align}

\noi
Since  \eqref{Y1} holds with $M_t+R_t$ replaced by 
$M_t$, it suffices to show that 

\noi
\begin{align*}
L_T:=
\frac{1}{\log T} 
\int_1^T \frac{1}{t} \big[\phi(M_t + R_t) - \phi(M_t)\big] dt 
\xrightarrow[a.s.]{T\to+\infty} 0.  
\end{align*}

\noi
Using $\E( | R_t|) \les (\log t)^{-\theta}$ with $\theta>0$
and the fact that $\phi$ is bounded Lipschitz, 
we can obtain

\noi
\begin{align*}
\begin{aligned}
  \E\big[ | L_{2^{K^m }} |  \big]  
&\les \frac{1}{K^{m}} \int_{1}^{2^{K^m}} \frac{1}{t \log^\theta t} dt \\
&= \frac{1}{K^{m}} \int_{0}^{K^m \log 2} \frac{1}{ y^\theta } dy
\quad\text{with $y = \log t$}\\
&\les K^{-m\theta},
\end{aligned}
\end{align*}

\noi
where $m\in\N$ is large enough such that $m\theta > 1$.
It follows that

\noi
\begin{align*}
   \E \sum_{K=0}^\infty | L_{2^{K^m}} |
\les\sum_{K=0}^\infty   K^{-m\theta} < +\infty,
\end{align*}

\noi
and thus,   $L_T$ converges almost surely to zero
along $T = 2^{K^m}$ with $K\in\N\to+\infty$.
Then, for any $T \geq 1$, one can find a unique nonnegative integer
$K = K_T$ such that 

\noi
\begin{align}  \notag 
2^{K^m} \leq T < 2^{(K+1)^m}.
\end{align}

\noi
Due to the boundedness of $\phi$, we can proceed as follows:

\noi
\begin{align} \notag 
L_T = \frac{1}{ \log T} \int_{2^{K^m}}^{T} 
\frac{1}{t} \big[\phi(M_t + R_t) - \phi(M_t)\big] dt 
+ \frac{\log 2^{K^m}}{\log T} L_{2^{K^m}},
\end{align}

\noi
where the second summand tends to zero almost surely
as $K\to\infty$ and the first term is uniformly bounded by 

\noi
\begin{align*}
   \frac{2\|\phi\|_\infty}{\log T} \big( \log T - \log 2^{K^m} \big)
 \leq  \frac{2\|\phi\|_\infty}{K^m} 
 \big[ (K+1)^m - K^m \big] ,
\end{align*}
which tends to zero as $K = K_T\to+\infty$.
Hence the proof of Lemma \ref{lem_red} is completed.
\qedhere

\end{proof}

\begin{proposition}[Ibragimov-Lifshits' criterion]
\label{prop:IL} 

A family of real-valued random variables $\{  F_t\}_{t\geq 1}$ 
satisfies the $\ASCLT$
if $t\in[1,\infty)\mapsto   F_t$ is measurable almost surely, 
and
the following inequality holds

\noi
\begin{align} 
\sup_{|s| \leq T} 
\int_2^\infty
\frac{\E \big[  |K_r(s)|^2\big]  }{r  \log r} dr <\infty   
\notag 
\end{align}

\noi
for any finite $T >0$, where 

\noi
\begin{align}\notag 
K_r(s) 
:= \frac{1}{\log r} \int_1^r 
\big( e^{is   F_t} - e^{-\frac{s^2}2} \big)  \frac{dt}{t},
\quad r \in( 1, \infty).
\end{align}

\end{proposition}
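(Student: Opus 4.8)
The plan is to adapt the classical Ibragimov-Lifshits argument to the continuous-time averaging measure $\nu_T^\omega$ from Definition \ref{ASCLT}. First I would recall that, by Remark \ref{equivdef}, it suffices to prove the almost sure convergence \eqref{equiv1} for every fixed bounded Lipschitz $g:\R\to\R$, and by a standard approximation argument it is enough to prove the complex-exponential version: for every fixed $s\in\R$,
\begin{align}\notag
\frac{1}{\log T}\int_1^T e^{i s F_t}\,\frac{dt}{t}\xrightarrow[\text{a.s.}]{T\to\infty} e^{-s^2/2}.
\end{align}
(One must be slightly careful here: pointwise convergence for countably many $s$ in a dense set, together with an equicontinuity/tightness argument for the random probability measures $\nu_T^\omega$, upgrades to weak convergence; this is exactly the reduction that underlies the equivalence in Remark \ref{equivdef} and is the reason the criterion is stated via exponentials.) Setting $G_T(s):=\frac{1}{\log T}\int_1^T e^{isF_t}\frac{dt}{t}$, the target is $G_T(s)\to e^{-s^2/2}$ a.s., and since trivially $\frac{1}{\log T}\int_1^T e^{-s^2/2}\frac{dt}{t}=e^{-s^2/2}\cdot\frac{\log T}{\log T}\to e^{-s^2/2}$ up to the harmless lower cutoff, this is equivalent to $K_T(s)\to 0$ a.s., where $K_r(s)$ is as defined in the statement.

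The core of the argument is the Gál--Koksma / Móricz-type strong law for the weighted averages $K_r(s)$. First I would show $\E[K_r(s)]\to 0$: this follows because $F_t\Rightarrow\NN(0,1)$ (which holds here since the hypothesis is stated for families already known to admit a CLT — indeed in our application $F_t=(Y_t-\E Y_t)/\sigma_t$ and the CLT is supplied by Theorem \ref{recap} together with the spectral CLT referenced in Section \ref{S1_2}), so $\E[e^{isF_t}]\to e^{-s^2/2}$ and Cesàro averaging in the $\frac{dt}{t}$ sense preserves the limit. The decisive step is the variance-type bound: the hypothesis
\begin{align}\notag
\sup_{|s|\le T}\int_2^\infty\frac{\E[|K_r(s)|^2]}{r\log r}\,dr<\infty
\end{align}
lets me run a subsequence argument. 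Fix $s$. Along $r_n:=e^{n^{1-\varepsilon}}$ (or any sequence with $\sum_n (\log r_n)^{-1}\log(r_{n+1}/r_n)<\infty$, e.g. $r_n$ with $\log r_n = n/\log^2 n$), the integral hypothesis combined with the monotonicity of $\log r$ gives $\sum_n \E[|K_{r_n}(s)|^2]<\infty$, hence $K_{r_n}(s)\to0$ a.s. by Borel--Cantelli (after Chebyshev). Then I interpolate: for $r\in[r_n,r_{n+1}]$ one writes $K_r(s)=\frac{\log r_n}{\log r}K_{r_n}(s)+\frac{1}{\log r}\int_{r_n}^r(e^{isF_t}-e^{-s^2/2})\frac{dt}{t}$, and the second term is bounded in absolute value by $\frac{2}{\log r}\log(r_{n+1}/r_n)\to0$ by the choice of the sequence, while the first term $\to0$ since $\frac{\log r_n}{\log r}\le1$ and $K_{r_n}(s)\to0$. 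Thus $K_r(s)\to0$ a.s. for each fixed $s$, and then an a.s. countable-density argument in $s$ plus continuity of $s\mapsto K_r(s)$ (uniform on compacts, which is where $\sup_{|s|\le T}$ in the hypothesis is used) yields the full weak convergence of $\nu_T^\omega$.

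The main obstacle, and the step deserving the most care, is the interpolation between the discrete subsequence $\{r_n\}$ and general $r$, because it forces a precise matching between the growth rate of $r_n$ (which must be fast enough that $\sum_n\E[|K_{r_n}(s)|^2]<\infty$ follows from the integral bound — roughly $\log r_n$ growing like $n^{1+\eta}$ worth of ``mass'' under $\frac{dr}{r\log r}$) and slow enough that $\log(r_{n+1}/r_n)/\log r_n\to0$ so the bridging increments are negligible. The standard resolution, which I would follow, is $\log r_n\sim n^{\beta}$ for a suitable $\beta>1$: then $\int_{r_n}^{r_{n+1}}\frac{dr}{r\log r}=\log\log r_{n+1}-\log\log r_n\asymp 1/n$, so $\sum_n\frac{1}{n}\sup_{r\in[r_n,r_{n+1}]}\E[|K_r(s)|^2]$ need not converge directly, and one instead uses that $r\mapsto\E[|K_r(s)|^2]$ need not be monotone, so a genuine maximal inequality (Móricz's inequality for the weighted partial sums, or a dyadic block estimate on $K_r$) is required rather than a naive comparison. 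I would cite the relevant version of this maximal inequality — this is the technical heart — and the rest is the routine Borel--Cantelli plus the continuity-in-$s$ bookkeeping described above. In our applications the quadratic hypothesis itself will be verified separately (it reduces, via \eqref{MSbdd1}--\eqref{MSbdd3} and the covariance estimates of Section \ref{S5}, to controlling $h_t(k_1,k_2)$ through Condition \ref{cond8}), so here it is legitimately taken as given.
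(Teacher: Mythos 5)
First, a point of reference: the paper does not actually prove this proposition — it states it and refers to \cite[Proposition 3.3 and Appendix A]{BXZ23} (and to \cite{IL99} for the discrete version). So your attempt can only be judged on its own merits, and on those merits the decisive step has a genuine gap. Your plan is to extract a subsequence $r_n$ with $\sum_n \E[|K_{r_n}(s)|^2]<\infty$, apply Chebyshev and Borel--Cantelli, and then interpolate. But the extraction does not follow from the hypothesis for the sequences you propose. With $\log r_n\sim n^{1-\eps}$ (or $n/\log^2 n$), the block $[r_n,r_{n+1}]$ carries $\frac{dr}{r\log r}$-mass $\log\log r_{n+1}-\log\log r_n\asymp n^{-1}$, so the integral hypothesis only yields $\sum_n n^{-1}\inf_{r\in[r_n,r_{n+1}]}\E[|K_r(s)|^2]<\infty$; since $r\mapsto\E[|K_r(s)|^2]$ is not monotone, this says nothing about $\sum_n\E[|K_{r_n}(s)|^2]$. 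Making the blocks carry bounded-below mass forces $\log r_{n+1}/\log r_n\geq 1+c$, which destroys your interpolation term $\log(r_{n+1}/r_n)/\log r_n\to 0$. You notice this tension yourself and defer to ``a genuine maximal inequality'' to be cited later — but that is precisely the heart of the proof, and leaving it as a citation placeholder means the argument is not closed. A secondary issue: you invoke the CLT $F_t\Rightarrow\NN(0,1)$ to get $\E[K_r(s)]\to0$. The proposition assumes no CLT, only measurability and the integral bound, so this imports a hypothesis not present in the statement (it is also unnecessary, since any route that gives $K_r(s)\to0$ a.s. controls the mean for free).

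The standard Ibragimov--Lifshits argument closes this gap without any maximal inequality, and more elementarily than your plan. By Tonelli, the hypothesis gives, for each fixed $s$, $\int_2^\infty \frac{|K_r(s)|^2}{r\log r}\,dr<\infty$ almost surely. Since $\int_2^\infty\frac{dr}{r\log r}=\infty$, this already forces $\liminf_{r\to\infty}|K_r(s)|=0$ a.s. The upgrade to $\lim=0$ uses the deterministic rigidity you already wrote down in your interpolation step, namely
\begin{align*}
\Big|K_{r'}(s)-\tfrac{\log r}{\log r'}K_r(s)\Big|\leq \frac{2(\log r'-\log r)}{\log r'},\qquad r'\geq r,
\end{align*}
run in the contrapositive direction: if $|K_r(s)|\geq 2\eps$ for arbitrarily large $r$, then $|K_{r'}(s)|\geq\eps$ on the whole block $\{r':\log r\leq\log r'\leq(1+\delta)\log r\}$ for a suitable $\delta=\delta(\eps)$, and each such block contributes at least $\eps^2\log(1+\delta)$ to the integral $\int\frac{|K_u(s)|^2}{u\log u}du$; infinitely many disjoint blocks contradict its a.s. finiteness. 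This replaces your Borel--Cantelli-plus-subsequence scheme entirely. The remaining bookkeeping — passing from a.s. convergence at each fixed $s$ to a.s. weak convergence of $\nu_T^\omega$, which is where the $\sup_{|s|\leq T}$ in the hypothesis and the countable-density argument enter — is standard and your sketch of it is acceptable.
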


In Ibragimov-Lifshits' original paper \cite{IL99},
their criterion is stated for the  discrete sum,
we refer interested readers to \cite[Proposition 3.3]{BXZ23}
for the above integral version and see Appendix A therein
for a proof.  In what follows, 
we combine  Ibragimov-Lifshits' method of characteristic 
functions with the above Malliavin-Stein bounds
to derive conditions to ensure the $\ASCLT$. 
Note that this combination was first established in 
the paper \cite{BNT} by Bercu, Nourdin, and Taqqu.
See also \cite{CZthesis, Zheng17,AN19, BXZ23, XZ24}
with the same flavor. 
Note that the Ibragimov-Lifshits'  criterion 
essentially requires 
a quantitative control
over the difference of characteristic functions 
of the random probability $\nu_T^\omega$ in (1.17)
and of the standard normal.
Concretely, we need  to control
 the above quantity $\mathbb{E}\big[ |K_r(s)|^2 \big]$
 and as we shall see shortly in the proof of Proposition 
 \ref{prop210}, the estimation of $\mathbb{E}\big[ |K_r(s)|^2 \big]$
 can be easily reduced to
 bounding the  Wasserstein distance $W_1(F_t, \NN(0,1))$.
 To this end, the Malliavin-Stein method enters the picture naturally.

\begin{proposition}\label{prop210}
Suppose that  $F_t$ is a real centered random variable 
with   variance one and  finite chaotic expansion
for each $t \geq 1$.
Assume that there is some constant $X \geq  e$ such that 
the following conditions hold:
\begin{itemize}

\item[\bf (a)]  for $t_2 \geq t_1 \geq X$ and some $\beta_1>0$, we have 
  \[
  \Cov(F_{t_1}, F_{t_2} ) 
\les  \left(\frac{t_1}{t_2}\right)^{\be_1}  ;
\]

\item[\bf (b)]  for any $t_1, t_2\in [X, \infty)$ and some $\beta_2,\beta_3\in(0,1)$, we have  
\[
\Var \big( \langle DF_{t_1}, -DL^{-1} F_{t_2} \rangle_\fH \big)
\les   \log^{-\be_2}(t_1) 
+  \log^{-\be_3}(t_2)\,. 
\]
\end{itemize}
Then, $\{F_t: t\geq 1\}$ satisfies the   $\ASCLT$.

\end{proposition}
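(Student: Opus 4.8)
\textbf{Proof proposal for Proposition~\ref{prop210}.}
The plan is to verify the Ibragimov--Lifshits criterion (Proposition~\ref{prop:IL}) for the family $\{F_t\}_{t\ge 1}$. Fix a finite $T>0$; we must bound $\sup_{|s|\le T}\int_2^\infty \E[|K_r(s)|^2]\,\frac{dr}{r\log r}$. Writing out the square,
\begin{align*}
\E\big[|K_r(s)|^2\big]
=\frac{1}{\log^2 r}\int_1^r\!\!\int_1^r
\E\Big[\big(e^{isF_{t_1}}-e^{-s^2/2}\big)\overline{\big(e^{isF_{t_2}}-e^{-s^2/2}\big)}\Big]
\frac{dt_1\,dt_2}{t_1 t_2}.
\end{align*}
By symmetry in $(t_1,t_2)$ it suffices to bound the integrand on $\{t_1\le t_2\}$. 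The standard device (as in \cite{BNT}) is to split the expectation as
\[
\E\big[(e^{isF_{t_1}}-e^{-s^2/2})\overline{(e^{isF_{t_2}}-e^{-s^2/2})}\big]
= \E\big[e^{isF_{t_1}}(\,\overline{e^{isF_{t_2}}}-e^{-s^2/2})\big]
+ e^{-s^2/2}\,\E\big[e^{isF_{t_1}}-e^{-s^2/2}\big],
\]
the first term of which is controlled by the joint regularity of $(F_{t_1},F_{t_2})$ and the second by the one-variable normal approximation. First I would handle the single-variable term: since each $F_t$ has unit variance and a finite chaos expansion, the Malliavin--Stein bound \eqref{MSbdd1}--\eqref{MSbdd2} gives $d_{\rm W}(F_t,Z)\les \sqrt{\Var(\langle DF_t,-DL^{-1}F_t\rangle_\fH)}\les \log^{-\beta_2/2}(t)$ by hypothesis (b) with $t_1=t_2=t$; hence $|\E[e^{isF_t}]-e^{-s^2/2}|\le |s|\,d_{\rm W}(F_t,Z)\les |s|\log^{-\beta_2/2}(t)$, using that $x\mapsto e^{isx}$ is $|s|$-Lipschitz.

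For the cross term I would use the identity $e^{isF_{t_1}}\,\overline{e^{isF_{t_2}}}=e^{is(F_{t_1}-F_{t_2})}$ together with a covariance-identity argument in the spirit of \cite[Proof of Theorem~3.2]{BNT}: introduce $\Psi(t_1,t_2,s):=\E[e^{is(F_{t_1}-F_{t_2})}] - e^{-s^2/2}$ and estimate it by an interpolation/integration-by-parts on Wiener space. Concretely one writes the derivative in a parameter joining $F_{t_1}$ to an independent copy and bounds the remainder by a combination of $\E[|F_{t_1}-F_{t_2}|^2]$-type quantities and $\Var(\langle DF_{t_1},-DL^{-1}F_{t_2}\rangle_\fH)$. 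Hypothesis (a) gives $\E[(F_{t_1}-F_{t_2})^2]=2-2\Cov(F_{t_1},F_{t_2})\ge 0$ and, more to the point, $\Cov(F_{t_1},F_{t_2})\les (t_1/t_2)^{\beta_1}$, so the ``off-diagonal decay'' term contributes $(t_1/t_2)^{\beta_1}$; hypothesis (b) contributes $\log^{-\beta_2}(t_1)+\log^{-\beta_3}(t_2)\les \log^{-\beta}(t_1)$ with $\beta:=\min(\beta_2,\beta_3)$ once $t_1\le t_2$. Altogether one arrives at a bound of the form
\[
\big|\E[(e^{isF_{t_1}}-e^{-s^2/2})\overline{(e^{isF_{t_2}}-e^{-s^2/2})}]\big|
\les C_T\Big( (t_1/t_2)^{\beta_1} + \log^{-\beta}(t_1)\Big)\qquad (X\le t_1\le t_2),
\]
where $C_T$ depends only on $T\ge |s|$ (polynomially in $T$).

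It remains to insert this into the double integral and check convergence. For the piece $r\le X$ the contribution is a bounded constant (uniformly in $|s|\le T$) and integrates to something finite against $\frac{dr}{r\log r}$ over $[2,X]$ (shrinking or enlarging $X$ harmlessly, as in Remark~\ref{equivdef}); so assume $r\ge X$. Then
\[
\E[|K_r(s)|^2]\les \frac{C_T}{\log^2 r}\int_X^r\!\!\int_X^{t_2}\Big((t_1/t_2)^{\beta_1}+\log^{-\beta}(t_1)\Big)\frac{dt_1\,dt_2}{t_1 t_2}.
\]
The first summand: $\int_X^{t_2}(t_1/t_2)^{\beta_1}\frac{dt_1}{t_1}=\beta_1^{-1}(1-(X/t_2)^{\beta_1})\les 1$, so it contributes $\les \frac{1}{\log^2 r}\int_X^r\frac{dt_2}{t_2}\les \frac{1}{\log r}$. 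The second summand: substituting $y=\log t_1$, $\int_X^{t_2}\log^{-\beta}(t_1)\frac{dt_1}{t_1}=\int_{\log X}^{\log t_2}y^{-\beta}dy \les \log^{1-\beta}(t_2)$ since $\beta\in(0,1)$, hence it contributes $\les \frac{1}{\log^2 r}\int_X^r \log^{1-\beta}(t_2)\frac{dt_2}{t_2}\les \frac{1}{\log^2 r}\log^{2-\beta}(r)=\log^{-\beta}(r)$. Therefore $\E[|K_r(s)|^2]\les C_T\,\log^{-\beta}(r)$ for $r\ge X$, uniformly in $|s|\le T$, and
\[
\int_X^\infty \frac{\E[|K_r(s)|^2]}{r\log r}\,dr \les C_T\int_X^\infty \frac{dr}{r\,\log^{1+\beta}(r)} <\infty
\]
because $\beta>0$. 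Combining with the bounded piece over $[2,X]$ yields the required uniform finiteness, and Proposition~\ref{prop:IL} gives the $\ASCLT$ for $\{F_t\}_{t\ge 1}$.

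\medskip
\noindent\emph{Main obstacle.} The routine parts are the two parameter integrals and the one-variable normal approximation. The genuinely delicate step is the estimate of the cross term $\E[(e^{isF_{t_1}}-e^{-s^2/2})\overline{(e^{isF_{t_2}}-e^{-s^2/2})}]$: one must run a smart-path interpolation on Wiener space to produce the clean split into an off-diagonal covariance term (handled by (a)) and a Malliavin-derivative variance term (handled by (b)), making sure the Lipschitz/boundedness constants of $e^{isx}$ absorb all the $s$-dependence into a harmless $C_T$. This is exactly the point where the finite-chaos assumption and the Malliavin calculus machinery (product formula, $L^{-1}$, the bound \eqref{MSbdd3}) enter, and it is where the argument of \cite{BNT} must be adapted to the present continuous-time, two-parameter setting.
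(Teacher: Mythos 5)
Your overall architecture is the paper's: verify the Ibragimov--Lifshits criterion, split $\E[|K_r(s)|^2]$ into a diagonal (one-variable) piece and a cross piece, feed hypothesis (b) into the former and hypotheses (a)+(b) into the latter, and then run the two elementary parameter integrals. Those final integrals, and the treatment of the single-variable term via \eqref{MSbdd1}--\eqref{MSbdd2}, are correct as you wrote them (modulo a harmless algebraic slip: your displayed splitting of $\E[(e^{isF_{t_1}}-e^{-s^2/2})\overline{(e^{isF_{t_2}}-e^{-s^2/2})}]$ drops the term $-e^{-s^2/2}(\overline{\E[e^{isF_{t_2}}]}-e^{-s^2/2})$, and the Gaussian reference for $F_{t_1}-F_{t_2}$ should be $e^{-s^2}$, not $e^{-s^2/2}$, since $\Var(F_{t_1}-F_{t_2})\approx 2$).

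The gap is exactly the step you flag as the ``main obstacle'': you never actually establish the cross-term bound, deferring it to a ``smart-path interpolation on Wiener space'' that ``must be adapted.'' No interpolation is needed, and this is where your proposal falls short of a proof. The random variable $Y:=(F_{t_1}-F_{t_2})/\sqrt{2}$ is centered with a finite chaotic decomposition, so the Malliavin--Stein bound \eqref{MSbdd1} applies to it directly and gives
\[
\big|\E[e^{is(F_{t_1}-F_{t_2})}]-e^{-s^2}\big|
\;\les\; d_{\rm TV}\Big(\tfrac{F_{t_1}-F_{t_2}}{\sqrt{2}},Z\Big)
\;\leq\; 2\,\big|1-\Var(Y)\big|
+2\sqrt{\Var\big(\langle DY,-DL^{-1}Y\rangle_\fH\big)}.
\]
Since $\Var(Y)=1-\Cov(F_{t_1},F_{t_2})$, the first term is exactly $2|\Cov(F_{t_1},F_{t_2})|$, which is what hypothesis (a) controls; and by bilinearity of $(U,V)\mapsto\langle DU,-DL^{-1}V\rangle_\fH$ the second term is bounded by $\sum_{i,j=1}^{2}\sqrt{\Var(\langle DF_{t_i},-DL^{-1}F_{t_j}\rangle_\fH)}$, which is what hypothesis (b) controls. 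This closes the only missing step, and the rest of your computation then goes through verbatim with $\beta_0=\min\{\beta_2,\beta_3\}$.
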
 

Later, we will apply the above proposition with
 $F_t$  
the normalized truncation of $Y_t$ up to finitely many chaoses 
as in \eqref{def_FT}. That is why we only stick to the case
where $F_t$ has finite chaotic expansion, while 
to keep the notation light, we have only introduced 
the Malliavin operators for random variables with 
finite chaotic expansions.
Of course, the above result can be easily generalized to 
the case where $F_t$ is twice Malliavin differentiable
with $\E[ \| DF_t \|^4_\fH ] <\infty$;
see, e.g., \cite{BNT}.
Note also that the restriction $X\geq e$ is immaterial 
and kept for convenience, and it is used, e.g., in 
\eqref{X_e} and \eqref{be45_y}.

\begin{proof}[Proof of Proposition \ref{prop210}]
In view of Ibragimov-Lifshits' criterion (Proposition \ref{prop:IL}),
it suffices to show that 
\[
\E \big[  |K_r(s)|^2\big] \les \frac{1}{\log^{\be_0}( r)}
\]
uniformly in $s\in\R_+$, for some   constant $\be_0 > 0$.
For any given $X > 0$, it is easy to see that 
\[
\Big| \frac{1}{\log r} \int_1^X 
\big( e^{is   F_t} - e^{-\frac{s^2}2} \big)  \frac{dt}{t} \Big|
\leq \frac{2\log X}{\log r},
\]
so that it is enough to show 
\begin{align}\label{eng1}
\E\bigg[  \bigg|  \frac{1}{\log r} \int_X^r 
\big( e^{is   F_t} - e^{-\frac{s^2}2} \big)  \frac{dt}{t}  \bigg|^2 \bigg]
\les  \frac{1}{\log^{\be_0}( r)}.
\end{align}

It is standard to expand the square in \eqref{eng1}
and relate the resulting expressions to the distances \eqref{def_dist},
from which one can apply the Malliavin-Stein bounds; 
see, e.g., \cite[Theorem 3.2]{BNT} and \cite[Section 3.2]{BXZ23}.
Indeed, we can first express left side of \eqref{eng1}
as 
\begin{align*}
\frac{1}{\log^2(r)} \int_{X}^r  \int_{X}^r \frac{1}{t_1t_2} 
\E\big[ e^{is (F_{t_1} -F_{t_2} ) }
& - e^{-s^2}  \big] dt_1 dt_2
 - e^{-\frac{s^2}2} \frac{\log r -\log X}{\log^2 (r)} 
   \int_{X}^r \frac{1}{t_1} 
\E\big[ e^{is F_{t_1} }
 - e^{-\frac{s^2}{2}}  \big] dt_1   \\
 &- e^{-\frac{s^2}2} \frac{\log r -\log X}{\log^2 (r)} 
   \int_{X}^r \frac{1}{t_2} 
\E\big[ e^{-is F_{t_2} }
 - e^{-\frac{s^2}{2}}  \big] dt_2 ,
\end{align*}

\noi
which can be further bounded by

\noi
\begin{align*}
\frac{4}{\log^2(r)} \int_{X}^r  \int_{X}^r \frac{1}{t_1t_2} 
d_{\rm TV}( \tfrac{F_{t_1} -F_{t_2} }{\sqrt{2} } , Z ) dt_1 dt_2
+\frac{2}{\log (r)} 
   \int_{X}^r \frac{1}{t} 
d_{\rm TV}( F_{t}    , Z ) dt
:=\T_1 + \T_2 ,
\end{align*}

\noi
 using $ | \E[ e^{is Y_1} -e^{is Y_2}] | \leq 4 d_{\rm TV}( Y_1, Y_2)$.

Now using the Malliavin-Stein bound \eqref{MSbdd1}
with condition (b), we get   
\begin{align}\label{X_e}
\T_2 \les
 \frac{1}{\log (r)} 
   \int_{X}^r \frac{1}{t} 
 \log^{-\min\{\be_2,\be_3\}}( t) dt \les   \frac{1}{\log^{\min\{\be_2,\be_3\}} (r)} .
\end{align}

\noi
To estimate the first term $\T_1$, we first 
observe that 
$\Var\big( \tfrac{F_{t_1} -F_{t_2} }{\sqrt{2} } \big) = 1 - \Cov(F_{t_1} , F_{t_2} ) $
and then we apply the Malliavin-Stein bound \eqref{MSbdd1}
to get

\noi
\begin{align}\label{term_T1}
\begin{aligned}
\T_1 &
\les 
\frac{1}{\log^2(r)} \int_{X}^r  \int_{X}^r \frac{1}{t_1t_2} 
|   \Cov(F_{t_1} , F_{t_2} )  |  dt_1 dt_2 \\
&\qquad\qquad + \frac{1}{\log^2(r)} \int_{X}^r  \int_{X}^r \frac{1}{t_1t_2} 
 \sum_{i, j=1}^2 \sqrt{ \Var \big(  \jb{DF_{t_i},  -DL^{-1} F_{t_j} }_{\fH}  }
 \,\,  dt_1 dt_2, 
\end{aligned}  
\end{align}
where the second term can be easily dealt with
using condition (b).
Indeed, with $\be_2\in (0,1)$,
one can easily see   that 

\noi
\begin{align*}
\frac{1}{\log^2(r)} \int_{X}^r  \int_{X}^r \frac{1}{t_1t_2} 
  \log^{-\be_2}(t_1)      dt_1 dt_2
& \les \frac{1}{\log^{\beta_2}(r)}\,,
\end{align*}

\noi
and in this way, we can get
\begin{align}\label{be45_y}
\begin{aligned}
&\frac{1}{\log^2(r)} \int_{X}^r  \int_{X}^r \frac{1}{t_1t_2} 
 \sum_{i, j=1}^2 \sqrt{ \Var \big(  \jb{DF_{t_i},  -DL^{-1} F_{t_j} }_{\fH}  }
 \,\,  dt_1 dt_2 \\
 &\qquad\qquad\qquad
 \les   \frac{1}{\log^{\be_2}(r)} + \frac{1}{\log^{\be_3}(r)}\les \frac{1}{\log^{\min\{\beta_2,\beta_3\}}(r)} .
\end{aligned}
\end{align}
\noi
And the first term in \eqref{term_T1} can be estimated using condition (a) as follows:

\noi
\begin{align*} 
\begin{aligned}
    &\frac{1}{\log^2(r)}\int_X^r\int_X^r
      \big| \Cov(F_{t_1}, F_{t_2} )  \big|  \frac{dt_1}{t_1}\frac{dt_2}{t_2} 
    \les 
    \frac{1}{\log^2(r)}
    \int_X^r\frac{dt_2}{t_2}\int_X^{t_2}\frac{dt_1}{t_1}
     \Big(\frac{t_1}{t_2} \Big)^{\be_1}\,\les\frac{1}{\log(r)}\,.
\end{aligned}
\end{align*}

Therefore, combining the above estimates for $\T_1$ and $\T_2$,
we can see that 
\eqref{eng1} holds with $\be_0 =\min\{ \be_2,\be_3\} \in(0,1)$. 
Hence 
$\{F_t: t\geq 1\}$ satisfies the   $\ASCLT$.
\qedhere

\end{proof}

\section{Main proofs}\label{SEC3}

\subsection{Proofs of QCLT and ASCLT }\label{SEC3_1}

As announced in our introduction, we will first 
reduce the problem to  the case of finitely many chaoses
by using Lemma \ref{lem_red} 
and then prove the $\ASCLT$ for the truncated version.
As we will see shortly, the obtention of the {\bf\textsf{QCLT}} 
(i.e., the bound \eqref{QCLT_bdd1} in part {\bf (1)}) 
is a by-product of this argument.

Let us recall from \eqref{def_Y} 
that 
\[
Y_t  - \E[Y_t]= \sum_{q= R}^\infty \int_{tD} a_q H_q(B_x) dx,
\] 
where  $R \geq 1$ is the Hermite rank of $\varphi$, and we 
truncate the above series up to $N$:
\begin{align}\label{chaos_YTN}
Y_{t, N} =  \sum_{q= R}^N \int_{tD} a_q H_q(B_x) dx.
\end{align}

\noi
We put $\s_t^2 =\Var( Y_t)$ and $\s_{t, N}^2 = \Var( Y_{t, N})$, i.e.,

\noi
\begin{align}\label{s_tn}
\s^2_{t, N} =  \sum_{q= R}^N  a^2_q q! \int_{(tD)^2} \cC^q(x-y)  dxdy
\end{align}
 and $\s^2_t = \s^2_{t,\infty} \in [0,\infty]$.  
Later, we will let $N = N_t$ depend on $t$ 
as in \eqref{def_NT}.

 \begin{lemma}[Reduction step] \label{Red_step}
 Suppose that the Conditions \ref{cond5},  \ref{cond6}, and  \ref{cond7} hold.
 Let $Y_t,  Y_{t, N}$ be given as above. 
 Then, there is some integer $M\geq 1$ independent of $t$
 and there is some $t_0 > 0$ 
  such that 
 
 \noi
 \begin{align} \label{red1a}
\E\Big[  \big| \frac{1}{\s_t} Y_t -  \frac{1}{\s_{t, N}}  Y_{t, N} \big|^2 \Big]
   \les  N^{-\frac{d}{\al}}
    \end{align}
 for any $N\geq M$ and $t\geq t_0$.
 
 \end{lemma}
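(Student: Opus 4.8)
The plan is to exploit the orthogonality of Wiener chaoses to split the $L^2$-error into two kinds of contributions: the ``tail'' chaoses $q>N$ that are dropped entirely, and the discrepancy between normalizing by $\s_t$ versus $\s_{t,N}$. First I would write
\[
\frac{1}{\s_t}Y_t - \frac{1}{\s_{t,N}}Y_{t,N}
= \frac{1}{\s_t}\big(Y_t - Y_{t,N}\big) + \Big(\frac{1}{\s_t}-\frac{1}{\s_{t,N}}\Big) Y_{t,N},
\]
so that, since $Y_t-Y_{t,N}$ lives in chaoses $q>N$ and $Y_{t,N}$ in chaoses $R\le q\le N$, and since the two normalizations here act on orthogonal pieces after we further decompose, we get (up to a factor $2$ from $(a+b)^2\le 2a^2+2b^2$)
\[
\E\Big[\big|\tfrac{1}{\s_t}Y_t-\tfrac{1}{\s_{t,N}}Y_{t,N}\big|^2\Big]
\le \frac{2}{\s_t^2}\,\Var(Y_t-Y_{t,N}) + 2\Big(1-\frac{\s_{t,N}}{\s_t}\Big)^{\!2},
\]
using $\Var(Y_{t,N})=\s_{t,N}^2$ in the second term. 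Both terms are governed by the ratio $\s_{t,N}^2/\s_t^2$, because $\Var(Y_t-Y_{t,N})=\s_t^2-\s_{t,N}^2$, so in fact
\[
\E\Big[\big|\tfrac{1}{\s_t}Y_t-\tfrac{1}{\s_{t,N}}Y_{t,N}\big|^2\Big]
\le 2\Big(1-\frac{\s_{t,N}^2}{\s_t^2}\Big) + 2\Big(1-\frac{\s_{t,N}}{\s_t}\Big)^{\!2}
\le 4\Big(1-\frac{\s_{t,N}^2}{\s_t^2}\Big),
\]
the last step since $0\le \s_{t,N}\le\s_t$ gives $(1-\s_{t,N}/\s_t)^2\le 1-\s_{t,N}^2/\s_t^2$. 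Thus everything reduces to showing $1-\s_{t,N}^2/\s_t^2 \les N^{-d/\al}$ uniformly in $t\ge t_0$, for $N$ beyond some fixed $M$.

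**The main estimate.** The heart of the matter is therefore a uniform-in-$t$ lower bound on $\s_{t,N}^2/\s_t^2$, equivalently a bound on the tail
\[
\s_t^2-\s_{t,N}^2 = \sum_{q>N} a_q^2\,q!\int_{(tD)^2}\cC^q(x-y)\,dx\,dy.
\]
Here is where Condition \ref{cond5} and Condition \ref{cond6} enter, via Lemma \ref{tech1}: they give $\int_{\R^d}|\cC(z)|^q\,dz \le c\,q^{-d/\al}$ for $q$ large, and hence by the standard change of variables $\int_{(tD)^2}|\cC^q(x-y)|\,dx\,dy \le t^d\Vol(D)\int_{\R^d}|\cC(z)|^q\,dz \le c\,t^d\,q^{-d/\al}$. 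Summing, $\sum_{q>N}a_q^2 q!\,\|\cC^q\|_{L^1} \les t^d \sum_{q>N}a_q^2 q!\,q^{-d/\al} \les t^d N^{-d/\al}$ (using $\sum a_q^2 q!<\infty$ — in fact one gets the bound with an extra factor tending to $0$, but $N^{-d/\al}$ suffices; more carefully $\sum_{q>N} a_q^2 q!\,q^{-d/\al}\le N^{-d/\al}\sum_{q>N}a_q^2 q! $, which is $\les N^{-d/\al}$). So $\s_t^2-\s_{t,N}^2 \les t^d N^{-d/\al}$. It then remains to produce a matching \emph{lower} bound $\s_t^2 \ges t^d$, so that the ratio $(\s_t^2-\s_{t,N}^2)/\s_t^2 \les N^{-d/\al}$. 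This is exactly the content I would extract from Lemma \ref{tech2} (the ``explicit limiting order of $\s_t^2$'' promised after Theorem \ref{thm:main}): under Condition \ref{cond7} one knows $\s_t^2$ grows at least like $t^d$ (and in the regularly-varying cases (c2)/(c3) it is $t^d$ times a regularly varying function, which by Lemma \ref{lemP}(ii) cannot decay, while in case (c1) $\s_t^2/t^d\to\s^2\in(0,\infty)$ by Breuer--Major, positivity of $\s^2$ being guaranteed by $\cC^R\ge0$ or non-oddness). In all three sub-cases of Condition \ref{cond7} one gets $\s_t^2 \ges t^d$ for $t\ge t_0$, which is also where the threshold $t_0$ and the $M$ in the statement come from — $M$ large enough that Lemma \ref{tech1}'s bound $\|\cC^q\|_{L^1}\les q^{-d/\al}$ holds for $q\ge M$ (so $M\ge d/\dl+1$), and $t_0$ large enough that $\s_t^2>0$ and the lower bound kicks in.

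**Main obstacle.** The routine part is the chaos bookkeeping and the $L^1$-moment bound; the genuinely delicate point is the uniform-in-$t$ lower bound $\s_t^2 \ges t^d$, because in cases (c2) and (c3) $\s_t^2$ is only known to be $t^d$ times a slowly/regularly varying function, and one must invoke Lemma \ref{lemP} (the non-negativity of the regular-variation index when the limit exists in $(0,\infty]$, part (ii), together with Potter's bounds, part (iv)) to rule out that this factor decays to zero. Managing the interplay of this with $\cC^R\ge0$ in case (c2) — needed so that $w_{r,R}$ is nondecreasing and the regular-variation machinery applies — and with the ball assumption and the condition $M>d/\dl-1$ in case (c3) is the substantive work; but since Lemma \ref{tech2} is already available from the earlier part of the paper, I would simply cite it for the statement $\s_t^2\asymp t^d \,\ell(t)$ with $\ell$ slowly varying and bounded below, and the rest follows as sketched.
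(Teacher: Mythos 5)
Your proposal is correct and follows essentially the same route as the paper: the same splitting via $(a+b)^2\le 2a^2+2b^2$, the same reduction to $1-\s_{t,N}^2/\s_t^2$, and the same two inputs (Lemma \ref{tech1} for $\int_{\R^d}|\cC(z)|^N dz\les N^{-d/\al}$ and Lemma \ref{tech2} for $\s_t^2\ges t^d$). The only cosmetic difference is that the paper bounds the tail by factoring $|\cC|^q\le|\cC|^N$ once, whereas you apply the moment bound to each $q>N$ separately; both give $\les t^d N^{-d/\al}$.
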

 
 \begin{proof}[Proof of Lemma \ref{Red_step}]
 By simple algebra and \eqref{s_tn}
 with $|\cC| \leq 1$, 
 we can  write 
 
 \noi
 \begin{align*}
 \E\Big[  \big| \tfrac{1}{\s_t} Y_t -  \tfrac{1}{\s_{t, N}}  Y_{t, N} \big|^2 \Big]
&\leq 2     \big\| \tfrac{1}{\s_t} (Y_t  -  Y_{t, N} ) \big\|_2^2
+  2 \big\| ( \tfrac{1}{\s_t}  -   \tfrac{1}{\s_{t, N}})  Y_{t, N} \big\|_2^2  \\
& \leq 2 \frac{\s_t^2 - \s^2_{t, N}}{\s_t^2} 
+ 2 \frac{ (\s_t - \s_{t, N})^2}{\s_t^2} = 4  \frac{\s_t - \s_{t, N}}{\s_t} \\
&\leq 4  \frac{\s^2_t - \s^2_{t, N}}{\s^2_t} 
= \frac{4}{\s_t^2} \sum_{q=N+1}^\infty  a^2_q q! \int_{(tD)^2} \cC^q(x-y)  dxdy\\
&\leq  \bigg( 4 \sum_{q=N+1}^\infty  a^2_q q! \bigg)
\frac{\Vol(tD)}{\s_t^2} \int_{\R} | \cC(z)|^N dz \\
&\les \frac{\Vol(tD)}{\s_t^2} \int_{\R} | \cC(z)|^N dz, 
 \end{align*}
 
 \noi
where we used  the fact $\sum_{q=R}^\infty  a^2_q q! <\infty$
in the last step. Note that
Conditions \ref{cond7} and   \ref{cond5} ensure
the existence of $t_0 > 0$ such that 
$\s_t^2 \ges t^d$ for any $t\geq t_0$ (see Lemma \ref{tech2}), 
while Conditions \ref{cond5} and \ref{cond6} imply that 
$ \int_{\R} | \cC(z)|^N dz \les N^{-d/\al}$ for any $N \geq \frac{d}\dl + 1$
(see Lemma \ref{tech1}).
Hence, the bound \eqref{red1a} follows immediately. 
\qedhere

 \end{proof}
 
 In view of the bound  \eqref{red1a}
 and Lemma \ref{lem_red}, in order to show the $\ASCLT$
 for $Y_t/\s_t $, it suffices to show
 that for 
 
\noi
 \begin{align} \label{def_FT}
 F_t := \frac{1}{\s_{t, N_t}} Y_{t, N_t} 
 \quad\text{with  $N_t \sim \log^\theta(t)$}
 \end{align}
 for some   $\theta > 0$. In fact, 
 we will prove it for $\theta$
subject to the restriction \eqref{th0}.
In particular, $N_t$ can be chosen larger if 
the Gaussian field $\bfB$ exhibits stronger local independence;
consequently, the approximation of $Y_t$ 
by the truncation $Y_{t,N_t}$ improves 
as the local parameter $\alpha$ decreases (see Condition 1.6).

 Note that under the Condition \ref{cond5} and Condition \ref{cond7},
Lemma \ref{tech2} implies the existence of
 two constants $t_0, M >0$ such that
 $\s_{t, N} > 0$ for $t\geq t_0$ and $N\geq M$.
 In this case, the random variable $F_t$ is well defined for large $t$,
 and for our purpose of proving the $\ASCLT$, 
 we can just assume $F_t = 1$ for $t < t_0$ (see Remark \ref{equivdef}) 
 and we will 
 not  mention this point any more.

\begin{proposition}\label{prop_FT}

Let $\{F_t: t\geq 1\}$ be defined as in \eqref{def_FT}.
Suppose that Conditions 
\ref{cond5},
\ref{cond6},
\ref{cond7},
and \ref{cond8}
hold. 
Then, $\{F_t: t\geq 1\}$ satisfies the $\ASCLT$.

\end{proposition}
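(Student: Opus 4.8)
The plan is to deduce Proposition \ref{prop_FT} from Proposition \ref{prop210}, applied to the family $\{F_t\}$ of \eqref{def_FT}. Each $F_t$ is centered, has variance one and a finite chaotic expansion (of orders $R,\dots,N_t$), so it only remains to verify the covariance bound (a) and the contraction bound (b) of Proposition \ref{prop210}, for $t_1,t_2$ beyond a fixed threshold $X$. Throughout I would use freely the variance estimates of Lemmas \ref{tech2}--\ref{tech3} (so that $\sigma_{t,N_t}^2\asymp\sigma_t^2\ges t^d$ for $t$ large), the moment bound $\int_{\R^d}|\cC(z)|^N\,dz\les N^{-d/\al}$ of Lemma \ref{tech1} (which rests on Conditions \ref{cond5}--\ref{cond6}), and the contraction identity \eqref{GTQ_r2}, which in the notation of \eqref{def_HT} reads $\|g_{t,q}\otimes_r g_{t,q}\|^2_{\fH^{\otimes 2q-2r}}=a_q^4\,h_t(q-r,r)$.

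\emph{Verifying (a).} For $t_2\ge t_1\ge X$ one has, by orthogonality of the chaoses, $\Cov(F_{t_1},F_{t_2})=(\sigma_{t_1,N_{t_1}}\sigma_{t_2,N_{t_2}})^{-1}\sum_{q=R}^{N_{t_1}}a_q^2\,q!\int_{t_1D\times t_2D}\cC^q(x-y)\,dx\,dy$, and I would split the sum at the fixed index $M_0:=\lceil d/\dl\rceil$. For $q>M_0$ one has $\cC\in L^q(\R^d)$, hence $\bigl|\int_{t_1D\times t_2D}\cC^q(x-y)\,dx\,dy\bigr|\le\Vol(t_1D)\,\|\cC\|_{L^q}^q$, and Lemma \ref{tech1} bounds the whole high-order contribution by $\les t_1^d$, i.e.\ by $\les(t_1/t_2)^{d/2}$ after dividing by $\sigma_{t_1,N_{t_1}}\sigma_{t_2,N_{t_2}}\ges t_1^{d/2}t_2^{d/2}$. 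For the finitely many terms $R\le q\le M_0$ I would use Cauchy--Schwarz in $\fH^{\otimes q}$, $\bigl|\int_{t_1D\times t_2D}\cC^q(x-y)\,dx\,dy\bigr|\le(\int_{(t_1D)^2}\cC^q)^{1/2}(\int_{(t_2D)^2}\cC^q)^{1/2}$, together with the asymptotics of $\int_{(tD)^2}\cC^q$ from Lemmas \ref{tech2}--\ref{tech3}: in case (c1), and on the part of (c2) where $\cC\in L^R$, those $q$ are again covered by $\cC\in L^q$; when the $R$-th (resp.\ $M$-th) chaos is dominant the required power comes from the index of regular variation of $r\mapsto w_{r,M}$ via Potter's inequality (Lemma \ref{lemP}(iv)), while the strictly subdominant terms carry a genuine power gap relative to $\sigma_t^2$. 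Taking $\be_1$ to be the smallest exponent obtained yields (a).

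\emph{Verifying (b).} Here I would feed $Y=F_{t_1}$ and $Y'=F_{t_2}$ into the Malliavin--Stein bound \eqref{MSbdd3}, so that $\sqrt{\Var(\langle DF_{t_1},-DL^{-1}F_{t_2}\rangle_{\fH})}\le\sqrt{N_{t_2}}\sum_{q=R}^{N_{t_2}}3^{2q}q!\,\M'_q+N_{t_1}\sum_{p=R}^{N_{t_1}}3^{2p}p!\,\M_p$, where by \eqref{GTQ_r2} and the normalization $\M_p=a_p^2\,\sigma_{t_1,N_{t_1}}^{-2}\max_{1\le r\le p-1}\sqrt{h_{t_1}(p-r,r)}$ (and symmetrically for $\M'_q$). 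The sum $\sum_p 3^{2p}p!\,\M_p$ I would split at a \emph{fixed} threshold $m_0$, large enough that $p>m_0$ forces $\lceil p/2\rceil>d/\dl$. For $p>m_0$ and $1\le r\le p-1$ the larger of $p-r,r$ exceeds $d/\dl$, so integrating out two of the four variables in $h_{t_1}(p-r,r)$ against that large power of $\cC$ and the other two against the smaller power gives $h_{t_1}(p-r,r)\les\|\cC\|_{L^{\lceil p/2\rceil}}^{2\lceil p/2\rceil}\,t_1^{2d-\dl'}$ with $\dl':=\min(\dl,d)>0$; Lemma \ref{tech1} then gives $\M_p\les a_p^2\,p^{-d/\al}\,t_1^{-\dl'/2}$, and as $\sup_p p!\,a_p^2\,p^{-d/\al}<\infty$,
\[
\sum_{m_0<p\le N_{t_1}}3^{2p}p!\,\M_p\ \les\ t_1^{-\dl'/2}\sum_{m_0<p\le N_{t_1}}9^{\,p}\ \les\ t_1^{-\dl'/2}\,9^{\,N_{t_1}}.
\]
Since $\theta<1$, $N_{t_1}\sim\log^{\theta}(t_1)=o(\log t_1)$, whence $9^{\,N_{t_1}}=o(t_1^{\eps})$ for every $\eps>0$ and the above is $\les t_1^{-\dl'/4}$, negligible against every negative power of $\log t_1$. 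For the finitely many $R\le p\le m_0$, whenever $p-r,r\ge 1$ with $(p-r)+r=p\ge R$ one has $\sqrt{h_{t_1}(p-r,r)}/\sigma_{t_1}^2\le\xi_p(t_1)\le\xi_R(t_1)$ by the monotonicity of $m\mapsto\xi_m$ in \eqref{XIM0}, so Condition \ref{cond8} (that is, \eqref{xiLeo}) and $\sigma_{t_1}^2\asymp\sigma_{t_1,N_{t_1}}^2$ (Lemma \ref{tech3}) give $\M_p\les a_p^2\log^{-\theta_0}(t_1)$ and hence $\sum_{R\le p\le m_0}3^{2p}p!\,\M_p\les\log^{-\theta_0}(t_1)$. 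Combining the two ranges, $N_{t_1}\sum_{p}3^{2p}p!\,\M_p\les\log^{\theta}(t_1)\bigl(\log^{-\theta_0}(t_1)+t_1^{-\dl'/4}\bigr)\les\log^{-(\theta_0-\theta)}(t_1)$ because $\theta<\theta_0$; the $\sqrt{N_{t_2}}$-sum is handled identically and is $\les\log^{-(\theta_0-\theta/2)}(t_2)$. Squaring, (b) holds with $\be_2,\be_3$ both equal to $\min\{2(\theta_0-\theta),\tfrac12\}\in(0,1)$, and Proposition \ref{prop210} then delivers the $\ASCLT$.

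\emph{Where the difficulty lies.} The delicate point is the estimate of $\sum_p 3^{2p}p!\,\M_p$ in (b): the Malliavin--Stein bound carries the factors $N_t\cdot 9^{N_t}$, which grow faster than any power of $\log t$, so the mere logarithmic decay of the contractions furnished by Condition \ref{cond8} cannot be summed chaos-by-chaos over all of $R,\dots,N_t$. The way around this is precisely the low/high split above: the \emph{finitely many} low-order contractions are absorbed by Condition \ref{cond8}, while the \emph{infinitely many} high-order ones are individually far smaller --- they enjoy a power-in-$t$ decay coming from Conditions \ref{cond5} and \ref{cond7} (this is the content behind Lemma \ref{tech4}) as well as an extra factor $p^{-d/\al}$ --- so that even after multiplication by $9^p$ and summation they cost only a power of $t$, which is harmless exactly because $N_t$ grows sub-polylogarithmically. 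This balance is what forces the admissible range \eqref{th0} for $\theta$; and the same computation of $\Var(\langle DF_t,-DL^{-1}F_t\rangle_{\fH})$, plugged into \eqref{MSbdd1} and combined with the $L^2$-bound \eqref{red1a}, is what yields the \textsf{QCLT} rate \eqref{QCLT_bdd1} of Theorem \ref{thm:main}-{\bf(1)}.
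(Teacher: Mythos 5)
Your overall architecture is the paper's: reduce to Proposition \ref{prop210} and verify its conditions (a) and (b), splitting the chaotic sum into a fixed low-order block controlled by Condition \ref{cond8} and a high-order tail with power decay in $t$ that beats the factor $N_t\,9^{N_t}$ because $\theta<1$. Your verification of (b) is essentially the paper's proof (the paper packages your high-order contraction estimate into Lemma \ref{tech4} and bounds the tail by $3^{2N_t}\xi_m(t)$, but the computation and the conclusion \eqref{th0b} are the same).

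There is, however, a genuine gap in your verification of condition (a), in the cases (c2) and (c3) of Condition \ref{cond7} where $\cC\notin L^R(\R^d)$ and the low-order chaoses are dominant. For the terms $R\le q\le M_0$ you invoke Cauchy--Schwarz in $\fH^{\otimes q}$, which gives
\[
\Big|\int_{t_1D\times t_2D}\cC^q(x-y)\,dx\,dy\Big|\le\Big(\int_{(t_1D)^2}\cC^q\Big)^{1/2}\Big(\int_{(t_2D)^2}\cC^q\Big)^{1/2}.
\]
For the dominant index (say $q=R$ in case (c2)) the right-hand side is $\asymp(t_1^d w_{t_1,R})^{1/2}(t_2^d w_{t_2,R})^{1/2}\asymp\s_{t_1}\s_{t_2}$, so after dividing by $\s_{t_1,N_{t_1}}\s_{t_2,N_{t_2}}$ you obtain only an $O(1)$ bound for $\Cov(F_{t_1},F_{t_2})$ --- the normalization cancels exactly, and there is no ratio $w_{t_2}/w_{t_1}$ left on which to apply Potter's inequality. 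Condition (a) demands genuine decay $(t_1/t_2)^{\be_1}$ with $\be_1>0$, so this step fails. What is needed is an \emph{asymmetric} bound that sees only the volume of the smaller domain: in case (c2) one uses $\cC^R\ge0$ and $|x-y|\le 2\mathfrak{m}t_2$ to get $\int_{t_1D\times t_2D}\cC^R(x-y)\,dx\,dy\les t_1^d\,w_{t_2,R}$, whence $|\Cov(F_{t_1},F_{t_2})|\les(t_1/t_2)^{d/2}(w_{t_2,R}/w_{t_1,R})^{1/2}\les(t_1/t_2)^{(d-\rho')/2}$ by Potter's bound with $\rho'<d$; in case (c3), where the kernel $K=\sum_{k=R}^Mk!a_k^2\cC^k$ need not be nonnegative, the analogous estimate $\int_{t_1D}\int_{t_2D}K(x-y)\,dx\,dy\les t_1^d\,w_{t_2,M}$ is precisely the content of Lemma \ref{tech3}, which you cite only for the diagonal variances $\int_{(tD)^2}$ rather than for the off-diagonal covariance $t_1\ne t_2$ where it is actually required.
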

 
  \begin{proof} In view of   Proposition \ref{prop210},
  we need to verify the conditions {\bf (a)-(b)} therein. 
   In what follows, we 
  will use the fact $\sum_{k\geq R}a_k^2 k! < \infty$
  for several times. 
  
\medskip
\noi
$\bul$  {\bf Verification of condition  (a).} Assume that $t_2\geq t_1$.
First of all, we deduce from the Hermite expansion 
\eqref{chaos_YTN} and the orthogonality relation
\eqref{HerP2} that 

\noi
\begin{align} \label{FT1}
\Cov \big( Y_{t_1,N_{t_1}}, Y_{t_2,N_{t_2}} \big)
= \sum_{k=R}^{N_{t_1}} k!a_k^2 \int_{t_1D} \int_{t_2 D} 
\cC^k(x-y) dxdy.
\end{align}

%

Next, we will consider three situations in Condition \ref{cond7} separately. 

\medskip

$\underline{\textbf{In case   (c1)}}$, we assumed that $\cC\in L^R(\R^d)$.
Together with $|\cC| \leq 1$, we deduce that 
$\| \cC\|_{L^k(\R^d)}^k \leq  \| \cC\|_{L^R(\R^d)}^R \les  1$
for any $k\geq R$, 
so that 
we have

\noi
\noi
\begin{align} \label{FT1a}
\big| \Cov \big( Y_{t_1,N_{t_1}}, Y_{t_2,N_{t_2}} \big) \big|
&\leq \sum_{k=R}^{N_{t_1}} k!a_k^2 \Vol(t_1 D)  \| \cC\|_{L^R(\R^d)}^R 
\les  t_1^d.
 \end{align}
By the same argument together with dominated convergence 
theorem, we can see that 
$\s^2_{t, N_t} \asymp t^d$; 
see also similar arguments later in  \eqref{routine1}-\eqref{routine2}.
Then, it follows from \eqref{FT1a}
that 
\begin{align*}   
\big| \Cov  ( F_{t_1} ,  F_{t_2} ) \big|
\les  (t_1/t_2)^{\frac{d}{2}}.
\end{align*}

\medskip

$\underline{\textbf{In case   (c2)}}$,
 we assumed that $\cC^R \geq 0$
and 
\begin{align}\label{a_rv1}
r\in\R_+\mapsto w_{r, R} = a_R^2 R! \int_{|z|\leq r} \cC^R(z)dz
\,\,\,\, \text{is regularly varying with index $\rho\in[0, d)$. }
\end{align}

\noi
First, we deduce from \eqref{FT1} and $\cC^R \geq 0$
with $|\cC| \leq1$ and $\sum_{k\geq R} k! a_k^2 <\infty$
that for $t_2\ge t_1$

\noi
\begin{align}\label{a_rv2}
\begin{aligned}
\big| \Cov \big( Y_{t_1,N_{t_1}}, Y_{t_2,N_{t_2}} \big) \big|
&\leq  \sum_{k=R}^{N_{t_1}} k!a_k^2 \int_{t_1D} \int_{t_2 D} 
\cC^R(x-y) dxdy \\
& \les t_1^d   a_R^2 R!  \int_{\lbrace |z|\le 2\mathfrak{m}t_2\rbrace} 
\cC^R(z) dz = t_1^d   w_{2\mathfrak{m}t_2, R} \\
&\les  t_1^d   w_{t_2, R}, 
\end{aligned}
\end{align}

\noi
where in the last step, we used the assumption \eqref{a_rv1}
and the fact that
 $|z|=|x-y|\le |x|+|y|\le 2 \mathfrak{m} t_2$
  for $x\in t_1D_1$ and $y\in t_2D_2$, where $\mathfrak{m} := \sup\{|x|  :  x\in D\}<\infty$.
Therefore, it follows from \eqref{a_rv2} and 
Lemma \ref{tech2} (in particular \eqref{expand2})
that  
 \[
\big| \Cov  ( F_{t_1} ,  F_{t_2} ) \big|
\les \frac{ t_1^d   w_{t_2, R} }{\sqrt{ t_1^d w_{t_1, R}  t_2^d   w_{t_2, R}} }
=  (t_1/t_2)^{\frac{d}{2}}  \Big( \frac{w_{t_2, R}}{w_{t_1, R}} \Big)^{\frac{1}{2}}.
\]
 By Potter's bound  in 
 Lemma \ref{lemP}, one has 
 \[
  \frac{w_{t_2, R}}{w_{t_1, R}} \les (t_2/t_1)^{\rho'}
 \]
 with $\rho' \in (\rho, d)$ for $t_2 \geq t_1 \geq X$
 for some sufficiently large $X > 0$.
  Thus, we have 
  
  \noi
\begin{align*}  
\big| \Cov  ( F_{t_1} ,  F_{t_2} ) \big|
\les   (t_1/t_2)^{\frac{d-\rho'}2}. 
\end{align*}

 \medskip
 
$\underline{\textbf{In case   (c3)}}$,
 we assumed that $D$ is a centered closed ball and there
 is some integer $M > \frac{d}{\dl} -1$ (with $\dl$ as in Condition \ref{cond5})
 such that $w_{r, M} \to w_{\infty, M}\in (0, \infty]$ as $r\to+\infty$.
 Then, we can apply Lemma \ref{tech3}
 with 
 \begin{align}  \label{a_rv3a}
 K(z) = \sum_{k=R}^M  k!a_k^2   \cC^k(z)
 \quad
 {\rm and}
 \quad
 w_t = w_{t,M} = \int_{\{ |z| \leq t \} }  K(z)dz
 \end{align}
 to get 
 \begin{align} \label{a_rv3b}
 \int_{t_1D} \int_{t_2 D} 
K(x-y) dxdy  \les t_1^d \,w_{t_2, M} ,
 \end{align}
 
 \noi
 where $t_2\geq t_1 \geq X$ for some sufficiently large $X > 0$.
 Meanwhile, Condition \ref{cond5} implies that 
 $\cC\in L^{M+1}(\R^d)$ so that 

\noi
 \begin{align} \label{a_rv3c}
 \sum_{k=M+1}^{N_{t_1}} k!a_k^2 \int_{t_1D} \int_{t_2 D} 
\cC^k(x-y) dxdy 
\les t_1^d.
 \end{align}
 Thus, combining \eqref{a_rv3a},  \eqref{a_rv3b}, \eqref{a_rv3c},
 and \eqref{FT1}, we get 
 
 \noi
\begin{align*}  
   \big| \Cov ( Y_{t_1,N_{t_1}}, Y_{t_2,N_{t_2}} )\big|
\les t_1^d\,w_{t_2,M}\, ,
\quad  t_2\ge t_1\ge X.
\end{align*}
Similarly as in the case (c2), 
we can use $\s^2_{t, N_t} \asymp t^d w_{t, M}$ 
(see Lemma \ref{tech2}) and Potter's bound, 
so condition {\bf (a)} of Proposition \ref{prop210}
is also verified  in this case. 

\medskip

Therefore, we have verified the condition {\bf (a)} in all the three cases (c1)-(c2)-(c3).

\medskip
\noi
$\bul$  {\bf Verification of condition  (b).}
Note that 
\[
F_t = \sum_{k=R}^{N_t} 
 I_k \bigg( \frac{a_k}{\s_{t, N_t} } \int_{tD} e_x^{\otimes k} dx \bigg)
 = :  \sum_{k=R}^{N_t} 
 I_k ( \widehat{g}_{t, k} ).
\]
Then, as in \eqref{GTQ} and \eqref{GTQ_r},
we get for $r\in\{1, ... , k-1\}$:

\noi
\begin{align} \notag 
\begin{aligned}
&\big\|\widehat{g}_{t, k}  \otimes_r \widehat{g}_{t, k} 
 \big\|_{\fH^{\otimes (2k-2r)}}
= \frac{a_k^2}{\s_{t, N_t}^2} \Big\| \int_{(tD)^2} 
\cC^r(x-y) e_x^{\otimes k-r} \otimes  e_y^{\otimes k-r} dxdy 
      \Big\|_{\fH^{\otimes (2k-2r)}} \\
&= \frac{a_k^2}{\s_{t, N_t}^2}
\bigg(   \int_{(tD)^4} \cC^r(x-y)   \cC^r(z-w)  \cC^{k-r}(x-z)  \cC^{k-r}(y-w) dxdydzdw \bigg)^{1/2} \\
&=  \frac{a_k^2}{\s_{t, N_t}^2} \sqrt{h_t(r, k-r)}
\quad\text{with $h_t$  as in \eqref{def_HT}.}
\end{aligned}
\end{align}

\noi
Now
we  apply the Malliavin-Stein bound \eqref{MSbdd3}
with $Y = F_{t_i}$ and $Y'  = F_{t_j}$, $i\in\{1,2\}$:

\noi
\begin{align}  \label{deda1}
\begin{aligned}  
    \sqrt{\Var\big(  \langle DF_{t_i},-DL^{-1}F_{t_j} \rangle_{\fH}   \big)}&\le \sqrt{N_{t_i}}\sum_{q=1}^{N_{t_i}} 
    3^{2q} q!\M_{q,t_i} 	 + N_{t_j}\sum_{p=1}^{N_{t_j}}  
    3^{2p} p! \M_{p,t_j}\,,
  \end{aligned}
  \end{align}
where 
\[
\M_{p, t} =  \frac{a_p^2}{\s_{t, N_t}^2} \sup_{1\leq r \leq p-1}\sqrt{h_t(r, p-r)}.
\]

Note that with $\s_{t, N_t}\asymp \s_t$ (see Lemma \ref{tech2})
and $\sum_{k\geq R} a_k^2 k! <\infty$,
we have

\noi
\begin{align}\label{RMB}
\begin{aligned}
 \sum_{p=1}^{N_{t}}  3^{2p} p!    \M_{p, t}
 &\leq  \sum_{p=1}^{N_{t}}  3^{2p} p! a_p^2    
  \frac{1}{\s_{t, N_t}^2} \sup_{1\leq r \leq p-1}\sqrt{h_t(r, p-r)} \\
  &\les 
   \sum_{p=1}^{m}  3^{2p} p! a_p^2    \xi_R(t)
   +  3^{2N_t}   \bigg( \sum_{p=m+1}^{N_t}  p! a_p^2  \bigg)    \xi_m(t)
 \end{aligned}
 \end{align}
 
 \noi
with $m > d/\dl$, where $\xi_m(t)$ is defined as in \eqref{XIM0}.
By Condition \ref{cond8}, $  \xi_R(t) \les \log^{-\theta_0}(t)$
and by Lemma \ref{tech4}, $\xi_m(t) \les t^{-a}$ for some $a > 0$.
Thus,  with $N_t \sim \log^\theta(t)$ and 
$\theta$ as in \eqref{th0} (i.e., $0 < \theta < \min\{1,\theta_0\}$)

\noi
\begin{align}\label{MPT_bdd}
 \sum_{p=1}^{N_{t}}  3^{2p} p!    \M_{p, t}
 \les   \log^{-\theta_0}(t) +  9^{\log^{\theta}(t)} t^{-a}
 \les    \log^{-\theta_0}(t). 
\end{align}
Therefore, the condition {\bf (b)} can be verified 
by invoking 
  \eqref{deda1} and \eqref{MPT_bdd} with 
\eqref{th0}:
\begin{align}
	\sqrt{\Var\big(  \langle DF_{t_i},-DL^{-1}F_{t_j} \rangle_{\fH}   \big)} \les \log^{\frac\theta 2-\theta_0}(t_1)+\log^{\theta-\theta_0}(t_2) ,
	 \label{th0b}
\end{align}

\noi
where, due to \eqref{th0}, $\frac\theta 2-\theta_0\leq  \theta - \theta_0 < 0$.

\medskip

Hence, the condition {\bf (b)} is verified and thus by Proposition
\ref{prop210}, $\{F_t = \frac{1}{\s_{t, N_t}} Y_{t, N_t}: t\geq 1\}$
satisfies the $\ASCLT$. 
\qedhere
  \end{proof}

\begin{proof}[Proof of Theorem \ref{thm:main}]
The proof  of part  {\bf (2) [$\ASCLT$]} can be easily done
by combining  Lemma \ref{Red_step} (with $N = N_t \sim \log^\theta(t)$),
 Proposition \ref{prop_FT}, and Lemma \ref{lem_red},
 where $\theta$ is given as in \eqref{th0}.
 
 Next, we prove part  {\bf (1) [\textsf{QCLT}]}. 
 Using the definition of Wasserstein distance, we have 
 \[
W_2\big( Y_t / \s_t, \NN(0,1)  \big)
 \leq W_2\big( Y_t/\s_t,  F_t \big)
 + W_2\big(  F_t,  \NN(0,1) \big).
 \]
 The second term can be bounded by using
 Proposition \ref{MS_bdd} and \eqref{th0b}:
 \[
W_2  \big(  F_t,  \NN(0,1) \big)
   \les \log^{\theta- \theta_0  }(t);
 \]
 the first term can be bounded by the $L^2(\O)$-norm of $F_t -  \tfrac{Y_t}{\s_t}$,
 so that it follows from Lemma \ref{Red_step} that 
 \begin{align}\label{SIMIa}
W_2\big( Y_t/\s_t,  F_t \big)\leq
  \sqrt{ \E\big[   | \tfrac{1}{\s_t} Y_t -  \tfrac{1}{\s_{t, N}}  Y_{t, N_t}  |^2 \big]  }
  \les \log^{-\frac{d\theta}{2\al}}(t).
 \end{align}
Thus,  the bound \eqref{QCLT_bdd1} holds true. 
Hence, the proof of Theorem \ref{thm:main} is completed.
 \qedhere
    
\end{proof}

\begin{remark} \rm
By Proposition \ref{MS_bdd}, we have 
\[
d_{\rm TV}(\tfrac{1}{\s_{t, N}}  Y_{t, N_t} , \NN(0,1)  )
\les \log^{\theta - \theta_0  }(t).
\]
However, a  bound similar to \eqref{SIMIa}
does not hold for the total variation distance.
This is why we describe our QCLT in Theorem \ref{thm:main}
by using the Wasserstein distance $W_2$.
We could not work with $W_p$ for $p>2$,
since under our general mild assumptions, 
we can only bound the second moment (not higher moments)
of the difference $\tfrac{1}{\s_t} Y_t -  \tfrac{1}{\s_{t, N}}  Y_{t, N_t}  $.

\end{remark}

\subsection{Proofs of Corollaries} \label{SEC3_2}

  Let us begin with the proof of the Malliavin differentiability of integrals functionals 
  of Berry's random wave model.

 \begin{proof}[Proof of Corollary \ref{corMalliavinBerry}]
The covariance function 
$\cC(x)=b_d(|x|)$ (as defined in \eqref{bd})  
of Berry's random wave model satisfies Condition \ref{cond5} 
with $\delta=(d-1)/2$, thanks to \eqref{Jdinf}.  
It is also known that

\noi
\begin{align}\label{Jd0}
     J_{\frac{d}{2}-1}(r) =   \frac{r^{\frac{d}{2}-1}}{2^{\frac{d}{2}-1}\Gamma\left (\frac{d}{2} \right )} - \frac{r^{\frac{d}{2}+1}}{2^{\frac{d}{2}+1}\Gamma\left (\frac{d}{2}+1 \right )} + O\left (r^{\frac{d}{2}+3}\right ),\qquad  r\to 0^+,
 \end{align}

 \noi
see, e.g., \cite[(1.71.1)]{szego39},  
which yields 
\[
b_d(|x|) = 1 - \frac{|x|^2}{2d} + O(|x|^4)
\quad\text{as $|x|\to 0^+.$}
\]
That is, Condition \ref{cond6} with $\alpha=2$
is satisfied by $\cC(x)=b_d(|x|)$. 
Moreover, the condition \eqref{minore} in Lemma \ref{tech1} also holds
true. Therefore, Lemma    \ref{tech1}, together with
Remark \ref{remModulo}, implies  \eqref{stima_momentiBessel}
and
\[
\int_{\R^d} | \cC^q(z)|dz \asymp q^{-d/2},
\]
which further implies  the desired
Malliavin differentiability in view of \eqref{stima_derivata}.
Hence, the proof is concluded. \qedhere

\end{proof}

 Next, we present  the proof of the $\ASCLT$ 
 in the Breuer-Major setting (i.e.,  Corollary \ref{cor:ASBM}). 
As we anticipated in Section \ref{S1_3},
Corollary \ref{cor:ASBM} can be proved as 
a direct application of Theorem \ref{thm:main}. 
Assuming Condition \ref{cond5} with $\delta>d/R$ 
and $\varphi-\E[\varphi(Z)]$ non-odd would be enough 
to prove a CLT, since one can apply 
Breuer-Major theorem (Theorem \ref{recap}-(i));
for the $\ASCLT$ to hold,   
we need to additionally assume Condition \ref{cond6}.

\begin{proof}[Proof of Corollary \ref{cor:ASBM}]
   First of all, we note that Condition \ref{cond5} (with $\delta > d/R$) and Condition \ref{cond6} hold by assumption.
  Secondly,     Condition \ref{cond7} is 
  also  verified, since $\cC\in L^R(\R^d)$ 
  and $\varphi-\E[\varphi(Z)]$, with $Z\sim\NN(0,1)$, is not odd.
     Moreover, Condition \ref{cond8} is satisfied as well.
     In fact,   Lemma \ref{tech4} ensures that for $m > d/\delta$
     \begin{equation}\label{lesss}
     \xi_m(t) \les t^{-a},
     \end{equation}
     for some $a>0$. Since $d/\delta < R$, we can choose $m=R$ in \eqref{lesss}, 
     thus \eqref{xiLeo} holds true.
     Hence, the $\ASCLT$ holds as a consequence of 
       Theorem \ref{thm:main}-{\bf(2)}.
\end{proof}

We conclude this section by proving Corollary \ref{cor:berry},
i.e., the $\ASCLT$ for integral functionals of Berry's random wave model.

\begin{proof}[Proof of Corollary \ref{cor:berry}]
In view of \eqref{lab1} and  Lemma \ref{lem_red},
we see that the first chaotic component is asymptotic negligible
and the $\ASCLT$ for $(Y_t - \E[Y_t])/\sqrt{\Var(Y_t)}$ is equivalent
to that for $(Y_t - \E[Y_t] - a_1 \int_{tD} B_xdx)/\sqrt{\Var(Y_t)}$.
From now on, we assume $R\geq 2$.

    Recall from \eqref{bd}, \eqref{Jdinf},  and \eqref{Jd0} that
       the covariance kernel of Berry's random wave model $\cC(x)= b_d(|x|)$ 
    satisfies Condition \ref{cond5} with $\delta=(d-1)/2$ and Condition \ref{cond6} 
    with $\alpha =2$; see also the proof of   Corollary \ref{corMalliavinBerry}.

Now let us verify the Condition \ref{cond7}.
Excluding the cases in \eqref{excases},
we {\it claim} that
the function  

\noi
\begin{align} \label{claim_wrm}
\begin{aligned} 
t\in\R_+\mapsto & \,\, w_{t, M}  
= \sum_{q= R}^M a_q^2 q! \int_{\{ |x| \leq t \}} \cC^q(x)dx  \\
&\text{is regularly varying at infinity
with $w_{\infty, M} > 0$}
\end{aligned}
\end{align}

\noi
for $M\geq \max\{5, R\}$.\footnote{The number $5$ is picked for the consideration 
of the fourth case in \eqref{varBerry}.}
Since   
\begin{align} \label{5M}
\text{$M\ge 5$ and  $\frac{d}{\dl} -1=\frac{2d}{d-1} -1 \leq 3$} 
\end{align}
for $d\ge2$ and  $D$ is assumed to be a centered ball,
the condition (c3) in Condition \ref{cond7} holds true.
For the sake of completeness, we sketch the proof of  the  claim \eqref{claim_wrm}
as follows.  The proof is almost identical to that in \cite[Lemma 4.4]{Mai23},
although only the case $d=2$ is treated therein.
\begin{itemize}

\item Note that for any $d\geq 2$ and $q\geq 3$ (excluding $(d, q) = (2, 4)$), 
\begin{align*}
 \int_{\{ |x| \leq t \}} \cC^q(x)dx =    c_d \int_0^t b^q_d(r) r^{d-1}dr 
\end{align*}

\noi
{\it has a positive and finite limit} as $t\to+\infty$, where the immaterial 
$c_d$ in this proof is a constant that may vary from 
line to line; see (1.2), (2.3), and Lemma 2.6 in \cite{GMT24}.
It remains to consider the case $q=2$ and the case $(d, q) = (2, 4)$.

\item 
 For $q=2$, we use    the bound \eqref{Jdinf} with \eqref{bd} and
 $2\cos^2(x) = 1 + \cos(2x)$
  to write 
  
  \noi
\begin{align} \label{V2T}
\begin{aligned}
 v_{2, t}:= \int_{\{ |x| \leq t \}} \cC^2(x)dx 
 &=   c_d \int_0^t b^2_d(r) r^{d-1}dr  = c_d \int_0^t J_{\frac{d}2-1}^2(r) r dr \\
 &=  c_d \int_0^t  \cos^2(r - \tfrac{d-1}{4}\pi) dr + O\bigg(1 +  \int_1^t  r^{-2}   dr  \bigg) \\ 
 &= c_d \, t + \int_0^t \cos(2r - \tfrac{d-1}{2}\pi)dr + O(1),
\end{aligned}
\end{align}

\noi
which implies $ v_{2, t}\sim c_d\, t$ for some  constant $c_d\in(0, \infty)$.

\item For $(d,q)=(2, 4)$, we write 

\noi
\begin{align} \label{V4T}
 v_{4, t}:= \int_{\{ |x| \leq t \}} \cC^4(x)dx 
 &=   c_d \int_0^t J^4_0(r) r dr  \sim c'_d \log t
\end{align}
for some constant $c'_d \in(0,\infty)$, which follows by the exactly same arguments
as in the proof of \cite[Lemma 4.4]{Mai23}.

\end{itemize}

\noi
Therefore, combining the above cases with $M\geq \max\{5, R\}$, 
we can establish the claim \eqref{claim_wrm}.

\medskip

It remains to verify  the Condition \ref{cond8}.  
Recall the definitions of $\xi_R(t)$, $h_t$ from 
\eqref{def_HT} and \eqref{XIM0}.
Letting $\s_t^2$ denote the variance of $Y_t$
as in \eqref{XIM0} and let 
$\s_t^2[q]$ denote the variance of the $q$-th chaotic
component of $Y_t$, we have

\noi
\begin{align}
\begin{aligned}
   \xi_R(t) &\leq \ind_{\{a_2\neq0\}}\,   \frac{\sqrt{h_t(1,1)}}{\s^2_t[2]}  \times \frac{\s^2_t[2]}{\s^2_t}
   +  \ind_{\{a_3\neq0\}}\,\frac{\sqrt{h_t(1,2)}}{\s^2_t[3]}\times  \frac{\s^2_t[3]}{\s^2_t}
   \\
   &\qquad + \ind_{\{a_4\neq0\}}\,\frac{\sqrt{h_t(1,3)}
   +\sqrt{h_t(2,2)}}{\s^2_t[4]}  \times \frac{\s^2_t[4]}{\s^2_t}
   + \xi_5(t) =: \sum_{k=2}^5 {\pmb{\g}_k}.
\end{aligned}\label{addends}
\end{align}

\noi
Since $5 > d/\dl =4$ for $d= 2$ by \eqref{5M}, Lemma \ref{tech4}
implies that ${\pmb{\g}_5}  =\xi_5(t) \les t^{-a}$ for some $a > 0$.
Moreover, in view of \eqref{lab1}, \eqref{varBerry},
and
\eqref{equ:important inequality}, we have the following observations for $d=2$:
\begin{itemize}

\item[(a)] 
when $a_2 \neq 0$, we have ${\pmb{\g}_3} +{\pmb{\g}_4}  \les \frac{\log t}{t}$;

\item[(b)]
when $a_2=0$, and $a_3 = a_4 =0$, we have $\xi_R(t) \leq \xi_5(t)\les t^{-a}$;

\item[(c)] when $a_2=0$,  the situation  $a_3 \neq 0 = a_4$ is excluded  as in \eqref{excases}; 

 \item[(d)] when $a_2=0$ and $a_4\neq 0$, we have ${\pmb{\g}_3}   \les \frac{1}{\log t}$.
\end{itemize} 
That is, when $d=2$, we only need to prove 

\noi
\begin{align}\label{case_d=2}
\frac{\sqrt{h_t(1,1)}}{\s^2_t[2]}  + \frac{\sqrt{h_t(1,3)} + \sqrt{h_t(2,2)} }{\s^2_t[4]}  \les \log^{-\theta}(t)
\end{align}

\noi
 for some $\theta > 0$.  When $d\geq 4$, we have $d/\dl = \frac{2d}{d-1} < 3$, then 
 Lemma \ref{tech4}
implies that $\xi_3(t) \les t^{-a}$ for some $a > 0$. That is, 
when $d\geq 4$, we only need to prove 

\noi
\begin{align}\label{case_d34}
\frac{\sqrt{h_t(1,1)}}{\s^2_t[2]}    \les \log^{-\theta}(t)
\end{align}

\noi
 for some $\theta > 0$. When $d=3$, we have  $\frac{d}{\dl} = \frac{2d}{d-1}=3$,
 then Lemma \ref{tech4}
implies that $\xi_4(t) \les t^{-a}$ for some $a > 0$.
Similarly, we have the following observations:

\noi
 \begin{itemize}

\item[(a')] 
when $a_2 \neq 0$, we have ${\pmb{\g}_3}   \les \frac{1}{t}$;

\item[(b')]
when $a_2=0$ and $a_3=0$, we have $\xi_R(t) \leq  \xi_4(t) \les t^{-a}$;

\item[(c')] when $a_2=0$, the situation $a_3\neq 0$ (i.e., $R=3$) is excluded  as in \eqref{excases}.
 
\end{itemize} 
That is, when $d=3$, we only need to prove \eqref{case_d34}. 
Therefore, from the above discussion, 
we only need to show

\noi
\begin{align}\label{Over1}
\begin{aligned}
\frac{\sqrt{h_t(1,1)}}{\s^2_t[2]}    &\les \log^{-\theta}(t) \quad\text{for $d\geq 2$}\\
\frac{  \sqrt{   h_t(1,3)} +  \sqrt{   h_t(2,2)} }{\s^2_t[4]}    &\les \log^{-\theta}(t) \quad\text{for $d = 2$.}\\
 \end{aligned}
  \end{align}

\noi
To prove the first bound in   \eqref{Over1},
we  can reason as in {\it Step 3} in the proof of \cite[Proposition 4.1]{MN23} to show

\noi
   \begin{align}
       \frac{h_t(1,1)}{\s^4_t[2]}
       \les t^{-1/2}\,.
       \label{boundcontractionberry}
   \end{align}
Indeed, 
 {\it Step 3} in the proof of \cite[Proposition 4.1]{MN23} indicates
 that  $h_t(1,1)/\s^4_t[2] \les (v_{2,t})^{-1/2}$ with $v_{2, t}$ as in \eqref{V2T},
 and the bound \eqref{boundcontractionberry} follows from $v_{2,t}\asymp t$.\footnote{Note that 
 $v_{2,t}$ coincides with the notation $w_{R,t}$ in \cite[(46)]{MN23}
 with $R=2$ therein.}

Next, we prove the second bound in  \eqref{Over1}.
For this purpose, we can apply \cite[Proposition 3.3]{MN23}
with $(d,R) =(2,4)$ to get
\begin{align}\label{Over2}
\pmb{\gamma}_4^2 \les \bigg(  \int_{\{ |x| \leq t\}}  \cC^4(x)dx  \bigg)^{-1} \les \frac{1}{\log t},
\end{align}
where the last bound is due to \eqref{V4T}.\footnote{Indeed, 
the normalized  contractions $h_t(1,3)/\s^4_t[4]$ and $h_t(2,2)/\s^4_t[4]$
can be bound by expressions in \cite[(35)]{MN23} with $q=4$ therein, 
and the condition (36) in  \cite[Proposition 3.3]{MN23} can be easily verified 
for $R=4, d=2$ and $C(x) = J_0(|x|)$ by using the bound \eqref{Jdinf}.
Then, the above bound \eqref{Over2}, in view of the last two displays in the proof of  \cite[Proposition 3.3]{MN23},
follows immediately. }
Thus,  we just verified the Condition \ref{cond8}.

\medskip

Hence, the proof  is completed by invoking Theorem \ref{thm:main}.
\qedhere

\end{proof}

\section{Technical results}
\label{S5}
 
 Let us first prove Lemma \ref{tech1}, a key result for our analysis.

\begin{proof}[Proof of Lemma \ref{tech1}]
To estimate the integral $\int_{\R^d}|\cC(z)|^N dz$, 
we will first break it into two parts. 
The first part concerns the integration over $\{ |x| > K\}$ using
the first bound in \eqref{cond56}.
To bound the integral over $\{ |x| \leq K\}$, we will relate it 
with the integral over $\{ |x| \leq \eps\}$ for small enough $\eps$
so that we can use the second bound in \eqref{cond56}.
The comparison between the integral over   these two balls 
is made possible via a doubling inequality for nonnegative nonnegative-definite
functions from  \cite{GT19}:
  for any  convex compact subsets 
 $U, V\subset \R^d$ that are symmetric about zero,  
 there is some  constant 
 $C = C(d, U, V) \in(0,\infty)$
 such that 
 \[
 \int_U \gamma(x)dx \leq  C(d, U, V)  \int_V \gamma(x)dx
 \]
 for any   nonnegative and nonnegative-definite function $\gamma: \R^d\to\R_+$.

Taking $\eps < \min\{ \eps_0 ,  C_2^{-\frac1\al} , 1\}$,
we can have     
\begin{align}    \label{pos1}
0< \cC(y) \leq 1- C_2  |y|^\al
\end{align}
for $|y| \leq \eps$. 
Now we can deduce from \eqref{pos1}
that 

\noi
 \begin{align} 
  \int_{\{|x|\leq \eps\}}\cC^N(x)\,dx 
&\leq \int_{\{|x|\leq \eps\}}   \big( 1- C_2  |x|^\al \big)^N   dx   \label{56a2}\\
&= \frac{2\pi^{\frac{d}{2}}}{\Gamma(d/2 )} \int_{0}^\eps   r^{d-1}  \big( 1- C_2  r^\al \big)^N   dr \notag \\
&=  \frac{2\pi^{\frac{d}{2}}}{\Gamma(d/2 )} \frac{1}{\alpha\,C_2^{d/\al}}\int_{0}^{C_2 \eps^\alpha}  y^{\frac{d}\al -1} ( 1- y )^N   dy \label{56a3} \\
&\le \frac{2\pi^{\frac{d}{2}}}{\Gamma(d/2 )}\frac{1}{\alpha\,C_2^{d/\al}}  \int_{0}^1  y^{\frac{d}\al -1} ( 1- y )^N   d y 
\les (N+1)^{-\frac{d}{\al}}, \label{56a4}
\end{align}
 
 \noi
where we made the change of variable $y = C_2  r^\al \in(0,1)$
in \eqref{56a3}
and used the Stirling's approximation for the integral in \eqref{56a4}
(note that this integral gives us $\textup{Beta}(\tfrac{d}{\al}, N+1)$).
 
Now we consider the integration over $\{ |x| > K \}$
with  $K > \max\{1,  (2C_1)^{1/\dl} \}$.
 We can get with $N \geq M := \lfloor \frac{d}{\dl}\rfloor +1$ that

\noi
\begin{align}\label{56a5}
 \int_{\{|x| > K\}}|\cC^N(x)| dx
& \leq    \int_{\{|x| > K\}} |\cC^M(x)| \big( C_1 |x|^{-\dl} \big)^{N-M} dx  \notag \\
& \leq \|  \cC \|^M_{L^M(\R^d)}  (C_1 K^{-\dl} )^{N-M} \notag \\
 &\leq \|  \cC \|^M_{L^M(\R^d)} 2^{M-N} \les 2^{-N}.
\end{align}

Finally,  letting
 \[
 N' = N\ind_{\{\frac{N}{2}\in\N\}} + (N-1)\ind_{\{\frac{N}{2}\notin\N\}}
 \]
 so that $\cC^{N'}$ is a nonnegative  nonnegative-definite function,\footnote{Let $\mu$ be the associated symmetric spectral (probability) measure, then for any even integer $m\geq 2$, 
 $\cC^{m}(x)  
 = \int_{\R^{dm}} e^{-i x (\xi_1+ ... + \xi_m)} 
 \mu(d\xi_1) ... \mu(d\xi_m )$,
 from which we deduce that 
 $\sum_{i=1}^n \cC^m(x_j)\cC^m(x_i) \lambda_j  \overline{\lambda_i} \geq 0$
 for any finite number of complex numbers $\lambda_1, ... , \lambda_n$
 and for any $x_1, ... , x_n\in\R^d$, $n\in\N$.
 That is, $\cC^m$ is positive-definite for any even $m\geq 2$.  } 
 we apply the said doubling inequality for $V = \{ | x| \leq \eps \}$
 and $U  = \{ | x| \leq K \}$ with $K$ as in \eqref{56a5}
 and $\eps$ as in \eqref{56a2}
 to get 
 \begin{align}\label{add1}
 \int_{|x| \leq K} \cC^{N'}(x) dx 
 \leq C_0
  \int_{|x| \leq \eps} \cC^{N'}(x) dx  ,
 \end{align}
 where the constant  $C_0 > 1$ does not depend on $N$. 
 It follows from \eqref{add1} and \eqref{56a4} that 
 
 \noi
 \begin{align} \notag 
 \begin{aligned}
  \int_{\R^d}   | \cC^{N}(x) | dx 
 &=   \int_{\{ | x| > K \}}   | \cC^{N}(x) | dx  + \int_{\{ | x| \leq K \}}   | \cC^{N}(x) | dx\\
 &\les 2^{-N} +     \int_{\{ | x| \leq K \}}   \cC^{N'}(x)  dx 
 \leq 2^{-N} +   C_0   \int_{\{ | x| \leq \eps \}}   \cC^{N'}(x)  dx \\
 &\les 2^{-N} + (N'+1)^{-d/\al} \les N^{-d/\al}.
\end{aligned}
\end{align}
 
 \noi 
 In order to conclude the proof of Lemma \ref{tech1}, 
 bearing in mind \eqref{minore}, it suffices to split 
 $\int_{\R^d} |C^N(x)|dx$ as above, define 
 \begin{center} 
 $N''=N$ if $N$ is even
 and 
  $N''= N+1$ if $N$ is odd,
 \end{center}
 and note that the contribution of the integral over 
 $\{ |x| >K\}$ is negligible 
 (indeed, it can be bounded as in \eqref{56a5}), 
 while the contribution of the integral over $\{ |x| \le K\}$ 
 is $\ges N^{-\frac{d}{\alpha}}$, as follows. 
 Let $\eps < \min\{ \eps_0 ,  C_3^{-\frac1\al} , 1\}$ with $C_3$ as in \eqref{minore}.
Note first that 
 $$
 \int_{\{ |x| \le K\} } |C^N(x)| dx \ge \int_{\lbrace |x| \le K\rbrace} C^{N''}(x) dx 
 \ge \int_{\lbrace |x| \le \eps \rbrace} C^{N''}(x) dx ,
 $$

\noi
where the last inequality is due to $K \geq \eps$.
  Then we can write for $N > 1/(C_3 \eps^\alpha)$:

 \noi
 \begin{align*}
 \int_{\lbrace |x| \le \eps \rbrace} C^{N''}(x) dx 
 &\ges
  \int_{\{ |x|\leq \eps\}}   \big( 1- C_3  |x|^\al \big)^N   dx \\
  &=  \frac{2\pi^{\frac{d}{2}}}{\Gamma (d/2 )} \frac{1}{\alpha\,C_3^{d/\al}}\int_{0}^{C_3 \eps^\al}  
 y^{\frac{d}\al -1} ( 1- y )^N   dy  \\
&\ges  \int_{0}^{\frac{1}{N}}  y^{\frac{d}\al -1} ( 1- y )^N   dy  \\
&\geq  \Big ( 1- \frac{1}{N} \Big)^N\int_{0}^{\frac{1}{N}}  y^{\frac{d}\al -1}    dy    \\
&=   ( 1-  N^{-1})^N  \frac{\al}{d} N^{-\frac{d}\al}  \asymp N^{-\frac{d}{\al}},
 \end{align*}

\noi
using $\lim_{N\to+\infty} ( 1-  N^{-1})^N=e^{-1}$ for the last step.
Hence the proof is concluded.
 \qedhere
 
\end{proof}

 \begin{remark}\rm \label{remModulo}

A careful investigation of the proof of Lemma \ref{tech1} reveals that 
\eqref{stima_momenti_gen} holds also without taking the absolute value of $\cC^N$, that is, 
\begin{equation} \label{equsenzava}
\int_{\R^d} {\cC}^N(z)dz \asymp N^{-d/\al}
\end{equation}

\noi
under the assumptions \eqref{cond56}-\eqref{minore} of Lemma \ref{tech1}. 
The upper bound in \eqref{equsenzava} is an obvious consequence of Lemma \ref{tech1}.
Now  let us  prove
the lower bound in \eqref{equsenzava}.
First of all, 
note that condition \eqref{cond56} implies\footnote{Indeed, 
if $\cC(x)= \cC(0)=1$ for some $x\neq0$, then  we have 
$\E[ (B_x - B_0)^2] = 0$ implying that $B_x = B_0$ almost surely.
By stationarity of $\bfB$, we have $B_{nx}= B_{(n-1)x} = ... =B_0$ almost surely
thus $\cC(nx) =1$ 
for any   integer $n\geq 1$, which     contradicts \eqref{cond56}.}
\begin{equation}
\label{additionalcondition}
\cC(x)=\cC(0)=1\,\iff\,\,x=0.
\end{equation} 
Moreover, if \eqref{minore} holds, then $\cC$ is positive on $\{ |x|\le \eps\}$ 
 for $\eps > 0$ small enough,
and thus

\noi
\begin{align} \label{3terms}
    \int_{\R^d} \cC^N(z)dz&=\int_{\{ |x|\le \eps\}}|\cC^N(x)|\,dx
    +\int_{\{ \eps < |x| \le K\}}\cC^N(x)\,dx
    +\int_{\{|x|>K\}}\cC^N(x)\,dx.
\end{align}

\noi
By conditions \eqref{cond56}-\eqref{minore}, reasoning as in the proof of Lemma \ref{tech1}, 
we have that the first summand
in \eqref{3terms} is of order   $N^{-d/\alpha}$ and dominates the third summand, i.e., 

\noi
\begin{align*}
    \Big|   \int_{\{ |x|>K\}}\cC^N(x)\,dx\Big| 
    \les  2^{-N} = o\big(N^{-d/\al}\big)   \,\quad \text{as }N\to\infty.
\end{align*}
To conclude, we only need to show that  
 the second summand in \eqref{equsenzava} is also dominated by the first one.
By the continuity assumption in \eqref{C00}, 
the covariance function
\[
\cC(x) = \E[B_x B_0]
\]
is a continuous function on $\R^d$. Due to the equivalence 
\eqref{additionalcondition}, one has
$
\max_{\substack{\e\le\|x\|\le K}}|\cC(x)|<1
$
for any $0< \eps < K <\infty$.
As a consequence, 
\[
\Big|\int_{\e< |x|\le K}\cC^N(x)\,dx\Big| 
\les  \left(\max_{\substack{\e\le |x|\le K}}|\cC(x)|\right)^N
=  o\big(N^{-d/\al}\big) .
\]

\noi
Hence, \eqref{equsenzava} is proved.  
\hfill $\square$
\end{remark}

Now we give the proof of Proposition \ref{MS_bdd}.  

    \begin{proof}[Proof of Proposition \ref{MS_bdd}]
 The bound \eqref{MSbdd1}  (in $d_{\rm TV}$ and $W_1$)   in (i)  can be found in, e.g., 
 \cite[Theorem 5.1.3]{NP12}.
  It is known in the 
 Malliavin-Stein community that
 the density assumption therein is not really needed.
 For example, 
 one can apply   \cite[Proposition 2.1.1]{GZthesis}  and
 \cite[Proposition 5.1.1]{NP12} to derive  \cite[(5.1.4)]{NP12},
 which is exactly our \eqref{MSbdd1} with $\textup{dist} = d_{\rm TV}$.
 For the bound  \eqref{MSbdd1}  in $W_2$ distance,
 one can start with the inequality $W_2(Y, Z) \leq S(Y)$,
 with $S(Y)$ denoting the Stein's discrepancy; see \cite[Proposition 3.1]{LNP15}. 
 In our setting, $S(Y) =\|  \E[ \langle DY, -DL^{-1}Y \rangle_\fH | Y ] - 1 \|_{L^2(\Omega)}$
 and the expression 
 $\E[ \langle DY, -DL^{-1}Y \rangle_\fH | Y ]$ is known as the 
 Stein's kernel for the law of $Y$;
 see also the discussions in \cite[pages 257-259]{LNP15}.

\medskip

Now let us prove the bound \eqref{MSbdd3}, while  
\eqref{MSbdd2} is a particular case with $Y = Y'$.
Since
\[
Y = \sum_{p=1}^N I_p(f_p) 
\quad
{\rm and}
\quad
Y' = \sum_{q=1}^{N'} I_q(g_q)
\quad\text{with $f_p\in\fH^{\otimes p}_{\rm sym}$
and $g_q\in\fH^{\otimes q}_{\rm sym}$, }
\]
  and $Y, Y'$ are assumed to have variance one, 
we get 
$
\| f_p \|^2_{\fH^{\otimes p}} \leq 1/p!
$
and
$
\| g_q \|^2_{\fH^{\otimes q}} \leq 1/q!.
$

\medskip

Let us first 
express the inner product 
$\langle DY,-DL^{-1}Y' \rangle_{\fH}$ using the product formula (see Lemma \ref{product_f}) and \eqref{MD1b}:

\noi
\begin{align}\label{TPQ}
\begin{aligned}
&\langle DY,-DL^{-1}Y' \rangle_{\fH}
 = \sum_{p=1}^N  \sum_{q=1}^{N'} p   
\langle I_{p-1}(f_p), I_{q-1}(g_q)\rangle_{\fH} \\
&= \sum_{p=1}^N  \sum_{q=1}^{N'} p    \sum_{i\geq 1}
 I_{p-1}(f_p \otimes_1 h_i) I_{q-1}(g_q \otimes_1 h_i)  \\
& = \sum_{p=1}^N  \sum_{q=1}^{N'} p  
\sum_{r=0}^{(p\wedge q)-1} r! \binom{p-1}{r} \binom{q-1}{r} \\
 &\qquad\qquad \times  \sum_{i\geq 1}
  I_{p+q-2r-2}\Big(   
  {\rm sym}\big( [ f_p \otimes_1 h_i] \otimes_r  [ g_q \otimes_1 h_i ]  \big)
  \Big) \\
  &= \sum_{p=1}^N  \sum_{q=1}^{N'} p  
\sum_{r=0}^{(p\wedge q)-1} r! \binom{p-1}{r} \binom{q-1}{r}
  I_{p+q-2r-2}\big(     {\rm sym} ( f_p \otimes_{r+1}  g_q    ) \big)\\
&  =:  \sum_{p=1}^N  \sum_{q=1}^{N'} p  \mathbf{T}_{p,q} ,
  \end{aligned}
\end{align}

\noi
where $\{ h_i:i \geq1\}$ is the orthonormal basis of $\fH$,
and the last line follows simply from the definition of contractions;
see also equation (6.3.2) in \cite[Chapter 6]{NP12}.
Note that for the $\mathbf{T}_{p,q}$, defined in \eqref{TPQ},
its expectation is nonzero only if $p+q-2r-2 =0$,
or equivalently only if $p=q=r+1$
(this nonzero expectation does not play a role
when we take variance in \eqref{MPQ} below).
And when   $p=q=r+1$ does not hold, 
we can deduce from 
\eqref{sym_bdd}
and  \eqref{C2b} that

\noi
\begin{align}\label{TPQ2}
\begin{aligned}
 & \|   {\rm sym} ( f_p \otimes_{r+1}  g_q)   
               \|_{\fH^{\otimes (p+q-2r-2)}}^2\\
&  \leq \langle f_p\otimes_{p-r-1} f_p, g_q\otimes_{q-r-1} g_q \rangle_{\fH^{\otimes 2r+2}} \\
&\leq  \| f_p\otimes_{p-r-1} f_p\|_{\fH^{\otimes 2r+2}}
 \| g_q\otimes_{q-r-1} g_q\|_{\fH^{\otimes 2r+2}}\\
&\leq  \tfrac{1}{p!}\M'_{q} \ind_{\{ p\le q \}}  
+ \,\tfrac{1}{q!} \M_{p}\ind_{\{ p>q \}}
\end{aligned}
\end{align}

\noi
with $\M_p, M'_q$ as in \eqref{def_MP} and \eqref{def_MQ},
where
the  first term comes from the case 
$q \geq  p$
and the second term comes from  the case $p> q$, combined   with the fact that the squared norm of $f$ (resp. of $g$) is less than $1/p!$ (resp. of $1/q!$),  since we are assuming unit variances.
Therefore, it follows from 
Minkowski inequality, orthogonality relation \eqref{chaos_iso},
 \eqref{TPQ}, \eqref{TPQ2}, and \eqref{SeH}
that 
 \begin{align} \label{MPQ}
	\begin{aligned}
		&\sqrt{\Var\big( \langle DY,-DL^{-1}Y' \rangle_{\fH} \big)}
		\leq  \sum_{p=1}^N  \sum_{q=1}^{N'} p  \sqrt{\Var(  \mathbf{T}_{p,q}  ) } \\
		&=\sum_{p=1}^N  \sum_{q=1}^{N'} p
		\sqrt{\sum_{r=0}^{(p\wedge q)-1} \Big[ r! \binom{p-1}{r} \binom{q-1}{r}\Big]^2
		(p+q-2r-2)! }  \\
		&\qquad\qquad \times   
		\sqrt{ \tfrac{1}{p!}\M'_{q} \ind_{\{ p\le q \}}  + \,\tfrac{1}{q!} \M_{p}\ind_{\{ p>q \}} }
		\\
		&\leq  \sum_{p=1}^N  \sum_{q=1}^{N'} p
		\sqrt{3^{p+q-2} (p-1)! (q-1)!} 
		\sqrt{ \tfrac{1}{p!}\M'_{q} \ind_{\{ p\le q \}}  + \,\tfrac{1}{q!} \M_{p}\ind_{\{ p>q \}} }.
	\end{aligned}
\end{align}

\noi
Then, using  $\sum_{1 \leq p \le q} \sqrt{3}^{p-1} =\tfrac{\sqrt{3}^q-1}{\sqrt{3}-1}\leq \sqrt{3}^{q+1}$ and Jensen's inequality,
we can further   get 

\noindent
 \begin{align*}  
	\begin{aligned}
 \sqrt{\Var\big( \langle DY,-DL^{-1}Y' \rangle_{\fH} \big)} 
		&\leq  \sum_{p=1}^N  \sum_{q=1}^{N'} 
		\sqrt{ 3^{q-1} q!\M'_{q} } \sqrt{\,3^{p-1}  \ind_{\{ p\le q \}}}	\\
		&\qquad + \sqrt{N} \sum_{p=1}^N  \sum_{q=1}^{N'} 
		\sqrt{ 3^{p-1} p! \M_{p}}\sqrt{{3^{q-1}} \ind_{\{ q<p \}}} \\
		&\leq  \sum_{q=1}^{N'} 
		3^q\sqrt{ q!\M'_{q} }	 + \sqrt{N}\sum_{p=1}^N  
		3^p \sqrt{p! \M_{p}}\\
        &\le \sqrt{N'}\sum_{q=1}^{N'} 
		3^{2q} q!\M'_{q} 	 + N\sum_{p=1}^N  
		3^{2p} p! \M_{p}.
	\end{aligned}
\end{align*}

\noi
Therefore, the proof is completed. 
\qedhere

\end{proof}

\begin{lemma}
    \label{tech2} 
 Recall from \eqref{wrM} the definition of $w_{r, M}$ and put
 \[
   w_{\infty,M} :=  \sum_{k=R}^{M} k!a_k^2   \int_{\R^d} \cC^k(x)\,dx.
 \]   
 Let $\s_t^2 = \Var(Y_t)$ with $Y_t$ as in \eqref{def_Y} and \eqref{CP0},
 $\s_{t, N}^2 = \Var(Y_{t, N})$ with $Y_{t, N}$ as in \eqref{equ:YtN}. 
 Recall also the three cases {\rm (c1)-(c3)} in Condition \ref{cond7}.
 Then,  the following statements hold:
 \begin{itemize}
 \item[\bf (i)] In case {\rm (c1)}, 
 we can find some sufficiently large $t_0, M > 0$
 and two finite constants $c, c' >0$ such that 
$c t^d \leq \s^2_{t, N} \leq \s_t^2 \leq c' t^d$
for any $t\geq t_0$ and $N\geq M$. In particular, 
$w_{\infty, M}\in(0, \infty).$

 \item[\bf (ii)]  In case {\rm (c2)}, we can 
  find some sufficiently large $t_0> 0$
 and two finite constants $c, c' >0$ such that 
$c t^d w_{t, R} \leq \s^2_{t, N} \leq \s_t^2 \leq c' t^d w_{t, R}$
for any $t\geq t_0$ and $N\geq R$.
The sub-case where $w_{\infty, R} <\infty$ is also covered by the case 
 {\rm (c1)}.

  \item[\bf (iii)]  In case {\rm (c3)} and under the condition \eqref{cond5}
  \textup{(}i.e., $|\cC(x)| \les |x|^{-\dl}$
  for some $\dl > 0$\textup{)}, 
 we can 
  find some sufficiently large $t_0> 0$
 and two finite constants $c, c' >0$ such that 
$c t^d w_{t, M} \leq \s^2_{t, N} \leq \s_t^2 \leq c' t^d w_{t, M}$
for any $t\geq t_0$ and $N\geq M    > \frac{d}{\dl} -1$.
 
 \end{itemize}
%
%
  
\end{lemma}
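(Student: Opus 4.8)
Everything reduces to the substitution $z=x-y$, which rewrites the double integrals of \eqref{Var_Y} as $\int_{(tD)^2}\cC^q(x-y)\,dxdy=\int_{\R^d}\cC^q(z)\,\lambda_t(z)\,dz$ with $\lambda_t(z):=\Vol\big((tD)\cap(tD+z)\big)$. I shall use three elementary facts: (a) $0\le\lambda_t(z)\le t^d\Vol(D)$, and $\lambda_t(z)=0$ for $|z|>2\mathfrak m t$ where $\mathfrak m:=\sup\{|x|:x\in D\}$; (b) by the remark in footnote \ref{ftone} we may assume $tD\supseteq B(0,r_0t)$ for some $r_0>0$, so $\lambda_t(z)\ge c_0 t^d$ for $|z|\le r_0 t$; (c) each summand $a_q^2q!\int_{(tD)^2}\cC^q(x-y)\,dxdy=a_q^2\,\E\big[(\int_{tD}H_q(B_x)\,dx)^2\big]\ge0$, so $N\mapsto\s_{t,N}^2$ is nondecreasing and $\s_{t,N}^2\le\s_t^2$. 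In view of (c), all the two-sided bounds on $\s_{t,N}^2$ follow from an upper bound on $\s_t^2$ together with a lower bound on $\s_{t,M}^2$ for a \emph{single} fixed $M\ge R$, and these are then automatically uniform over $N\ge M$, as required by the statement.

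For the upper bounds I replace $\lambda_t$ by $t^d\Vol(D)$. In case (c1), $|\cC|\le1$ gives $|\cC|^q\le|\cC|^R$, hence $\s_t^2\le t^d\Vol(D)\,\|\cC\|_{L^R(\R^d)}^R\sum_{q\ge R}a_q^2q!\les t^d$; letting $t\to\infty$ (dominated convergence, using $\cC^q\in L^1(\R^d)$ for $q\ge R$ and $\lambda_t(z)/t^d\to\Vol(D)$) identifies the limit, whence $w_{\infty,M}=\lim_t\s_{t,M}^2/(t^d\Vol(D))\in[0,\infty)$. In case (c2), the hypothesis $\cC^R\ge0$ forces $|\cC|^q\le\cC^R$ for $q\ge R$ (if $R$ is even then $\cC^R=|\cC|^R$; if $R$ is odd then $\cC\ge0$), so $\s_t^2\le\big(\sum_{q\ge R}a_q^2q!\big)\int_{\R^d}\cC^R(z)\lambda_t(z)\,dz\les t^d\int_{|z|\le2\mathfrak m t}\cC^R(z)\,dz\asymp t^d w_{2\mathfrak m t,R}\asymp t^d w_{t,R}$, the last step using the regular variation of $r\mapsto w_{r,R}$. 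In case (c3) I split $\s_{t,N}^2=\int_{(tD)^2}K(x-y)\,dxdy+\sum_{q=M+1}^Na_q^2q!\int_{(tD)^2}\cC^q(x-y)\,dxdy$ with $K:=\sum_{q=R}^Ma_q^2q!\,\cC^q$; Condition \ref{cond5} with $M+1>d/\dl$ gives $\cC\in L^{M+1}(\R^d)$, so the tail sum is $\les t^d$ (again via $|\cC|^q\le|\cC|^{M+1}$), while $\int_{(tD)^2}K(x-y)\,dxdy\asymp t^d w_{t,M}$ by Lemma \ref{tech3} applied with this $K$ and $w_t=w_{t,M}$, $D$ a centered ball.

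For the lower bounds I discard the nonnegative summands allowed by (c) and use (b). In case (c1), since $\varphi-\E[\varphi(Z)]$ is not odd there is an even $2k\ge R$ with $a_{2k}\ne0$; then $\s_{t,2k}^2\ge a_{2k}^2(2k)!\int_{\R^d}\cC^{2k}(z)\lambda_t(z)\,dz\ge c_0\,a_{2k}^2(2k)!\,t^d\int_{|z|\le r_0t}\cC^{2k}(z)\,dz$, and as $\cC^{2k}\ge0$ everywhere and $\cC^{2k}\ge\tfrac12$ on a fixed small ball this is $\ges t^d$ for $t$ large; taking $M:=2k$ completes (c1) and shows $w_{\infty,M}>0$. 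In case (c2), keeping only the $q=R$ term, $\s_{t,R}^2=a_R^2R!\int_{\R^d}\cC^R(z)\lambda_t(z)\,dz\ge c_0\,a_R^2R!\,t^d\int_{|z|\le r_0t}\cC^R(z)\,dz=c_0\,t^d\,w_{r_0t,R}\asymp t^d w_{t,R}$, using $\cC^R\ge0$ to restrict the domain and then regular variation. In case (c3), the matching lower bound $\int_{(tD)^2}K(x-y)\,dxdy\ges t^d w_{t,M}$ comes again from Lemma \ref{tech3}; when $w_{\infty,M}=\infty$ the tail $\les t^d=o(t^d w_{t,M})$ is negligible, while when $w_{\infty,M}<\infty$ one has $t^d w_{t,M}\asymp t^d$ and the nonnegative tail only reinforces the lower bound and is absorbed into the upper bound, so $\s_{t,N}^2\asymp t^d w_{t,M}$ in both regimes. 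The only step that is not bookkeeping is Lemma \ref{tech3}: estimating $\int_{(tD)^2}K(x-y)\,dxdy$ by $t^d\int_{|z|\le t}K$ when $K$ need not be integrable is the delicate point, and it is there that the hypotheses ``$D$ a centered ball'' and ``$w_{r,M}$ regularly varying'' are genuinely used.
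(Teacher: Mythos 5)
Your treatment of cases (c1) and (c2) is essentially the paper's argument: the substitution $z=x-y$, the nonnegativity of each chaotic variance (giving monotonicity of $N\mapsto\s_{t,N}^2$), the choice of an even index $2k$ with $a_{2k}\neq 0$ in (c1), the comparison $|\cC|^q\le \cC^R$ in (c2), and Potter's bound to pass from $w_{r_0 t,R}$ or $w_{2\mathfrak m t,R}$ to $w_{t,R}$ all appear in the paper's proof (the paper uses that $D-D$ contains a ball rather than the footnote's claim that $D$ does, but this is immaterial). These parts are correct.

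The gap is in case (c3). Lemma \ref{tech3} is a one-sided estimate: it gives only the \emph{upper} bound $\int_{(tD)^2}K(x-y)\,dx\,dy\les t^d\,w_{t,M}$ (see \eqref{tech3a}), so your claim that ``the matching lower bound comes again from Lemma \ref{tech3}'' is unfounded. The lower bound is genuinely delicate precisely for the reason you flag at the end: $K=\sum_{q=R}^M a_q^2 q!\,\cC^q$ is in general only conditionally integrable (e.g.\ for Berry's model the terms $\cC^2$ and $\cC^3$ oscillate and are not absolutely integrable), so after restricting $\lambda_t(z)\ge c_0 t^d$ to $\{|z|\le r_0 t\}$ you cannot discard the contribution of the annulus $\{r_0 t<|z|\le 2\mathfrak m t\}$, where $K$ has no sign; the same obstruction defeats a dominated-convergence argument even when $w_{\infty,M}<\infty$, since finiteness of $\lim_r w_{r,M}$ does not give $K\in L^1$. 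The paper does not derive the two-sided bound $\s_{t,M}^2\asymp t^d w_{t,M}$ from Lemma \ref{tech3} at all: it invokes the external result \cite[Theorem 1.1]{Mai23} applied to $A_x=\sum_{k=R}^M a_k H_k(B_x)$, which is where the hypotheses that $D$ is a centered ball and that $r\mapsto w_{r,M}$ is regularly varying with a positive limit are actually consumed. To complete your proof you would need either to cite that result or to reproduce its argument for the lower bound.
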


\begin{proof} 

Let us first prove  {\bf (i)}.
It is routine to proceed as follows:

\noi
\begin{align}
\label{routine1}
\begin{aligned}
\frac{\s_t^2}{\Vol(tD)} =&  
\frac{1}{\Vol(tD)}  \sum_{k\geq R} a_k^2 k! \int_{(tD)^2} \cC^k(x-y) dxdy \\
= &  \sum_{k\geq R} a_k^2 k! 
\int_{tD - tD} \cC^k(z)  \frac{ \Vol( tD \cap (tD -z)   ) }{\Vol(tD)}   dz \\
= & \sum_{k\geq R} a_k^2 k! 
\int_{tD - tD} \cC^k(z)  \frac{ \Vol( D \cap (D -\tfrac{z}{t})   ) }{\Vol(D)}   dz \\
\xrightarrow{t\to+\infty}
& \sum_{k\geq R} a_k^2 k! 
\int_{\R^d} \cC^k(z)    dz \in[0,\infty),
\end{aligned}
\end{align}

\noi
 which follows from the dominated convergence theorem 
 with $|\cC(z)| \leq 1$, $\cC\in L^R(\R^d)$ 
 ($R$ being the Hermite rank of $\varphi$), and the fact that 
 $  \Vol( D \cap (D -\tfrac{z}{t})   ) /  \Vol(D)  \to 1$ as $t\to+\infty$.
 In the same way, we obtain for $N\geq M$ that 
 
 \noi
 \begin{align}  \label{route1a}
 \frac{\s_t^2}{\Vol(tD)} \geq \frac{\s_{t, N}^2}{\Vol(tD)} \geq 
  \frac{\s_{t, M}^2}{\Vol(tD)} \xrightarrow{t\to+\infty} 
  \sum_{k= R}^M a_k^2 k! 
\int_{\R^d} \cC^k(z)    dz.
 \end{align}
 
 \noi
Note that 
\begin{align}\label{neg1}
\int_{tD-tD} \cC^k(z)  \frac{ \Vol( tD \cap (tD -z)   ) }{\Vol(tD)}   dz \geq 0
\quad
{\rm and}
\quad
\int_{\R^d} \cC^k(z)    dz \geq 0
\end{align} 
  for any $k\geq R$, since they are  the  variance and the limiting variance
  of $\frac{1}{\sqrt{\Vol(tD)}} \int_{tD} H_k( B_x) dx$.
  Meanwhile,  
 the assumption that $\varphi-\E[\varphi(Z)]$ is not odd implies
that there is some {\it even} integer $q\geq R$ 
such that $a_q\neq 0$, and thus for any $M\geq q$,
we deduce from \eqref{neg1} and \eqref{routine1}
 that 

\noi
\begin{align}
+\infty >  w_{\infty, M} 
=
&\sum_{k=R}^{M}  a_k^2 k! \int_{\R^d} \cC^k(z)dz  
\geq   a_q^2 q!  \int_{\R^d} \cC^q(z)dz > 0.
 \label{routine2}
\end{align}

\noi
In particular, one can find sufficiently large 
$t_0 > 0$ such that $ w_{t, M} \geq  \frac{1}{2} w_{\infty, M}  > 0 $
for $t \geq t_0$.
Therefore, the statement {\bf (i)} follows from 
\eqref{route1a} and \eqref{routine2}.

\medskip

Next, we prove the statement {\bf (ii)}. 
Using \eqref{neg1} and $\cC^R\geq 0$, we can easily 
get 

\noi
\begin{align}\label{routine2b}
w_{\infty, M} 
&\geq  w_{\infty, R} := a_R^2 R! \int_{\R^d} \cC^R(x)dx \in( 0, \infty].
\end{align}

\noi
If $w_{\infty, R}  < \infty$, we are in the case (c1)
with \eqref{routine2} replaced by \eqref{routine2b}
 and the proof is done. 
Then, let us assume $w_{\infty, R}  = \infty$, i.e., $\cC\notin L^R(\R^d)$.
 
  Putting $r_{t} = t \max\{ | x |: x\in D\}$, we obtain
  as in \eqref{routine1} that 
  
  \noi 
  \begin{align}
\tfrac{\s^2_{t, R} }{ \Vol(tD) } 
&  = a^2_R R!  \int_{tD- tD} \cC^R(z) 
   \frac{ \Vol( D \cap (D -\tfrac{z}{t})   ) }{\Vol(D)}  dz \label{expand1} \\
&  \leq  a^2_R R!  \int_{\{|z|\le 2\,r_t\}} \cC^R(z) 
   \frac{ \Vol( D \cap (D -\tfrac{z}{t})   ) }{\Vol(D)}  dz 
   \leq   w_{2r_t, R} \les   w_{t, R},  \label{expand1b} 
  \end{align}
 
 \noi
 where, in the last line,  we used $\cC^R\geq 0$ and 
 the fact that $w_{r, R}$ is regularly varying
 in $r$ (see Lemma \ref{lemP}-(iv)). This gives us 
 $\sigma_{t,R}^2\les   t^d w_{t,R}$.
 Note that the set $D - D$ contains an open ball $\{|z|\le b\}$ and 
 one can find  $t_0 > 0$ sufficiently large such that 
 for any $z\in\{|z|\le b\}$,
 \[
   \frac{ \Vol( D \cap (D -\tfrac{z}{t})   ) }{\Vol(D)} \geq 1/2,
    \,\, \forall t \geq t_0.
 \]

\noi
 Then, it follows from \eqref{expand1}
 that for $t\geq t_0,$

\noi
\begin{align*}
\frac{\s^2_{t, R} }{ \Vol(tD) } 
&\geq \tfrac{1}{2} a^2_R R!  \int_{\{|z|\le b t\}}  \cC^R(z) 
  dz =  \tfrac{1}{2}   w_{bt, R} \ges   w_{t, R},
   \end{align*} 
where, in the last inequality,  we used again   
the fact that $w_{r, R}$ is regularly  varying.
 Thus, we just proved 
 the existence of large $t_0 > 0$ and finite constants $c, c_0 > 0$
to ensure that 
 
 \noi
 \begin{align}   \notag 
      c\,t^d\,w_{t,R}\le  \sigma_{t,R}^2\le c_0\, t^d w_{t,R}
 \end{align}
 for any $t \geq t_0$. 
 Now 
 we write for any $N\geq R$ and for $t \geq t_0$:

 \noi
 \begin{align}
 c\,t^d\,w_{t,R}\le  \sigma_{t,R}^2
 \leq \s^2_{t, N}  
\leq \s^2_{t}  
& = t^d \Vol(D) \sum_{k=R}^\infty a_k^2 k! 
\int_{tD-tD} \cC^k(z)  \frac{ \Vol( tD \cap (tD -z)   ) }{\Vol(tD)}   dz \notag  \\
&\leq t^d \Vol(D) \sum_{k=R}^\infty a_k^2 k!  
 \int_{tD-tD} \cC^R(z) \frac{ \Vol( tD \cap (tD -z)   ) }{\Vol(tD)}  dz  
 \label{expand2}  \\
   &\les t^d    w_{t, R}, 
   \notag
 \end{align}
 where we used $|\cC|\leq 1$ and $\cC^R\geq 0$ in \eqref{expand2},
 and we applied \eqref{expand1b}
 and the fact that $\sum_{k=R}^\infty a_k^2 k! <\infty$
  in the last inequality. 
 Hence,  the result {\bf (ii)} is proved.

 \medskip

Finally, we prove the result {\bf (iii)}.
Since $|\cC(x)| \les |x|^{-\dl}$ for some $\dl > 0$
and  $M +1 > \tfrac{d}{\dl}$, 
we have $\cC\in L^{M+1}(\R^d)$.
As a consequence, 

\noi
\begin{align*} 
\Var\bigg( \sum_{k=M+1}^{\infty}a_q \int_{tD} H_q(B_x)dx \bigg) \les t^d,
\end{align*}

\noi
which can be proved as in \eqref{routine1}.
It remains to show 
\begin{align}\label{abovebdd}
 t^d w_{t, M} \les  \s^2_{t, M} \les  t^d w_{t, M}.
\end{align}
Indeed, applying \cite[Theorem 1.1]{Mai23}
with 
\begin{center}
$A_x = \sum_{k=R}^M a_k H_k(B_x)$
and $w_t=w_{t, M} = \int_{|x| < t} \E[ A_x A_0]dx $,
\end{center}
 yields the above bound \eqref{abovebdd}. 
Hence, the proof of Lemma \ref{tech2} is completed. \qedhere

\end{proof}

\begin{lemma}\label{tech3} 
    
Suppose $K:\R^d\to\R$ is locally integrable and define 
\[
 w_t:=\int_{\{|z|\le t\}}\,K(z)\,dz.
 \]
 Assume that  $w_t: (0,\infty)\to \R$ is  regularly varying
with index $\rho$ at $+\infty$ 
and 
 \begin{align} \label{tech3aa}
\text{the limit  $w_{\infty}:=\lim_{t\rightarrow\infty}w_t$ 
 exists in $(0,\infty].$}
 \end{align}

   \noi
Then,  for any arbitrarily small $\dl > 0$,
 there is a constant $X = X_{K, \dl}$
such that   
  for any $t_2\ge t_1\ge X$,

\noi
\begin{align} \label{tech3a}
\int_{\{ |x| \leq a t_1\} }\int_{\{ | y| \leq a t_2\}} 
K(x-y)\,dx\,dy
\leq  C_{\dl, K} \, t_1^d\,\,w_{t_2} ,
\end{align}

\noi
where the constant $C_{\dl, K, a}$  
depends only on $\delta, K$,  and $a$.

\end{lemma}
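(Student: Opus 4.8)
The plan is to collapse the double integral to a one‑dimensional integral against the cumulative kernel $w_s$, exploiting the monotonicity in $|z|$ of the overlap volume of two balls, and then to estimate that one‑dimensional integral by Karamata's theorem. First I would apply Fubini--Tonelli (legitimate since $K$ is locally integrable and the geometric kernel below is bounded with compact support): with $\rho_{\max}:=a(t_1+t_2)$,
\[
\int_{\{|x|\le at_1\}}\int_{\{|y|\le at_2\}}K(x-y)\,dx\,dy=\int_{\R^d}K(z)\,g(z)\,dz,\qquad
g(z):=\bigl|\{|x|\le at_1\}\cap\{|x-z|\le at_2\}\bigr|.
\]
By rotational symmetry $g(z)=g_0(|z|)$, where $g_0:[0,\infty)\to[0,\infty)$ is Lipschitz and non‑increasing, equals $c_d(at_1)^d$ (the volume of the small ball) on $[0,a(t_2-t_1)]$, vanishes on $[\rho_{\max},\infty)$, and --- writing $x=(x_1,x')$ with $x_1$ along the direction of $z$ and integrating $x_1$ out first --- satisfies $|g_0'(r)|\le c_{d-1}(at_1)^{d-1}$ for a.e.\ $r$, with $c_{d-1}$ the volume of the unit $(d-1)$‑ball. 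Writing $g_0(|z|)=\int_{|z|}^{\rho_{\max}}(-g_0'(s))\,ds$ and interchanging integrals once more gives the key identity
\[
\int_{\R^d}K(z)\,g_0(|z|)\,dz=\int_0^{\rho_{\max}}w_s\,(-g_0'(s))\,ds,
\]
so it remains to bound the right‑hand side by $C\,t_1^d\,w_{t_2}$. I emphasize that this must be estimated through $|w_s|$, the absolute value of the \emph{integral} $\int_{|z|<s}K$ that the hypotheses control, and \emph{not} through $\int_{|z|<s}|K|$: a bound of the latter type is false in general and would discard the cancellation present in $w$.

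Next I would fix a small $\delta>0$ as in the statement and a threshold $X=X_{K,\delta}$ so large that $w$ is positive on $[\tfrac a2 X,\infty)$, that $w_t\ge c_0>0$ for $t\ge X$ (using $w_\infty\in(0,\infty]$), and that Potter's bound (Lemma \ref{lemP}(iv)) and Karamata's theorem (Lemma \ref{lemP}(iii)) apply on $[X,\infty)$; recall also $\rho\ge0$ by Lemma \ref{lemP}(ii). I would split $\int_0^{\rho_{\max}}w_s(-g_0'(s))\,ds$ at $s=\tfrac a2 t_2$ and bound the absolute value of each piece. On $(\tfrac a2 t_2,\rho_{\max}]$ the ratio $s/t_2$ stays in the compact set $[\tfrac a2,2a]$, so $0<w_s\le C_a\,w_{t_2}$ by Potter's bound; since $\int_{(a t_2/2,\rho_{\max}]}(-g_0'(s))\,ds\le g_0(\tfrac a2 t_2)\le c_d(at_1)^d$, this piece is at most $C_a c_d(at_1)^d w_{t_2}\le C\,t_1^d\,w_{t_2}$. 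On $[0,\tfrac a2 t_2)$ the integrand vanishes unless $a(t_2-t_1)<\tfrac a2 t_2$, i.e.\ unless $t_1>\tfrac12 t_2$, so we may assume that; the part over $[0,X)$ contributes at most $\|K\|_{L^1(\{|z|\le X\})}\,c_d(at_1)^d\le C\,t_1^d\le C\,t_1^d\,w_{t_2}$, while on $[X,\tfrac a2 t_2)$ one has $|w_s|=w_s$, and combining $-g_0'(s)\le c_{d-1}(at_1)^{d-1}$ with Karamata's estimate $\int_X^{x}w_s\,ds\sim\tfrac{x\,w_x}{\rho+1}$ and $w_{at_2/2}\le C_a w_{t_2}$ bounds this piece by $c_{d-1}(at_1)^{d-1}\int_X^{at_2/2}w_s\,ds\le C\,t_1^{d-1}t_2\,w_{t_2}\le 2C\,t_1^{d}\,w_{t_2}$, using $t_2<2t_1$. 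Adding the pieces yields the claimed bound with a constant depending only on $\delta$, $K$ and $a$.

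The main obstacle is precisely the regime $t_1\approx t_2$, where $s$ descends to near $0$ within the support of $-g_0'$ and, for a slowly varying $w$ with $w_\infty=+\infty$, the ratio $w_s/w_{t_2}$ need not be bounded --- $w$ can be much larger than $w_{t_2}$ at intermediate scales. This is handled not by a pointwise bound on $w_s$ but by the \emph{averaging} furnished by Karamata's theorem for $\int_X^{\rho_{\max}}w_s\,ds$; the geometric estimate $|g_0'|\le c_{d-1}(at_1)^{d-1}$ then converts the resulting factor $t_2$ into $t_1$ through $t_2<2t_1$. In all our applications $w$ is in fact eventually monotone, so there one may instead use the trivial $\sup_{s\le a t_2/2}|w_s|\le w_{at_2/2}$ in place of the Karamata step. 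The remaining verifications --- monotonicity of $g_0$, the cross‑section bound on $g_0'$, the two Fubini interchanges, $w_{\lambda t}\asymp w_t$ uniformly for $\lambda$ in a compact subset of $(0,\infty)$, and the absorption of $\|K\|_{L^1(\{|z|\le X\})}$ into $w_{t_2}$ --- are routine.
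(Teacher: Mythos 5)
Your proof is correct and follows essentially the same route as the paper's: the same Fubini/layer-cake reduction of the overlap volume $g_0$ to the one-dimensional integral $\int_0^{\rho_{\max}} w_s\,(-g_0'(s))\,ds$, the same Lipschitz and monotonicity facts about $g_0$, and the same regular-variation input. The only (immaterial) difference is the final one-dimensional estimate: the paper treats the regime $t_1\approx t_2$ by integrating Potter's pointwise bound $w_{\ell t_2}/w_{t_2}\les \ell^{\rho-\dl}$ over $\ell\in(0,2a)$ and the regime $t_2\gg t_1$ via the support restriction $\ell\ge a(1-t_1/t_2)$, whereas you split at $s=\tfrac a2 t_2$ and invoke Karamata's theorem for $\int_X^{at_2/2}w_s\,ds$ — two equivalent uses of the same toolbox.
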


\begin{proof} 

Let us first make a few preparation for the proof. 

\medskip

\noi $\bul$ {\bf Preparation.}
Put $D_r = \{|z|\le r\}$ and 
define  
\[
g_{a,  b}(z):={\rm Vol}(D_a\cap(D_b+z)) = \int_{D_a} \ind_{D_b}(x-z) dx.
\]

\noi
It is not difficult to see that $g_{a, b} = g_{b, a}$ is radial, that is, 
$g_{a,  b}(r\theta) = g_{b, a}(r\theta)$ is constant in 
$\theta\in\mathbb{S}^{d-1}$.
To abuse the notation in this proof, we will just write 
\[
g_{a,  b}(r) = g_{a,  b}(z)\,\,\, \text{with $r =|z|$}.
\]

\noi
It is easy to show that  $g_{a, b}$ is Lipschitz continuous.
Suppose $x = (x_1, 0, ..., 0),  y = (y_1, 0, ..., 0), $ 
with $|x|  = x_1$
and $|y| =y_1$,
then, 
\begin{align*}
g_{a, a}(x) - g_{a, a}(y)
&= \int_{D_a}  \big[  \ind_{D_a}( z-x) - \ind_{D_a}( z-y) \big]  dz \\
&= \int_{|\eta_2| \leq a} dz_2...dz_d \int_{|z_1|\leq r}
\Big[ \ind_{\{ |z_1-x_1| \leq  r\} } 
- \ind_{ \{  |z_1-y_1| \leq r \} } \Big] dz_1
\end{align*}

\noi
with $\eta^2_2 = z_2^2 + ... + z_d^2$ 
and $r = \sqrt{a^2 -\eta_2^2}$.
It is easy to see that the inner integral (over $|z_1| \leq r$)
is bounded by $|y_1 - x_1| \leq | x - y|$.
It follows that 

\noi
\begin{align}\label{Lipa}
| g_{a, a}(x) - g_{a, a}(y)| \leq  c_{d-1}\,a^{d-1} | x- y|.
\end{align}

\noi
where $c_{d-1}$ depends only on $d$. For general $a,  b>0$,
we write as in \cite[page 7]{Mai23} that
\begin{align*}
| g_{a, b}(x) - g_{a, b}(y)|
&\leq \int_{\R^d} | \ind_{D_b}(x-z) - \ind_{D_b}(y-z) |^2 dz \\
& = 2 ( g_{b,b}(0) - g_{b, b}(x-y)   ) \le\,c_{d-1}\, b^{d-1} 
| x - y| ,
\end{align*}

\noi
where the last step follows from \eqref{Lipa}.
See, e.g., \cite[Proposition 2.2]{Mai23} and references therein
for results on more general compact subsets 
(instead of just Euclidean balls).
Let us end this preparation with a few more observations. 
Note that $g_{a, b}(r) =g_{a, b}(z) = 0$ for 
$r = |z| \geq 2\max\{a, b\}$,
and by Rademacher's theorem, $g_{a, b}$ 
is almost everywhere differentiable.
Moreover, for almost every $\ell\in\R_+$,
 $g'_{a,b}(\ell) \leq 0$, since $g_{a, b}(r)$ 
 is decreasing in $r$.\footnote{More 
 precisely, $D_a$ and $D_b +z$ are moving further away 
 as $r=|z|$ gets bigger.}
 As a result, we get
 \begin{align} \label{Rad2}
   \begin{aligned}
 g_{a, b}(r) &= \int_r^{2\max\{a, b\}} \psi_{a, b}(\ell) d\ell \\
 \text{with} \,\,\, 0\le \psi_{a, b}(\ell)  & := - g'_{a, b}(\ell) \le\, c_{d-1} \, b^{d-1} 
 \,\,\,\text{for almost every $\ell\in\R_+$} \\
 {\rm and}\quad \psi_{a, b}(\ell) &=0 \quad
 \text{for $\ell < |b-a|$ or $\ell > a+b$,}
 \end{aligned}
  \end{align}

Next, we present the bulk of the proof.

\medskip

\noi  
$\bul$ {\bf Bulk of the proof.} Let us first write for $t_1 < t_2$:
\noi
\begin{align}
     & w_{t_2}^{-1}\,t_1^{-d} 
     \int_{|x| \leq a t_1}\int_{| y| \leq a t_2}K(x-y)\,dx\,dy  \notag  \\
  = &  w_{t_2}^{-1}\,t_1^{-d}
  \int_{\R^d}\,K(z)\,g_{at_1,at_2}(z)\,dz \label{bulk1} \\
  =& w_{t_2}^{-1} \big(\tfrac{t_2}{t_1}\big)^d
  \int_{| z| < 2at_2} K(z) g_{\frac{t_1}{t_2}a, a}(z/t_2) dz     \label{bulk2} \\
    =& w_{t_2}^{-1} \big(\tfrac{t_2}{t_1}\big)^d
  \int_{\R^d} K(z) \int_{| z| /t_2}^{2a} \psi_{\frac{t_1}{t_2}a, a}(\ell) d\ell dz     \label{bulk2b} \\
    = & w_{t_2}^{-1} \big(\tfrac{t_2}{t_1}\big)^d
  \int_0^{2a}\psi_{\frac{t_1}{t_2}a, a}(\ell) 
  \bigg( \int_{|z| \leq \ell t_2} K(z) dz \bigg) d\ell \notag \\
  = &  \big(\tfrac{t_2}{t_1}\big)^d
  \int_0^{2a} \psi_{\frac{t_1}{t_2}a, a}(\ell)  \frac{w_{\ell t_2}   }{w_{t_2}}d\ell  ,  \label{bulk3}
  \end{align}
  
  \noi
 where  we applied the change of variable $z = x-y$ in \eqref{bulk1},
  we used the elementary relation  
 $g_{at_1,at_2}(z) = t_2^d \, g_{a\tfrac{t_1}{t_2},a}(z/t_2)$
 with $g_{\frac{t_1}{t_2}a, a}(z/t_2) =0$ for $\|z\| > 2at_2$
  in \eqref{bulk2},
  and we utilized \eqref{Rad2} in \eqref{bulk2b},
  followed by an application of Fubini's theorem.

It follows from Lemma \ref{lemP}-(i) with    \eqref{tech3aa}
that the index $\rho$ of regular variation of $w$ is nonnegative. Moreover, as a consequence of $w_\infty > 0$, we can find 
some sufficiently large $X_0 > 0$ such that $w_t \geq \frac{1}{2} w_\infty > 0$
for any $t \geq X_0$. 
Thus, by Potter's bound in Lemma \ref{lemP} we have 
that for every $\delta>0$, $\exists$ $X=X_{K,\dl} \geq X_0$ such that
\begin{align} \label{wa1}
	\frac{w_{\ell\,t_2}}{w_{t_2}}
	\leq
	2 \max\{\ell^{\rho-\dl}, \ell^{\rho+\dl}\}
	\quad
	\text{ for $t_2, t_2 \ell \geq X$}.
\end{align}
\noi
On the other hand,   for $t_2 \ell \leq X$ and $t_2 \geq X \geq X_0$
with $t_2 > t_1$
we deduce from local integrability of $K$ that 

\noi
\begin{align} \label{wa2}
	\big| \frac{w_{\ell\,t_2}}{w_{t_2}} \big|
	\leq   \bigg(\int_{\|z\| \leq X} | K(z) | dz \bigg) 
    \frac{1}{ \frac{1}{2}w_{\infty}}
	\les      1\,.
\end{align}
Combining \eqref{wa1}, \eqref{wa2}, \eqref{Rad2}, and \eqref{bulk3}, we obtain
\noi
\begin{align} \label{bulk4}
\begin{aligned}
\bigg| w_{t_2}^{-1}\,t_1^{-d} 
     \int_{|x| \leq a t_1}\int_{| y| \leq a t_2}K(x-y)\,dx\,dy  \bigg|
&\les
 \big(\tfrac{t_2}{t_1}\big)^d \big(\tfrac{t_1}{t_2}\big) ^{d-1}  \int_0^{2a} 
  \big| \frac{w_{\ell t_2}   }{w_{t_2}} \big|    d\ell \\
  & \les 
  \frac{t_2}{t_1} .
  \end{aligned}
\end{align}

On the other hand, in view of \eqref{Rad2}, $ \psi_{\frac{t_1}{t_2}a, a}(\ell)  =0$  
for $\ell \notin [a( 1- \tfrac{t_1}{t_2} ),
a( 1+ \tfrac{t_1}{t_2} )   ]$, so that
for $\ell \in  [a( 1- \tfrac{t_1}{t_2} ),
a( 1+ \tfrac{t_1}{t_2} )   ]$, 
we continue with \eqref{wa1} to write
with $t_2 > t_1$ 

\noi
 \begin{align}  \notag 
  0<   \frac{w_{\ell\,t_2}}{w_{t_2}}
     \les (1- \tfrac{t_1}{t_2} )^{-\dl}
\quad
\text{ for $t_2, t_2 \ell \geq X \geq X_0$}.
   \end{align}

 \noi
 Thus, combining this bound with \eqref{wa2}, \eqref{bulk3} and \eqref{Rad2} we obtain
\begin{align}  \label{bulk5}
	\begin{aligned} 
		\bigg| w_{t_2}^{-1}\,t_1^{-d} 
		\int_{|x| \leq a t_1}\int_{| y| \leq a t_2}
        K(x-y)\,dx\,dy  \bigg|
		&\les 
		\big(\tfrac{t_2}{t_1}\big)^d    (1- \tfrac{t_1}{t_2} )^{-\dl}
		\int_0^{2a} \psi_{\frac{t_1}{t_2}a, a}(\ell)    d\ell \\
		&=  \big(\tfrac{t_2}{t_1}\big)^d    (1- \tfrac{t_1}{t_2} )^{-\dl}
		g_{\frac{t_1}{t_2}a, a}(0)  \\
		&=   (1- \tfrac{t_1}{t_2} )^{-\dl} \Vol(D_a).
	\end{aligned} 
\end{align}
 \noi
  Hence, the desired bound \eqref{tech3a} follows by a combination of \eqref{bulk4} and \eqref{bulk5} (in particular, when $|t_2/t_1|$ is small we have \eqref{bulk4}, when $|t_2/t_1|$ is big we have \eqref{bulk5}).  
\qedhere

\end{proof}

 \begin{lemma}\label{tech4} 
  Recall  from \eqref{def_HT}-\eqref{XIM0} the definitions of $h_t$
  and  $\xi_m$.
Suppose that
Condition \ref{cond5} holds
for some $\dl >0$ and 
Condition \ref{cond7} holds as well. 
Then, for $m > d/\dl$, we have  $   \xi^2_m(t)  \les  t^{-a}$
for some $a > 0$.
For instance, we can pick  
 $a = \min\{ \dl m - d, d, \tfrac{\dl}{2} \}$.

\end{lemma}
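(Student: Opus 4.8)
The plan is to combine the variance lower bound already available with a uniform upper bound on $h_t(k_1,k_2)$. Since Conditions~\ref{cond5} and~\ref{cond7} are in force, Lemma~\ref{tech2} furnishes $t_0>0$ with $\s_t^2\ges t^d$ for $t\ge t_0$, hence $\s_t^4\ges t^{2d}$; in view of the definition \eqref{XIM0} of $\xi_m$ it therefore suffices to establish
\[
h_t(k_1,k_2)\ \les\ t^{2d-a}\qquad\text{uniformly over }k_1,k_2\ge 1\text{ with }k_1+k_2\ge m,
\]
where $a=\min\{\dl m-d,\,d,\,\dl/2\}$ (the positivity of $\dl m-d$ being exactly $m>d/\dl$). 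Fix such a pair, take $t\ge t_0$, set $T:=\textup{diam}(tD)\asymp t$, and write $I_p(T):=\int_{\{|u|\le T\}}|\cC(u)|^{p}\,du$. Two elementary facts will be used: from Condition~\ref{cond5} and $|\cC|\le1$, $I_p(T)\les (1+T)^{(d-\dl p)_+}\log(2+T)$ with implied constant uniform for $p$ in a fixed bounded range; and whenever $p\ge m$ one has $I_p(T)\le\int_{\R^d}|\cC(u)|^{m}\,du<\infty$, a fixed finite constant.

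Next I would bound $h_t(k_1,k_2)$ by collapsing the $4$-cycle in \eqref{def_HT} to a path. Starting from $h_t(k_1,k_2)\le\int_{(tD)^4}|\cC(x-y)|^{k_1}|\cC(z-w)|^{k_1}|\cC(x-z)|^{k_2}|\cC(y-w)|^{k_2}\,dx\,dy\,dz\,dw$, I split the integrand as $F\cdot G$ with $F=|\cC(x-y)|^{k_1}|\cC(x-z)|^{k_2/2}|\cC(y-w)|^{k_2/2}$ and $G=|\cC(z-w)|^{k_1}|\cC(x-z)|^{k_2/2}|\cC(y-w)|^{k_2/2}$; Cauchy--Schwarz, together with the change of variables $(x,y,z,w)\mapsto(z,w,x,y)$ (under which $\cC$ is even and $(tD)^4$ is invariant, so $\int F^2=\int G^2$), gives
\[
h_t(k_1,k_2)\ \le\ \int_{(tD)^4}|\cC(x-y)|^{2k_1}\,|\cC(x-z)|^{k_2}\,|\cC(y-w)|^{k_2}\,dx\,dy\,dz\,dw.
\]
The right side has the structure of a path $z-x-y-w$: integrating out the leaves $z,w$ (each contributing $\le I_{k_2}(T)$, since differences of points of $tD$ have norm $\le T$) and then using $\int_{(tD)^2}|\cC(x-y)|^{2k_1}\,dx\,dy\le\Vol(tD)\,I_{2k_1}(T)$ yields $h_t(k_1,k_2)\le\Vol(D)\,t^{d}\,I_{2k_1}(T)\,I_{k_2}(T)^2$; the pairing $(x,z)$ against $(y,w)$ gives the symmetric bound, so
\[
h_t(k_1,k_2)\ \le\ \Vol(D)\,t^{d}\,\min\big\{\,I_{2k_1}(T)\,I_{k_2}(T)^2,\ \ I_{2k_2}(T)\,I_{k_1}(T)^2\,\big\}.
\]

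The final step feeds in the $I_p$-estimates. The decisive point is that $2\max\{k_1,k_2\}\ge k_1+k_2\ge m$, so $I_{2\max\{k_1,k_2\}}(T)\les1$; assuming WLOG $k_1\le k_2$, this already gives $h_t(k_1,k_2)\les t^{d}\min\{I_{2k_1}(T)I_{k_2}(T)^2,\ I_{k_1}(T)^2\}$. If $\dl k_1>d$ then $I_{k_1}(T)\les1$, hence $h_t\les t^{d}\le t^{2d-a}$ since $a\le d$. Otherwise $k_1$ ranges over the finite set $\{1,\dots,\lfloor d/\dl\rfloor\}$, and $I_p(T)\les(1+T)^{(d-\dl p)_+}$ (up to logs) shows the exponent of $t$ in $t^{d}I_{2k_1}(T)I_{k_2}(T)^2$ is, by distinguishing whether $2\dl k_1$ and $\dl k_2$ exceed $d$ and using $2\dl k_2\ge\dl(k_1+k_2)\ge\dl m$, at most $2d-\min\{\dl m-d,\,2\dl k_1,\,d\}$. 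Since $2\dl k_1\ge2\dl\ge\dl/2$, and since $\dl/2<d$ in the only regime where this matters, absorbing the logarithmic factors leaves $h_t(k_1,k_2)\les t^{2d-a}$ with $a=\min\{\dl m-d,\,d,\,\dl/2\}$, exactly as asserted; the value $\dl/2$ is a convenient uniform choice — staying strictly below $d$ so as to swallow the borderline logarithmic corrections at $\dl k_i=d$ — and is not claimed optimal. I expect the main obstacle to be precisely this uniformity in $(k_1,k_2)$: in the regime where $k_1$ and $k_2$ are both small enough that neither $|\cC|^{k_1}$ nor $|\cC|^{k_2}$ is integrable on $\R^d$, one cannot estimate the edges one at a time and must exploit that the total exponent around the cycle, $2(k_1+k_2)\ge2m$, exceeds $2d/\dl$ (this is what bounds $I_{2\max\{k_1,k_2\}}$ and produces the $\dl m-d$ rate), the remainder being bookkeeping of the finitely many borderline exponents and of harmless logarithmic factors.
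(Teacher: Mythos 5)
Your argument is correct and follows the same overall architecture as the paper's proof: lower-bound $\s_t^4\ges t^{2d}$ via Lemma \ref{tech2} (valid under Conditions \ref{cond5} and \ref{cond7}), collapse the four-cycle in \eqref{def_HT} to a chain that can be integrated out one variable at a time, and finish with a case analysis on the truncated moments $\int_{\{|z|\le ct\}}|\cC(z)|^p\,dz$, landing on the same exponent $a=\min\{\dl m-d,\,d,\,\dl/2\}$. The one genuine difference is the cycle-breaking device. The paper first uses $|\cC|\le1$ to lower the exponents to a pair $(r,m-r)$ summing exactly to $m$ and then applies the pointwise inequality $|\cC^{r}(z-w)\cC^{m-r}(x-z)|\le|\cC^{m}(z-w)|+|\cC^{m}(x-z)|$, which yields a chain with edge exponents $r,m,m-r$ whose middle edge integrates to an $O(1)$ constant. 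You instead apply Cauchy--Schwarz together with the symmetry $(x,y,z,w)\mapsto(z,w,x,y)$ (legitimate since $\cC$ is even and $(tD)^4$ is permutation-invariant), obtaining the chain with exponents $k_2,2k_1,k_2$ and exploiting $2\max\{k_1,k_2\}\ge k_1+k_2\ge m$ to make one edge $O(1)$; this keeps $(k_1,k_2)$ general and so costs a little more bookkeeping at the end. That bookkeeping does close: the only delicate point is whether a logarithmic factor can sit on top of a polynomial exponent that is exactly $2d-a$, and the single candidate configuration ($2\dl k_1=d$ together with $2k_2=m$ and $a=\dl m-d$) is vacuous, since $k_1\le k_2$ and $k_1+k_2\ge m=2k_2$ would force $k_1=k_2=m/2$ and hence $\dl m=d$, contradicting $m>d/\dl$; in every other case where a log appears one has $\dl\le d$, so $a\le\dl/2<d$ and there is strict polynomial slack. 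So the proof is complete, just via a slightly different (Cauchy--Schwarz) route to the same chain estimate.
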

\begin{proof} 

    By definition of $\xi_m(t)$, we have 
  
  \noi
    \begin{align}\label{tech3bdd}
 \xi^2_m(t) = \frac{1}{\s_t^4} \sup_{\substack{k_1, k_2\ge 1 \\
 \,k_1+k_2\geq m}}\,
 \int_{(tD)^4} \cC^{k_1}(x-y) \cC^{k_1}(z-w)
 \cC^{k_2}(x-z)\cC^{k_2}(y-w)\,dxdydzdw . 
    \end{align}
    
\noi

\noi
Note that due to  $|\cC | \leq 1$
and 
$| \cC^{r}(z-w)
 \cC^{m-r}(x-z)| \leq  
| \cC^{m}(z-w)| + 
 |\cC^{m}(x-z)|$,
 we can  bound
\eqref{tech3bdd}  by

  \noi
    \begin{align}        \label{tech4b} 
    \begin{aligned}  
 \xi^2_m(t) 
 &\leq \frac{1}{\s_t^4} \sup_{ 1\leq r \leq m-1}\,
 \int_{(tD)^4} \big| \cC^{r}(x-y) \cC^{r}(z-w)
 \cC^{m-r}(x-z)\cC^{m-r}(y-w ) \big| \,dxdydzdw      \\
 &\leq  \frac{1}{\s_t^4} \sup_{ 1\leq r \leq m-1}\,
 \int_{(tD)^4} \big| \cC^{r}(x-y) \cC^{m}(z-w) \cC^{m-r}(y-w ) \big| 
           \,dxdydzdw       \\
 &\quad
 +  \frac{1}{\s_t^4} \sup_{ 1\leq r \leq m-1}\,
 \int_{(tD)^4} \big| \cC^{r}(x-y)   \cC^{m}(x-z)\cC^{m-r}(y-w ) \big| 
         \,dxdydzdw     \\
 &\leq 2 \frac{\Vol(tD)}{\s_t^4}   
 \bigg( \int_{|z| \leq 2t a_0} | \cC^m(z) | dz \bigg)
\qquad     \qquad (\text{with $a_0 = \max\{ |x-y|: x\in D\}$}) \\
 &\qquad \times
 \sup_{ 1\leq r \leq m-1}\,
 \bigg( \int_{|z|\leq 2ta_0 } \big| \cC^{r}(z) \big| dz \bigg)
 \bigg( \int_{|z|\leq 2ta_0 } \big| \cC^{m-r}(z) \big| dz \bigg),
 \end{aligned}
    \end{align}

\noi
where the last step is obtained by performing integration 
in the order of $dz, dw, dy$, and $dx$.

\medskip

Since $|\cC(z)| \les |z|^{-\dl}$ with $\dl > d/m$,
we have $\cC\in L^m(\R^d)$; while 
we read from Lemma \ref{tech2} that

\noi
\begin{align}      \label{tech4a}
\sigma_t^2 
\ges t^d.
\end{align}

\noi
Next, we estimate the integral  
$\int_{|z|\leq 2t a_0 } \big| \cC^{r}(z) \big| dz$
for $r=1, ... , R-1$ and
for $t > 1$:  we deduce from $|\cC(x)| \les \min\{1,  |x|^{-\dl}\}$
 that

\noi
\begin{align*}    
\begin{aligned}
\int_{|z|\leq 2t a_0 } \big| \cC^{r}(z) \big| dz
&\les 1 + \int_{ a_0 \leq |z|\leq 2t a_0 } \big| \cC^{r}(z) \big| dz 
\les 1 +  \int_{ a_0 \leq |z|\leq 2t a_0 }  |z|^{-\dl r} dz \\
&\les t^{d-\dl r } \ind_{\{ d > \dl r\}} + \ind_{\{d < \dl r\}} 
+   \ind_{\{ d = \dl r\}}   \log t ,
\end{aligned}
\end{align*}

\noi
from which, together with \eqref{tech4a} and \eqref{tech4b},
we can obtain 
\[
\xi^2_m(t) \les t^{-a}
\]
with $a = \min\{ \dl m - d, d, \tfrac{\dl}{2} \} > 0$,
where the number $\tfrac{\dl}{2} $ comes 
from the rough estimate $t^{-\dl r} \log t \les t^{-\dl /2}$
for $1 \leq r \leq m-1$ and $t > 1$.  
Hence, the proof is completed now. 
\qedhere

\end{proof}

\begin{lemma}
    \label{tech5}
  Suppose that $\bfB=( B_x: x\in\R^d)$ is a centered, stationary Gaussian
    random field  with unit variance and radial covariance function
    \begin{align*}
    \cC(x)=c(|x|):=|x|^{-\beta}\,L(|x|) ,
    \end{align*}

    \noi
    where $\be\in (0,d/R)$ and $L:\R_+\to\R$ is slowly varying at infinity. 
    Suppose $\varphi:\R\to\R$ is measurable 
    with $\varphi(B_0)\in L^2(\Omega)$
    such that $\varphi - \E[ \varphi(B_0) ]$ has Hermite rank $R\geq 1$.
    Then, the $R$-th chaotic  component of 
    \[
    Y_t := \int_{t D}  \big( \varphi(B_x) - \E[ \varphi(B_0) ] \big)dx
    \] 
    is dominant as  $t\to+\infty$.
   Moreover,
   $Y_t/\sqrt{\Var(Y_t)}$ does not converge in law to a standard normal
   whenever $R\geq 2$.

\end{lemma}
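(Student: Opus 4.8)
The plan is to work with the Wiener chaos expansion $Y_t = \sum_{q\ge R}\mathbf{J}_{t,q}$ (recall $\E[Y_t]=0$ by stationarity), where $\mathbf{J}_{t,q}=a_q\int_{tD}H_q(B_x)\,dx=I_q(g_{t,q})$ with $g_{t,q}=a_q\int_{tD}e_x^{\otimes q}\,dx$, and to pin down the exact growth of each variance $\sigma_{t,q}^2=\E[\mathbf{J}_{t,q}^2]=a_q^2\,q!\int_{(tD)^2}\cC^q(x-y)\,dx\,dy$. The backbone is a scaling argument: substituting $x=tu,\ y=tv$ gives $\int_{(tD)^2}\cC^q(x-y)\,dx\,dy=t^{2d}\int_{D\times D}\cC^q\!\big(t(u-v)\big)\,du\,dv$, and on the complement of $\{|u-v|\le X/t\}$ one has exactly $\cC^q(t(u-v))=(t|u-v|)^{-q\beta}L(t|u-v|)^q$. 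For $q=R$, since $R\beta<d$ the function $|u-v|^{-R\beta}$ is locally integrable on $\R^d$, so by dominated convergence and Potter's bound (Lemma \ref{lemP}) with an exponent $\eps$ chosen so that $R\beta+R\eps<d$, together with the easy fact that the discarded near-diagonal slab contributes only $O(t^d)$, one obtains
\[
\sigma_{t,R}^2 \sim \kappa_R\, t^{2d-R\beta}L(t)^R,\qquad \kappa_R:=a_R^2\,R!\int_{D\times D}|u-v|^{-R\beta}\,du\,dv\in(0,\infty),
\]
a regularly varying function of index $2d-R\beta>d$.

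For dominance it then remains to bound $\sum_{q>R}\sigma_{t,q}^2$. Fix an integer $q_1$ with $q_1\beta>d$. For the finitely many $q$ with $R<q<q_1$ the same scaling computation gives $\sigma_{t,q}^2\asymp t^{2d-q\beta}L(t)^q=o\big(t^{2d-R\beta}L(t)^R\big)$ (by slow variation, since $q\beta>R\beta$), and the single boundary term $q\beta=d$, if present, is $O(t^d\,\ell(t))=o(\sigma_{t,R}^2)$ for some slowly varying $\ell$; for $q\ge q_1$, using $|\cC|\le1$ we get $\int_{\R^d}|\cC^q|\le\int_{\R^d}|\cC^{q_1}|<\infty$, whence $\sum_{q\ge q_1}\sigma_{t,q}^2\le \Vol(tD)\big(\int_{\R^d}|\cC^{q_1}|\big)\sum_{q\ge R}a_q^2 q!=O(t^d)$. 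Summing, $\sigma_t^2=\sigma_{t,R}^2(1+o(1))<\infty$, i.e. the $R$-th chaos is dominant.

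For the non-Gaussianity, note that $F_t:=\mathbf{J}_{t,R}/\sigma_{t,R}=I_R(\bar g_{t,R})$ with $\bar g_{t,R}:=g_{t,R}/\sigma_{t,R}$ lies in the fixed chaos $\C_R$ with $R\ge2$ and $\E[F_t^2]=1$; so by the fourth moment theorem (Theorem \ref{FMT_NP}, applied along sequences $t_k\to\infty$) it suffices to exhibit some $r\in\{1,\dots,R-1\}$ with $\|\bar g_{t,R}\otimes_r\bar g_{t,R}\|\not\to0$. Take $r=1$: by \eqref{GTQ_r2},
\[
\|g_{t,R}\otimes_1 g_{t,R}\|^2=a_R^4\int_{(tD)^4}\cC(x-y)\,\cC(z-w)\,\cC^{R-1}(x-z)\,\cC^{R-1}(y-w)\,dx\,dy\,dz\,dw,
\]
and rescaling by $t$ exactly as above yields $\|g_{t,R}\otimes_1 g_{t,R}\|^2\sim c\, t^{4d-2R\beta}L(t)^{2R}$, where $c>0$ is an absolutely convergent integral over $D^4$ of the product of the four singular powers; both the convergence of that integral and the fact that the regions where one of the four differences is $O(1/t)$ contribute only strictly lower-order terms use $R\beta<d$ in an essential way. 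Since $\sigma_{t,R}^4\sim\kappa_R^2\,t^{4d-2R\beta}L(t)^{2R}$, we get $\|\bar g_{t,R}\otimes_1\bar g_{t,R}\|^2\to c/\kappa_R^2>0$, so condition (iii) of Theorem \ref{FMT_NP} fails and $F_t$ does not converge in law to $\NN(0,1)$. By the dominance just proved, $Y_t/\sqrt{\Var(Y_t)}=(\sigma_{t,R}/\sigma_t)F_t+(\text{tail})/\sigma_t=(1+o(1))F_t+o_{L^2}(1)$ has the same distributional limits as $F_t$, hence is not asymptotically Gaussian. (Alternatively, the classical Dobrushin–Major computation identifies the limit of $F_t$ as a non-Gaussian Hermite random variable of order $R$; see \cite{DM79}.)

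The main obstacle is the regular-variation bookkeeping — above all, verifying in the four-fold contraction integral that the near-diagonal regions (where some difference is $O(1/t)$) are of strictly lower order, which is precisely where $\beta<d/R$ is used; everything else is a routine change of variables combined with the Karamata/Potter estimates already recorded in Section \ref{SEC_RV}.
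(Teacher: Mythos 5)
Your argument is correct and, for the harder half (non-Gaussianity), it is essentially the proof in the paper: normalize by the fourth moment theorem, rescale the four-fold contraction integral by $t$, use Potter's bound to compare $c(t|x-y|)/c(t)$ with pure power singularities, and check separately that the near-diagonal regions (where some difference is $\le X/t$) contribute strictly lower order while the far-from-diagonal part converges to a positive constant. The only genuine divergence is in the dominance step: the paper simply invokes the reduction principle of Theorem \ref{recap}-(ii) (noting $\cC\in L^M\setminus L^R$ and $\cC^R\ge0$ at infinity since $\cC$ is regularly varying) together with Karamata to get $\s_{t,R}^2\asymp t^{2d}c^R(t)$, whereas you reprove dominance from scratch by a chaos-by-chaos variance estimate (Karamata for the finitely many $q$ with $q\beta\le d$, absolute integrability of $\cC^{q}$ for $q\beta>d$); this makes the lemma self-contained at the cost of a little extra bookkeeping, and also yields the sharp constant $\kappa_R$ where the paper only needs the order. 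A second cosmetic difference: you fix $r=1$ and prove a two-sided asymptotic $\|\bar g_{t,R}\otimes_1\bar g_{t,R}\|^2\to c/\kappa_R^2>0$, while the paper keeps $r$ general and only establishes a lower bound on $\mathcal{D}_{X/t}$ plus vanishing of the complement --- logically equivalent for contradicting condition (iii) of Theorem \ref{FMT_NP}.
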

\begin{proof}
	First of all, since $\cC\in L^M\setminus L^R$ 
    for some $M$ and $\cC^R(x)\ge0$ for $x\in[X,\infty)$, 
    $X>0$ large enough (indeed $\cC$ is regularly varying), 
    by Theorem \ref{recap}-(ii) the $R$-th chaotic component is dominating, i.e., $\s_t^2\sim \s_{t,R}^2$ 
    and then  $Y_t$ is asymptotically $L^2$ equivalent
    to its $R$-th chaotic component. 
    Moreover, by Lemma \ref{lemP}-(iii), 
    since $\cC^R(x)=L^R(|x|)|x|^{-R\beta}$ is regularly varying with index $-R\beta\in(-d,0)$, we have (see, e.g., \cite[(9),(10) and Remark 2.7]{Mai23})
	\[
	\s_{t,R}^2\,\asymp\,t^{2d}c^R(t)\,.
	\]

\noi
Finally,  let us prove that  $Y_t /\sqrt{\Var(Y_t)}$ can not have a Gaussian limit when  $R \geq 2$.
Using the asymptotic $L^2$-equivalence, it suffices to show 
that the $R$-th chaotic component does not admit Gaussian fluctuations
as $t\to+\infty$. It follows from Nualart-Peccati's fourth moment theorem
(Theorem \ref{FMT_NP})
that it is enough to show that for some $r\in\{1, ..., R-1\}$,

\noi
\begin{center}
	$\frac{h_t\left(r,R-r\right)}{\s_t^4}$ does not converge to zero as $t\to\infty$;
\end{center}
see also \eqref{GTQ_r2}.
Now we can deduce from  $\s_t^2 \sim \s_{t, R}^2 \asymp  t^{2d} c^R(t)$
that

\noi
    \begin{align*}
	\frac{h_t(r,R-r )}{\s_t^4}
	&\asymp 
	\int_{D^4}
	\frac{c^r(t|x-y|)}{c^r(t)} 
	\frac{c^r(t|z-w|)}{c^r(t)}
	\frac{c^{R-r}(t|x-z|)}{c^{R-r}(t)}
	\frac{c^{R-r}(t|y-w|)}{c^{R-r}(t)}\,dxdydzdw \\
	&=\int_{D^4\setminus \mathcal{D}_\epsilon}
	\frac{c^r(t|x-y|)}{c^r(t)} 
	\frac{c^r(t|z-w|)}{c^r(t)}
	\frac{c^{R-r}(t|x-z|)}{c^{R-r}(t)}
	\frac{c^{R-r}(t|y-w|)}{c^{R-r}(t)}\,dxdydzdw\\
	&\quad +\int_{\mathcal{D}_\epsilon}
	\frac{c^r(t|x-y|)}{c^r(t)} 
	\frac{c^r(t|z-w|)}{c^r(t)}
	\frac{c^{R-r}(t|x-z|)}{c^{R-r}(t)}
	\frac{c^{R-r}(t|y-w|)}{c^{R-r}(t)}\,dxdydzdw,
\end{align*}

\noi
where $\mathcal{D}_\epsilon = \{ (x, y, z, w)\in D^4: |x-y|, |z-w|, |x-z|, |y-w| \ge \epsilon\}$, 
where $\epsilon>0$ is chosen small enough 
so that ${\rm Vol}(\mathcal{D}_{\epsilon})>0$.\footnote{This is possible.
 Indeed, ${\rm Vol}(\mathcal{D}_{0})={\rm Vol}(D)^4>0$ (since ${\rm Vol}(D)>0$ by assumption) 
 and it is easy to see that $\epsilon\mapsto {\rm Vol}(\mathcal{D}_{\epsilon})$ is continuous in $0$.}
Then, using Potter's bound from Lemma \ref{lemP}, we can write, 
with $A=2$ and $\dl \in( 0, \be)$ small enough (so that $\beta+\dl<d$),
that

 \noi
\begin{align}
	&\label{lowerpotter}
	\frac{c(t|x-y|)}{c(t)} \ge \frac{1}{2} \min\{ |x-y|^{- \be + \dl},  |x-y|^{- \be - \dl} \}
	\ges    |x-y|^{-\beta+\dl} \,\,\,,\\
	&\label{upperpotter}
	\frac{c(t|x-y|)}{c(t)} \le 2 \max\{ |x-y|^{- \be + \dl},  |x-y|^{- \be - \dl} \}
	  \les  |x-y|^{-\beta-\dl}\,\,\,,
\end{align}
for $X/t  \le |x-y| \leq {\rm diam}(D)=\max\{ |x-y|\,:\,x,y\in D\}<\infty$, 
where $X=X_{c,\delta,2}>0$ is given by Potter's bound.
Thus, we can deduce from \eqref{lowerpotter} with $\epsilon=X/t$
that  as $t\to\infty$, 

\noi
\begin{align*}
&\int_{\mathcal{D}_{X/t}}
	\frac{c^r(t|x-y|)}{c^r(t)} 
	\frac{c^r(t|z-w|)}{c^r(t)}
	\frac{c^{R-r}(t|x-z|)}{c^{R-r}(t)}
	\frac{c^{R-r}(t|y-w|)}{c^{R-r}(t)}\,dxdydzdw
	\\
	&\ges \int_{\mathcal{D}_{X/t}}
	|x-y|^{-r(\beta-\dl)}|z-w|^{-r(\beta-\dl)}|x-z|^{-(q-r)(\beta-\dl)}|y-w|^{-(q-r)(\beta-\dl)}dxdydzdw\\
	&\rightarrow \int_{D^4}
	|x-y|^{-r(\beta-\dl)}|z-w|^{-r(\beta-\dl)}|x-z|^{-(q-r)(\beta-\dl)}|y-w|^{-(q-r)(\beta-\dl)}dxdydzdw
	>0,
\end{align*}
where the last convergence result follows from monotone convergence theorem. 
To conclude the proof, we need to show that as $t\to\infty$,

\noi
\begin{align}
	\label{finalstep}
	\int_{D^4\setminus \mathcal{D}_{X/t}}
	\frac{c^r(t|x-y|)}{c^r(t)} 
	\frac{c^r(t|z-w|)}{c^r(t)}
	\frac{c^{R-r}(t|x-z|)}{c^{R-r}(t)}
	\frac{c^{R-r}(t|y-w|)}{c^{R-r}(t)}\,dxdydzdw\rightarrow 0\,.
\end{align}

\noi
To do this, we further decompose the above integration domain
$
D^4\setminus \mathcal{D}_{X/t}
$
and it suffices to estimate the following integrals:

\noi
\begin{align}
	&\int_{\substack{(x,y,z,w)\in D^4\\|x-y|,|z-w| < X/t \\ |x-z|,|y-w|< X/t}}
	\frac{c^r(t|x-y|)}{c^r(t)} 
	\frac{c^r(t|z-w|)}{c^r(t)}
	\frac{c^{R-r}(t|x-z|)}{c^{R-r}(t)}
	\frac{c^{R-r}(t|y-w|)}{c^{R-r}(t)}\,dxdydzdw \label{term1}\\
&\int_{\substack{(x,y,z,w)\in D^4\\|x-y|,|z-w|,|x-z|< X/t\\
			|y-w|\ge X/t}}
	           \qquad \& \qquad
	           \int_{\substack{(x,y,z,w)\in D^4\\|x-y|,|z-w|< X/t\\
			|x-z|,|y-w|\ge X/t}}
	                \qquad \& \qquad
	                \int_{\substack{(x,y,z,w)\in D^4\\|x-y|,|x-z|< X/t\\
			|z-w|,|y-w|\ge X/t}}
	      \label{term4}\\
 &\quad\qquad \qquad\qquad\qquad\qquad  \& \qquad  \int_{\substack{(x,y,z,w)\in D^4\\|x-y|< X/t\\
			|z-w|,|x-z|,|y-w|\ge X/t}}
	 \label{term5}\,.
\end{align}

Let us first consider the last integral  \eqref{term5}.
Using   $|c(t|x-y|)|\le1$ and   \eqref{upperpotter} with
$0< \dl < \frac{d}{R} - \be$,
we have  

\noi
\begin{align} \label{forYoung}
\text{$z\in\R^d \mapsto f(z):=|z|^{-(\beta+\dl)}\ind_{D-D}(z)$ in $L^r(\R^d)$ for $1\leq r\leq R$},
\end{align}

\noi
and thus,  we obtain 

\noi
\begin{align}
	\eqref{term5}
	&\le \frac{1}{c^r(t)}\int_{\substack{(x,y,z,w)\in D^4\\|x-y|< X/t}}
	|z-w|^{-r(\beta+\dl)}|x-z|^{-(R-r)(\beta+\dl)}|y-w|^{-(R-r)(\beta+\dl)}\,dxdydzdw \nonumber\\
	& = \frac{1}{c^r(t)}\int_{\substack{(x,y,z,w)\in D^4\\|x-y|< X/t}}
	f^r(z-w)f^{R-r}(x-z)f^{R-r}(y-w)\,dxdydzdw  \nonumber\\
	&\leq   \frac{\Vol(D)}{c^r(t)}\int_{\R^{3d}}f^r(v)f^{R-r}(a+u - v) f^{R-r}(u)  \ind_{\{ |a| < X/t \}}dadudv  \nonumber  \\
	&=  \frac{\Vol(D)}{c^r(t)}\int_{D-D}  \big( f^r \ast f^{R-r} \ast  f^{R-r} \big)(a) \ind_{\{ |a| < X/t \}}da,
    \label{changeofvariable}
\end{align}

\noi
where   $\ast$ denotes  the convolution in $ \R^d$.
Therefore,   using H\"older's and  Young's convolution inequalities
with \eqref{forYoung},
we have

\noi
\begin{align*}
\eqref{term5} 
&\les  \frac{1}{c^r(t)\,t^d}  \big\| f^r\ast f^{R-r}\ast f^{R-r} \big\|_{L^\infty(\R^d)} 
\leq \frac{1}{c^r(t)\,t^d} \| f^{R-r}  \|_{L^{\frac{R}{R-r}}(\R^d)}^2  
             \cdot  \| f^r  \|_{L^{\frac{R}{2r}}(\R^d)}    \\
&=\frac{1}{c^r(t)\,t^d}    \| f  \|_{L^{R}(\R^d)}^{2(R-r)}  
             \cdot  \| f  \|_{L^{\frac{R}{2}}(\R^d)}^r 
             \les \frac{1}{c^r(t)\,t^d}\,\xrightarrow{t\to\infty} 0\,,
\end{align*}

\noi
where the last 
step follows by Lemma \ref{lemP}-(i) with $c^r(t) = |t|^{-\be r} L^r(t) \ges  |t|^{-\be r - \eps_0}$
for $\eps_0 > 0$ small enough.

Regarding the other  integrals in \eqref{term1}-\eqref{term4}, 
we note that at least two among the four numbers $|x-y|,|z-w|,|x-z|, |z-w|$ 
are less than $X/t$ in the description of the integration domains. 
Thus, we can argue as in \eqref{changeofvariable} to bound 
these integrals by 
\begin{align*}
\les \frac{1}{c^{2R}(t)}\,\int_{\substack{(a,u,v)\in (D-D)^3\\|a|<X/t\,,\,|u|<X/t}}da du dv\les \frac{1}{t^{2d}\,c^{2R}(t)}\rightarrow 0\,,
\end{align*}
where the last limit follows again by Lemma \ref{lemP}-(i). 
Hence, \eqref{finalstep} is verified and we can conclude    our proof now.
\qedhere

\end{proof}

%
%
%
%
%

\end{document}